\newcommand{\msc}[2][2000]{%
  \let\@oldtitle\@title%
  \gdef\@title{\@oldtitle\footnotetext{#1 \emph{Mathematics subject
        classification.} #2}}% 
}
\theoremstyle{plain}
\newtheorem{theorem}{Theorem}[section]
\newtheorem{lemma}[theorem]{Lemma}
\newtheorem{corollary}[theorem]{Corollary}
\newtheorem{proposition}[theorem]{Proposition}
\theoremstyle{remark}
\newtheorem{remark}[theorem]{Remark}
\newtheorem{example}[theorem]{Example}
\def\C{{\mathbb C}}% complex numbers
\def\R{{\mathbb R}}% real numbers
\def\N{{\mathbb N}}% nonnegative integers
\def\H{{\mathcal H}}
\def\({\left(}
\def\){\right)}
\def\<{\left\langle}
\def\>{\right\rangle}
\def\le{\leqslant}
\def\ge{\geqslant}
\def\d{{\partial}}
\def\eps{\varepsilon}
\def\si{{\sigma}}
\def\1{{\mathbf 1}}
\def\w{{\tt w}}
\DeclareMathOperator{\RE}{Re}
\DeclareMathOperator{\IM}{Im}
\DeclareMathOperator{\DIV}{div}
\def\Dt{{\Delta t}}
\def\E{{\mathcal E}}
\def\L{\mathcal L}
\numberwithin{equation}{section}
\newcommand{\mynegspace}{\hspace{-0.12em}}
\newcommand{\vvvert}{\rvert\mynegspace\rvert\mynegspace\rvert}
\begin{document}

\title[Splitting for NLS in the semi-classical limit]{On Fourier
  time-splitting methods for nonlinear Schr\"odinger equations
  in the semi-classical limit~II. Analytic regularity} 

\author[R. Carles]{R\'emi Carles}
\author[C. Gallo]{Cl\'ement Gallo}
\address{CNRS \& Univ. Montpellier\\Institut Montpelli\'erain
  Alexander Grothendieck
\\CC51\\Place E. Bataillon\\34095 Montpellier\\ France}
\email{Remi.Carles@math.cnrs.fr}
\email{Clement.Gallo@umontpellier.fr}

\begin{abstract}
  We consider the time discretization based on Lie-Trotter splitting, for the
  nonlinear Schr\"odinger equation, in the semi-classical limit, with
  initial data under the form of WKB states. We show that both the
  exact and the numerical solutions keep a WKB structure, on a time
  interval independent of the Planck constant. We prove error
  estimates, which show that the quadratic observables can be computed
  with a time step independent of the Planck constant. The functional
  framework is based on time-dependent analytic spaces, in order to
  overcome a previously encountered loss of regularity phenomenon. 
\end{abstract}
\thanks{This work was supported  by the French ANR projects
  SchEq (ANR-12-JS01-0005-01) and BECASIM
  (ANR-12-MONU-0007-04)}  
\maketitle

\section{Introduction}
\label{sec:intro}

This paper is devoted to the analysis of the numerical approximation
of the solution to 
\begin{equation}
  \label{eq:nls}
  i\eps\d_t u^\eps +\frac{\eps^2}{2}\Delta u^\eps = \lambda
  |u^\eps|^{2\si} u^\eps,\quad (t,x)\in [0,T]\times \R^d,
\end{equation}
in the semi-classical limit $\eps \to 0$.  The nonlinearity is smooth
and real-valued: $\lambda\in \R$ and $\si\in \N$. The initial data that we
consider are BKW states:
\begin{equation}
  \label{eq:ci-u}
  u^\eps(0,x)=a_0(x)e^{i\phi_0(x)/\eps}=:u_0^\eps(x),
\end{equation}
where $\phi_0:\R^d\to \R$ is a real-valued phase, and $a_0:\R^d\to \C$
is a possibly complex-valued amplitude. An important feature of such
initial data is that in the context of the semi-classical limit for
\eqref{eq:nls}, they yield solution which are in $L^\infty(\R^d)$
uniformly in $\eps$, at least on some time interval $[0,T]$ for some
$T>0$ independent of $\eps$. Also, not that even if $\phi_0=0$ (no
rapid oscillation initially), for $\tau>0$ arbitrarily small and independent of $\eps$, $u^\eps(\tau)$ takes the form of a WKB state as in
\eqref{eq:ci-u} with amplitude and phase solving \eqref{eq:syst-grenier}--\eqref{eq:ci} below (see \cite{CaBook}). Note that even if $\phi_0=0$, the coupling shows that $\phi^\eps$ becomes non-trivial
instantaneously.
\smallbreak

We consider more precisely the time discretization for \eqref{eq:nls}
based on Fourier time splitting.  We denote by $X^t_\eps$ the map
$v^\eps(0,\cdot)\mapsto v^\eps(t,\cdot)$, where
\begin{equation}\label{eq:linear}
  i\eps\d_t v^\eps +\frac{\eps^2}{2}\Delta v^\eps=0. 
\end{equation}
We also denote by $Y^t_\eps$ the map
$w^\eps(0,\cdot)\mapsto w^\eps(t,\cdot)$, where
\begin{equation}\label{eq:Y}
  i\eps\d_t w^\eps = \lambda |w^\eps|^{2\si}w^\eps.
\end{equation}
We consider the Lie-Trotter type splitting operator
\begin{equation}\label{eq:Z}
  Z_{\eps}^t =Y^t_\eps X^t_\eps .
\end{equation}
The Lie-Trotter operator $X^t_\eps Y^t_\eps $ could be handled in the
same fashion. The advantage of splitting methods is that they involve
sub-equations which are simpler to solve than the initial equation. In
our case, \eqref{eq:linear} is solved explicitly by using the Fourier
transform, defined by 
 \begin{equation*}
    \widehat \psi(\xi) = \frac{1}{(2\pi)^{d/2}}\int_{\R^d} e^{-ix\cdot
      \xi}\psi(x)dx, 
  \end{equation*}
since it becomes an ordinary differential equation
\begin{equation}\label{eq:X}
  i\eps \d_t \widehat v^\eps -\frac{\eps^2}{2}|\xi|^2 \widehat v^\eps =0,
\end{equation}
hence
\begin{equation*}
  \widehat {X^t_\eps v}(\xi)=e^{-i\eps \frac{t}{2}|\xi|^2}\widehat v(\xi). 
\end{equation*}
Also, since $\lambda\in \R$, in \eqref{eq:Y} the modulus of $w^\eps$
does not depend on time, hence
\begin{equation}\label{eq:Yexpl}
  Y^t_\eps w(x) = w(x) e^{-i\lambda \frac{t}{\eps} |w(x)|^{2\si}}.
\end{equation}
In the case $\eps=1$, several results exist to prove that the
Lie-Trotter time splitting is of order one, and the Strang splitting 
of order two (\cite{BBD,Lu08}). The drawback of these proofs is
that they rely on uniform Sobolev bounds for the exact solution, of
the form $u\in L^\infty([0,T];H^s(\R^d))$, for $s\ge 2$. However, in
the framework of \eqref{eq:nls}, these norms are not uniformly
bounded as $\eps\to 0$, in the sense that we rather have $\|u^\eps(t)\|_{H^s}\approx\eps^{-s}$, due to the oscillatory nature of $u^\eps$. 
\smallbreak

In the case of a linear potential ($|u^\eps|^{2\si}$ is replaced by a
known function of $x$ in \eqref{eq:nls}), error estimates are given in
\cite{BJM1}; see also \cite{DeTh10,DeTh13}. In the nonlinear case,
error estimates are established in \cite{CaSINUM}, but for other nonlinearities than in \eqref{eq:nls}--\eqref{eq:ci-u}. The proof there requires either to consider a weakly nonlinear regime, that is \eqref{eq:nls} is replaced by 
\begin{equation*}
 i\eps\d_t u^\eps +\frac{\eps^2}{2}\Delta u^\eps = \eps\lambda
  |u^\eps|^{2\si} u^\eps,\quad (t,x)\in [0,T]\times \R^d,
\end{equation*}
with the same initial data \eqref{eq:ci-u}, or to replace the
nonlinearity in \eqref{eq:nls} with a smoothing nonlinearity of Poisson
type. We recall in Section~\ref{sec:overview} why these assumptions
are made in \cite{CaSINUM}. The goal of this paper is to prove error
estimates which are similar to those established in \cite{CaSINUM}, but
for \eqref{eq:nls}--\eqref{eq:ci-u}. Before stating our main result,
we introduce a few notations. The Fourier transform is normalized as
\begin{equation*}
  \hat f(\xi) = \frac{1}{(2\pi)^{d/2}}\int_{\R^d} e^{-ix\cdot \xi}f(x)dx.
\end{equation*}
A tempered distribution $f$ is in $H^s(\R^d)$ if $\xi\mapsto \<\xi\>^s\hat f(\xi)$
  belongs to $L^2(\R^d)$, where 
  \begin{equation*}
    \<\xi\>=\sqrt{1+|\xi|^2}. 
  \end{equation*}
\begin{theorem}\label{theo:main}
 Suppose that $d,\si\in \N$, $d,\si\ge 1$,  and $\lambda\in \R$. Let
  $\phi_0,a_0$ such that
  \begin{equation*}
    \int_{\R^d} e^{\<\xi\>^{1+\delta}}\(|\hat \phi_0(\xi)|^2 + |\hat
      a_0(\xi)|^2\)d\xi<\infty, 
  \end{equation*}
for some $\delta>0$, and $u_0^\eps$ given by \eqref{eq:ci-u}. There
exist $T,\eps_0,c_0>0$ and $(C_k)_{k\in\N}$ such that for all $\eps\in
(0,\eps_0]$, the following holds:\\ 
$1.$ \eqref{eq:nls}-\eqref{eq:ci-u} has a unique solution $u^\eps
=S^t_\eps u_0^\eps\in C([0,T],H^\infty)$, where $H^\infty=\cap_{s\in
  \R}H^s$. Moreover, there exist 
$\phi^\eps$ and $a^\eps$  with, for all $k\in \N$,  
\begin{equation*}
  \sup_{t\in [0,T]}\(\|a^\eps(t)\|_{H^k(\R^d)}+\|
  \phi^\eps(t)\|_{H^k(\R^d)}\)\le 
  C_k,%\quad \forall 
  %\eps\in (0,\eps_0],
\end{equation*}
such that $u^\eps(t,x)=a^\eps(t,x)e^{i\phi^\eps(t,x)/\eps}$ for all
$(t,x)\in [0,T]\times \R^d$.\\
$2.$ For all $\Dt\in (0,c_0]$, for all $n\in \N$ such that $t_n=n\Dt\in
  [0,T]$, there exist $\phi_n^\eps$ and $a_n^\eps$ with, for all $k\in \N$,
\begin{equation*}
  \|a_n^\eps\|_{H^k(\R^d)}+\|\phi^\eps_n\|_{H^{k}(\R^d)}
\le C_k,%\quad \forall
  %\eps\in (0,\eps_0],
\end{equation*}
such that $(Z_\eps^\Dt)^n \(a_0 e^{i\phi_0/\eps}\) = a_n^\eps
e^{i\phi_n/\eps}$.\\
$3.$ For all $\Dt\in (0,c_0]$, for all $n\in \N$ such that $n\Dt\in
  [0,T]$, the following  error estimate holds:
  \begin{equation*}
 \left\| a_n^\eps - a^\eps(t_n)\right\|_{H^{k}}+ \left\| \phi_n^\eps -
   \phi^\eps(t_n)\right\|_{H^{k}}\le C_k \Dt .
 \end{equation*}
\end{theorem}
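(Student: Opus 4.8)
The plan is to work entirely at the level of the WKB pair $(a^\eps,\phi^\eps)$, rather than the oscillatory unknown $u^\eps$, exploiting Grenier's formulation of \eqref{eq:nls}: one writes $u^\eps = a^\eps e^{i\phi^\eps/\eps}$ and obtains a system of the schematic form $\d_t\phi^\eps + \tfrac12|\nabla\phi^\eps|^2 + \lambda|a^\eps|^{2\si}=0$, $\d_t a^\eps + \nabla\phi^\eps\cdot\nabla a^\eps + \tfrac12 a^\eps\Delta\phi^\eps = i\tfrac{\eps}{2}\Delta a^\eps$, with the crucial point that the $\eps$-dependent term carries a small parameter and is skew-symmetric, so energy estimates are uniform in $\eps$. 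The analytic hypothesis on $(\phi_0,a_0)$ — finiteness of $\int e^{\<\xi\>^{1+\delta}}(|\hat\phi_0|^2+|\hat a_0|^2)\,d\xi$ — is exactly what is needed to run a Cauchy–Kovalevskaya–type argument in a scale of Banach spaces $H_w$ defined by a time-decreasing analytic weight $w(t,\xi)$, chosen so that the loss of one derivative in $\nabla\phi^\eps\cdot\nabla a^\eps$ (and the half-derivative-type loss that defeated the Sobolev approach in the prequel) is absorbed by the shrinking of the analyticity radius. This yields Parts $1$ and $2$ simultaneously: the same fixed-point/energy scheme applies to the exact flow $S^t_\eps$ and, after rewriting $Y^\Dt_\eps X^\Dt_\eps$ in WKB variables via \eqref{eq:Yexpl} and the explicit symbol $e^{-i\eps t|\xi|^2/2}$, to the numerical flow $Z^\Dt_\eps$, producing bounds $C_k$ uniform in $\eps$ and in $n$ with $n\Dt\le T$; these should be recalled from the earlier sections of the paper as already established.

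For Part $3$, the heart of the matter, I would run a Lady Windermere's fan / telescoping argument at the level of the analytic spaces. Write the global error as
\begin{equation*}
  (Z^\Dt_\eps)^n u_0^\eps - S^{t_n}_\eps u_0^\eps
  = \sum_{j=0}^{n-1} (Z^\Dt_\eps)^{n-1-j}\Bigl( Z^\Dt_\eps S^{t_j}_\eps - S^{t_{j+1}}_\eps\Bigr) u_0^\eps,
\end{equation*}
so one needs (i) a \emph{local error} bound $Z^\Dt_\eps - S^\Dt_\eps = \mathcal O(\Dt^2)$ in the appropriate analytic norm, uniformly in $\eps$, and (ii) \emph{stability} of the discrete flow $(Z^\Dt_\eps)^{m}$, i.e. Lipschitz bounds on the analytic data, uniformly in $\eps$ and $m\Dt\le T$. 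Step (ii) follows from the local well-posedness theory of Parts $1$–$2$ reused for differences of solutions (the WKB system is quasilinear but the dangerous terms are, as above, tamed by the analytic weight). Step (i) is where the Lie–Trotter commutator must be expanded: the formal principal commutator $[\,\tfrac{\eps}{2}\Delta\,,\,\lambda|\cdot|^{2\si}\,]$, which naively is $\mathcal O(\eps)$ and thus $not$ uniformly small against $\Dt$ alone, must instead be tracked directly in the WKB variables, where the $\eps$-factors reorganize: in $(a,\phi)$ coordinates the leading term of $Z^\Dt_\eps - S^\Dt_\eps$ is an $\mathcal O(\Dt^2)$ quantity built from $\nabla\phi$, $\Delta\phi$, $|a|^{2\si}$ and their derivatives with \emph{no} inverse power of $\eps$, precisely because passing to the phase/amplitude absorbs the oscillation. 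This is the payoff of the whole WKB reformulation and is why the theorem can assert a $\Dt$-rate with $\eps$-independent constants.

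The key estimates to assemble are therefore: (a) the analytic a priori bounds of Parts $1$–$2$ (already in hand); (b) a one-step consistency estimate $\|(Z^\Dt_\eps - S^\Dt_\eps)v\|_{H_{w(t_{j+1})}} \le C\,\Dt^2$ valid on the time interval $[0,T]$ with $v$ ranging over the (uniformly bounded) orbit, proved by Taylor-expanding both flows to second order in $\Dt$ and controlling the remainder by Duhamel in the analytic scale, the crux being that each term produced carries only nonnegative powers of $\eps$; and (c) a discrete Gronwall step turning $\sum_{j} C\Dt^2$ into $C T\,\Dt$ after the stability bound (ii) is applied to each factor $(Z^\Dt_\eps)^{n-1-j}$. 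Transferring the resulting estimate from the analytic norm $H_{w(t_n)}$ down to each fixed $H^k$ is immediate since $H_{w}\hookrightarrow H^k$ with an $\eps$-independent constant for every $k$, and reading off separately the $a$-component and $\phi$-component gives the stated bound on $\|a^\eps_n - a^\eps(t_n)\|_{H^k} + \|\phi^\eps_n - \phi^\eps(t_n)\|_{H^k}$.

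\medskip

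I expect the main obstacle to be step (i)/(b): making the second-order Taylor expansion of the splitting \emph{rigorous} in the time-dependent analytic spaces while keeping every constant independent of $\eps$. The subtlety is that the nonlinear sub-flow $Y^\Dt_\eps$ of \eqref{eq:Yexpl} contains the factor $e^{-i\lambda\Dt|w|^{2\si}/\eps}$, whose expansion in $\Dt$ brings down powers of $\Dt/\eps$; these are only harmless once one commits to the WKB unknowns and checks that the phase correction is exactly the object that, after one more derivative in the transport term, still lives in the analytic scale. Getting the bookkeeping of analyticity radii right — so that the finitely many derivative losses incurred across the two sub-steps and the remainder integral are all paid for by a single fixed shrinkage of $w$ over a step of length $\Dt$ — is the delicate technical point, and is presumably handled by a Nagumo-type lemma on the weighted spaces stated earlier in the paper.
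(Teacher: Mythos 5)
Your strategy is the paper's strategy in broad outline: rewrite in Grenier's WKB variables, run the well-posedness and propagation of bounds in time-dependent analytic spaces with a decreasing analyticity radius $\rho(t)=M_0-Mt$, prove a local $\mathcal O(\Dt^2)$ error for the Lie--Trotter map in $(\phi,a)$ variables so that no inverse power of $\eps$ appears, and close via Lady Windermere's fan. There are, however, two concrete gaps in how you propose to execute this.

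\textbf{The telescoping is on the wrong side.} You write
\begin{equation*}
  (Z^\Dt_\eps)^n u_0^\eps - S^{t_n}_\eps u_0^\eps
  = \sum_{j=0}^{n-1} (Z^\Dt_\eps)^{n-1-j}\Bigl( Z^\Dt_\eps S^{t_j}_\eps - S^{t_{j+1}}_\eps\Bigr) u_0^\eps,
\end{equation*}
which needs Lipschitz stability of the iterated discrete flow $(Z^\Dt_\eps)^{m}$ on a fixed ball. In this nonlinear problem that is circular: uniform control of the numerical trajectory $v_n^\eps=(\mathcal Z_\eps^\Dt)^n v_0$ is \emph{part of what must be proved} (Part 2), and it is established only \emph{together with} the error estimate. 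You acknowledge this when you say the bounds ``should be recalled from the earlier sections'' and that stability ``follows from Parts 1--2 reused for differences,'' but that is precisely the chicken-and-egg. The paper reverses the fan, writing
\begin{equation*}
  v_n^\eps - v^\eps(t_n)
  = \sum_{j=0}^{n-1}\Bigl[(\mathcal S_\eps^{\Dt})^{n-1-j}\mathcal Z_\eps^{\Dt}v_j^\eps
    - (\mathcal S_\eps^{\Dt})^{n-1-j}\mathcal S_\eps^{\Dt}v_j^\eps\Bigr],
\end{equation*}
so that what is needed is the Lipschitz property of the \emph{exact} flow (which comes from local well-posedness, Proposition~\ref{prop:local}, part 2, and is unconditional). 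The bounds on $v_j^\eps$ and the error are then proved simultaneously by the induction of Holden--Lubich--Risebro (three coupled statements \eqref{eq:rec1}--\eqref{eq:rec3}), rather than invoked a priori. Without this reversal your argument does not close.

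\textbf{The local error: Taylor expansion is not enough, and you must budget the derivative loss.} The paper does not Taylor-expand the two sub-flows; it uses the exact integral representation of the Lie--Trotter defect from Descombes--Thalhammer (Theorem~\ref{theo:error}), applied to the vector fields $A,B$ in $(\phi,a)$ coordinates. The commutator $[A,B]$, the linearized sub-flow $\d_2\E_B$, and the linearized exact flow $\d_2\E_F$ all have to be bounded in the analytic scale; this costs several fixed levels of regularity (the final local estimate, Theorem~\ref{theo:errlocBKW}, lands in $\H^{\ell-3}_\rho\times\H^{\ell-4}_\rho$ starting from $\H^{\ell+1}_\rho\times\H^{\ell}_\rho$). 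This loss is \emph{not} repaid by shrinking the analyticity radius over one step of length $\Dt$ (the shrinkage over $\Dt$ is too small for that); it is tolerable only because the local error estimate is invoked \emph{once} per fan term, and it forces the induction to be run at two different regularity levels $\ell$ and $\ell-4$, with a tame-type propagation estimate (Proposition~\ref{prop:evol-syst} / Corollary~\ref{cor:borneZ}) to recover the higher regularity of the numerical iterates. Your sketch treats the radius bookkeeping as if a ``single fixed shrinkage of $w$'' over a $\Dt$-step pays for the losses; it does not, and the two-tier induction is the mechanism that replaces it. You should also note that the analytic setting forces $\nu=1$ (true analyticity, not Gevrey) because the cubic-type nonlinearity places equal-weight constraints on the indices for $\phi$ and $a$; a weaker Gevrey scale would not absorb the one-derivative loss in $\nabla\phi\cdot\nabla a$.
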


\begin{example}
  The assumptions of Theorem~\ref{theo:main} are satisfied as soon as
  $\hat\phi_0$ and $\hat a_0$ are compactly supported, or in the case
  of Gaussian functions, typically. 
\end{example}
Note that the first two points of the theorem imply that the functions
$a$ and $\phi$ are not rapidly oscillatory: the oscillatory nature of
both the exact and the numerical solutions is encoded in the
exponential which relates the functions $a$ and $\phi$ to $u$. 

We readily infer error estimates for the wave function and for
quadratic observables,
\begin{align*}
  \text{Position density: }& \rho^\eps(t,x)=|u^\eps(t,x)|^2.\\
\text{Current density: }& J^\eps(t,x) = \eps \IM\(\overline
u^\eps(t,x)\nabla u^\eps(t,x)\).
\end{align*} 
\begin{corollary}\label{cor:wave}
  Under the assumptions of Theorem~\ref{theo:main}, there exist $T>0$, 
  $\eps_0>0$ and 
$C,c_0$ independent of $\eps\in (0,\eps_0]$ such that for
  all $\Dt\in (0,c_0]$, for all $n\in \N$ such that $n\Dt\in [0,T]$, and for all $\eps\in(0,\eps_0]$,
 \begin{align*}
 & \left\| (Z_\eps^\Dt)^n u_0^\eps
   -S^{t_n}_\eps
      u_0^\eps\right\|_{L^2(\R^d)}\le C \frac{\Dt}{\eps},\\
   & \left\| \left\lvert (Z_\eps^\Dt)^n u_0^\eps\right\rvert^2
     -|\rho^\eps(t_n)|^2   \right\|_{L^1(\R^d)\cap L^\infty(\R^d)}\le C \Dt,\\ 
&\left\| \IM\(\eps \overline{(Z_\eps^\Dt)^n u_0^\eps}\nabla
  (Z_\eps^\Dt)^n u_0^\eps\) - J^\eps(t_n) \right\|_{L^1(\R^d)\cap
  L^\infty(\R^d)}\le C \Dt.  
 \end{align*}
\end{corollary}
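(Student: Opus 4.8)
The plan is to derive Corollary~\ref{cor:wave} directly from Theorem~\ref{theo:main} by translating the estimates on the amplitudes $a$ and phases $\phi$ into estimates on the wave functions and the quadratic observables. First I would record the algebraic identities
\begin{equation*}
  a_1 e^{i\phi_1/\eps} - a_2 e^{i\phi_2/\eps}
  = (a_1-a_2)e^{i\phi_1/\eps} + a_2\(e^{i\phi_1/\eps}-e^{i\phi_2/\eps}\),
\end{equation*}
together with $\left|e^{i\phi_1/\eps}-e^{i\phi_2/\eps}\right|\le \frac{1}{\eps}|\phi_1-\phi_2|$. Applying this with $(a_1,\phi_1)=(a_n^\eps,\phi_n^\eps)$ and $(a_2,\phi_2)=(a^\eps(t_n),\phi^\eps(t_n))$, and taking the $L^2$ norm, the first term is bounded by $\|a_n^\eps-a^\eps(t_n)\|_{L^2}\le C\Dt$ by point~3 of the theorem (with $k$ large enough that $H^k\hookrightarrow L^2$, e.g. $k=0$), while the second term is controlled by $\frac1\eps\|a^\eps(t_n)\|_{L^\infty}\|\phi_n^\eps-\phi^\eps(t_n)\|_{L^2}\le \frac{C}{\eps}\Dt$, using the uniform bound on $\|a^\eps(t_n)\|_{H^k}$ for $k>d/2$ and Sobolev embedding. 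This gives the first estimate.

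For the position density, I would write $\rho$ as a product and use
\begin{equation*}
  \left\lvert (Z_\eps^\Dt)^n u_0^\eps\right\rvert^2 - |u^\eps(t_n)|^2
  = |a_n^\eps|^2 - |a^\eps(t_n)|^2
  = \(\overline{a_n^\eps}+\overline{a^\eps(t_n)}\)\frac{a_n^\eps-a^\eps(t_n)}2 + \text{(conjugate-type term)},
\end{equation*}
the key point being that the rapid phase $e^{i\phi/\eps}$ disappears from the modulus, so no factor $\eps^{-1}$ appears. Then the $L^1$ bound follows from Cauchy--Schwarz and the uniform $L^2$ bounds on $a_n^\eps, a^\eps(t_n)$ times the $L^2$ estimate $\|a_n^\eps-a^\eps(t_n)\|_{L^2}\le C\Dt$; the $L^\infty$ bound follows from the $L^\infty$ bounds (via $H^k\hookrightarrow L^\infty$, $k>d/2$) times $\|a_n^\eps-a^\eps(t_n)\|_{L^\infty}\le C\Dt$. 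For the current density one computes, for $u=ae^{i\phi/\eps}$,
\begin{equation*}
  \eps\,\IM\(\overline u\,\nabla u\) = \eps\,\IM\(\overline a\,\nabla a\) + |a|^2\nabla\phi,
\end{equation*}
so again the oscillatory exponential cancels and $J^\eps$ is expressed in terms of $a$, $\nabla a$, $\nabla\phi$ (and an $\eps$-small remainder). Writing $J_n^\eps-J^\eps(t_n)$ as a sum of differences of such bilinear expressions and inserting the telescoping decomposition above, each term is a product of a uniformly bounded factor (in $H^k$, hence in $L^2$ and $L^\infty$ for $k$ large) and a factor controlled by $\|a_n^\eps-a^\eps(t_n)\|_{H^{k+1}}+\|\phi_n^\eps-\phi^\eps(t_n)\|_{H^{k+1}}\le C\Dt$; the $\eps\,\IM(\overline a\nabla a)$ contributions are in fact $O(\eps\Dt)$, which is absorbed.

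The only mild subtlety — rather than a genuine obstacle — is bookkeeping of Sobolev indices: each estimate on $u^\eps$ or on an observable requires point~3 of Theorem~\ref{theo:main} at a specific level $k$ (one extra derivative for the current density, because of the factor $\nabla\phi$, and one needs $k>d/2$ for the $L^\infty$ and algebra-property arguments), and the constants $C$, $c_0$, $\eps_0$, $T$ then depend only on that finite choice of $k$, which is legitimate since Theorem~\ref{theo:main} provides the family $(C_k)_{k\in\N}$ with $T,\eps_0,c_0$ uniform. Once the indices are fixed, every step is a routine application of the product rule, Cauchy--Schwarz, and the Sobolev embedding $H^k(\R^d)\hookrightarrow L^\infty(\R^d)$ for $k>d/2$, so the proof is short.
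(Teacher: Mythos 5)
Your proof is correct and coincides with what the paper intends: the decomposition $a_1e^{i\phi_1/\eps}-a_2e^{i\phi_2/\eps}=(a_1-a_2)e^{i\phi_1/\eps}+a_2(e^{i\phi_1/\eps}-e^{i\phi_2/\eps})$ with the mean-value bound $|e^{i\phi_1/\eps}-e^{i\phi_2/\eps}|\le\eps^{-1}|\phi_1-\phi_2|$ is precisely the computation carried out in the proof of Corollary~\ref{cor:local-wave}, whose last sentence flags it as the route to the first estimate of Corollary~\ref{cor:wave}. The remaining two estimates are left as ``readily inferred'' in the paper, and your derivations---noting that $|u^\eps|^2=|a^\eps|^2$ and $\eps\,\IM(\overline u^\eps\nabla u^\eps)=\eps\,\IM(\overline a^\eps\nabla a^\eps)+|a^\eps|^2\nabla\phi^\eps$ so that the rapid phase cancels, then using the $H^k$ bounds, the algebra property, and Sobolev embedding---are exactly the expected filling-in of that gap.
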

This result is in agreement with the numerical experiments presented
in \cite{BJM2}: to simulate the wave function $u^\eps$, the time step
must satisfy $\Dt =o(\eps)$, while to observe the quadratic
observables, $\Dt=o(1)$ can be chosen independent of $\eps$.

\section{Overview of the proof}
\label{sec:overview}

We present the general strategy for the proof of
Theorem~\ref{theo:main} in the case of a more general nonlinearity,
\begin{equation}
  \label{eq:nls-gen}
  i\eps\d_t u^\eps +\frac{\eps^2}{2}\Delta u^\eps =f\(
  |u^\eps|^{2}\) u^\eps,
\end{equation}
with $f$ real-valued. For initial data of the form \eqref{eq:ci-u}, it
has been noticed in \cite{CaSINUM} that the numerical discretization
preserves such a structure, in the sense that the numerical solution
satisfies the point 2. in Theorem~\ref{theo:main}. Indeed, the exact
solution can be represented as $u^\eps= a^\eps e^{i\phi^\eps/\eps}$,
where $a^\eps$ and $\phi^\eps$ are given by
\begin{equation}\label{eq:syst-grenier}
  \left\{
    \begin{aligned}
      &\d_t \phi^\eps + \frac{1}{2}|\nabla \phi^\eps|^2 +f\( |a^\eps|^2\)=0 
      ,\\
& \d_t a^\eps + \nabla \phi^\eps \cdot \nabla a^\eps
+\frac{1}{2}a^\eps\Delta \phi^\eps=
\frac{i\eps}{2}\Delta a^\eps,
    \end{aligned}
\right.
\end{equation}
with initial data
\begin{equation}
  \label{eq:ci}
  \phi^\eps_{\mid t=0}=\phi_0,\quad a^\eps_{\mid t=0}=a_0. 
\end{equation}
The main feature of this representation is that even though they must
be expected to depend on $\eps$,  $a^\eps$ and $\phi^\eps$ are
 bounded in Sobolev spaces uniformly in $\eps\in (0,1]$ on some time
 interval $[0,T]$ for some $T$ independent of $\eps$. 
\smallbreak

The idea of representing the solution $u^\eps$ under this form goes
back to E.~Grenier \cite{Grenier98}. The main features of
\eqref{eq:syst-grenier} is that the left hand side defines a
symmetrizable hyperbolic system under the assumption $f'>0$, and the
right hand side is skew adjoint (hence plays no role at the level of
energy estimates). Note that in the case of \eqref{eq:nls}, this
forces $\lambda>0$ and $\si=1$ (cubic defocusing nonlinearity). For a
nonlocal nonlinearity, $f(|u|^2)=K\ast |u|^2$, the approach of Grenier
can easily be adapted if $\hat K$ decays at least like $|\xi|^{-2}$
for large $|\xi|$ (see e.g. \cite{CaSINUM}). The approach of Grenier
has also been generalized to more general nonlinearities: see
\cite{ACARMA,ChRo09} 
for the defocusing case, and \cite{Tho08} for the focusing case.
However, we do not use  these results, as we now discuss.
\smallbreak

Indeed, the splitting scheme for \eqref{eq:nls-gen} amounts to some
splitting technique on \eqref{eq:syst-grenier}. Suppose that one
solves the linear equation \eqref{eq:linear} with initial data
$v^\eps(0)=a_0 e^{i\phi_0/\eps}$. Then the solution $v^\eps$ can be
written as $v^\eps(t)= a^\eps(t) e^{i\phi(t)/\eps}$, with $a^\eps$
and $\phi$ given by
\begin{equation}
  \label{eq:linear-syst}
  \left\{
    \begin{aligned}
      & \d_t \phi+\frac{1}{2}|\nabla \phi|^2=0,\quad \phi_{\mid
        t=0}=\phi_0,\\
&\d_t a^\eps + \nabla \phi\cdot \nabla a^\eps+\frac{1}{2}a^\eps\Delta
\phi=i\frac{\eps}{2}\Delta a^\eps,\quad a^\eps_{\mid t=0} =a_0. 
    \end{aligned}
\right.
\end{equation}
Similarly, the solution to \eqref{eq:Y} with initial data
$w^\eps(0)=a_0 e^{i\phi_0/\eps}$ can be
written as $w^\eps(t)= a(t) e^{i\phi(t)/\eps}$, with $a$
and $\phi$ given by
\begin{equation}
  \label{eq:Y-syst}
  \left\{
    \begin{aligned}
      & \d_t \phi+f(|a|^2)=0,\quad \phi_{\mid
        t=0}=\phi_0,\\
&\d_t a=0,\quad a_{\mid t=0} =a_0. 
    \end{aligned}
\right.
\end{equation}
So computing the numerical solution amounts to solving successively
\eqref{eq:linear-syst} and \eqref{eq:Y-syst}, which turns out to be a
splitting scheme on \eqref{eq:syst-grenier}. We denote by $\mathcal{X}^t_\eps:(\phi_0,a_0)\mapsto (\phi(t,\cdot),a^\eps(t,\cdot))$ the flow for \eqref{eq:linear-syst} and by $\mathcal{Y}^t_\eps:(\phi_0,a_0)\mapsto (\phi(t,\cdot),a(t,\cdot))$ the flow for \eqref{eq:Y-syst}. The Lie-Trotter splitting operator we consider for \eqref{eq:syst-grenier} is then
\begin{equation}
  \label{eq:Zcal}
  \mathcal{Z}^t_\eps=\mathcal{Y}^t_\eps\mathcal{X}^t_\eps
\end{equation}
\smallbreak

Now in the case of a cubic defocusing nonlinearity (which enters the
framework of \cite{Grenier98}), we face a loss of regularity
issue. Indeed, the reason why \eqref{eq:syst-grenier} is convenient
lies first in the structure of the left hand side, which enjoys
symmetry properties: the splitting leading to
\eqref{eq:linear-syst}--\eqref{eq:Y-syst} ruins this property. Suppose
for instance that at time $t=0$, $\phi_0\in H^s(\R^d)$ and $a_0\in
H^k(\R^d)$, for large $s$ and $k$. In \eqref{eq:linear-syst}, the
first equation propagates the $H^s$ regularity on a small time
interval, provided $s$ is large. The second equation shows that
$a^\eps$ cannot be more regular than $H^{s-2}$, due to the last term
of the left hand side. Now if we start with $\phi_0\in H^s$ and
$a_0\in H^{s-2}$ in \eqref{eq:Y-syst} (with $f(|a|^2)=|a|^2$ for a
cubic defocusing nonlinearity), we see that $\phi\in H^{s-2}$ (provided
$s-2>d/2$), and that no better regularity must be expected. So after
one iteration of the operator $Z_\eps^t$, $\phi$ has lost two levels
of regularity. When iterating $Z$ with a small time step $\Dt$, this
loss becomes fatal. This is why in \cite{CaSINUM}, it is assumed that
either $f$ is smoothing (to regain at least two levels of regularity)
or that a factor $\eps$ is present in front of $f$, so that
\eqref{eq:Y-syst} is altered to 
\begin{equation*}
 \left\{
    \begin{aligned}
      & \d_t \phi=0,\quad \phi_{\mid
        t=0}=\phi_0,\\
&\d_t a=-if(|a|^2)a,\quad a_{\mid t=0} =a_0. 
    \end{aligned}
\right.
\end{equation*}
The main technical originality of this paper is based on the remark
that if instead of working in Sobolev spaces, one works in \emph{time
  dependent analytic spaces}, it is possible to control the loss of
regularity. Such an idea goes back to \cite{PGX93}, to solve 
\eqref{eq:syst-grenier}. The fact that we consider decreasing
time dependent weight to measure the analytic regularity is strongly
inspired by the analysis of J.~Ginibre and G.~Velo in the context of
long range scattering for Hartree equations \cite{GV01}, and is also
reminiscent of the functional framework used by J.Y.~Chemin for the Navier-Stokes equation \cite{Ch04} and developed by C.~Mouhot and
C.~Villani to prove Landau damping \cite{MoVi11}. 
\smallbreak

The main technical tools needed here are presented in
Section~\ref{sec:technical}. Thanks to these tools, we can prove that
both the theoretical and the numerical solutions remains analytic in a
suitable sense on some time interval $[0,T]$ with $T>0$ independent of
$\eps$ (Sections~\ref{sec:fundamental} and \ref{sec:bounds}). 
\smallbreak

The next key estimate is the local error estimate, presented in
Section~\ref{sec:local-error}. It is based on the general formula
established in \cite{DeTh13}. As noticed in \cite{CaSINUM}, we must
apply this formula to the system
\eqref{eq:linear-syst}--\eqref{eq:Y-syst} and not only to
\eqref{eq:linear}--\eqref{eq:Y}. 
\smallbreak

With these propagating estimates and the local error estimate, the
proof of Theorem~\ref{theo:main} follows from the trick known as Lady
Windermere's fan. Note however that because of the nonlinear context,
where global bounds for the numerical solutions are not known a
priori,  the argument requires some extra care. We rely on the
induction technique introduced in \cite{HLR13}, which is sufficiently
robust to be readily adapted to our case, as in \cite{CaSINUM}.

\section{Technical background}
\label{sec:technical}

We recall here some of the technical tools introduced in
\cite{GV01}. We state the main properties established there concerning
time dependent Gevrey spaces, and simplify as much as possible the
framework, in view of the present context. 
\smallbreak
 
For $ 0<\nu\le 1$ and $\rho>0$, we introduce the exponential weight 
\begin{equation*}
  \w(\xi)= \exp \(\rho
  \max(1,|\xi|)^\nu\),
\end{equation*}
which is equivalent to $\exp(\rho\<\xi\>^\nu)$. 
Define $u_>$ and $u_<$ by:
\begin{equation*}
  \hat u_<(\xi) = \hat u(\xi)\1_{|\xi|\le 1},\quad  \hat u_>(\xi) =
  \hat u(\xi)\1_{|\xi|> 1}.
\end{equation*}
For $k,\ell\in \R$ and $0\le \ell_<<d/2$, the following families of norms are
defined in \cite{GV01}:
\begin{align*}
 & a\mapsto \(\| |\xi|^k \w(\xi)\hat a_> (\xi)\|_{L^2}^2 + \| 
 \w(\xi)\hat a_< (\xi)\|_{L^2}^2\)^{1/2},\\ 
& \phi\mapsto\( \| |\xi|^{\ell +2} \w(\xi)\hat \phi_> (\xi)\|_{L^2}^2 + \| |\xi|^{\ell_<}
 \w(\xi)\hat \phi_< (\xi)\|_{L^2}^2\)^{1/2}.
\end{align*}
The first norm is well suited to estimate amplitudes, and the second
is adapted to phases. As suggested by the above notations, the indices
will be different for amplitudes and phases. This can be related to
the fact that in the hydrodynamical setting with $\lambda>0$,
\eqref{eq:syst-grenier} with $\eps=0$ is a hyperbolic system in the
unknown $(\nabla \phi,a)$, and not in $(\phi,a)$. Indeed, eventually there will be a
shift of one index between the norm in $\phi$ and the norm in $a$ (see
Lemma~\ref{lem:evol-norm} and Proposition~\ref{prop:local} below). 

 In the properties related to these norms which
will be used in this paper, the value of $\ell_<$ is
irrelevant. Therefore, we set $\ell_<=0$, and consider only one family
of norms: for $\ell\ge 0$, we set
\begin{align*}
&\H_\rho^\ell=\{\psi\in L^2(\R^d),\quad \|\psi\|_{\H_\rho^\ell}<\infty\},\\
\text{where }&
  \|\psi\|_{\H_\rho^\ell}^2 := \| |\xi|^\ell \w(\xi)\hat \psi_> (\xi)\|_{L^2}^2 + \| 
 \w(\xi)\hat \psi_< (\xi)\|_{L^2}^2\sim
               \int_{\R^d}\<\xi\>^{2\ell}e^{2\rho\<\xi\>^\nu}
               |\hat\psi(\xi)|^2d\xi. 
\end{align*}
\begin{remark}
 The above definition is slightly different from the standard
 definition for Gevrey spaces, since low frequencies are smoothed out
 in the definition of the weight $\w$: $\max(1,|\xi|)$ (or $\<\xi\>$) in
 $\w$ is usually replaced with $|\xi|$. 
\end{remark}
Note that the following estimate is a straightforward consequence of
this definition: for any $\alpha\in \N^d$, $\ell\ge 0$,
\begin{equation}
  \label{eq:der}
  \|\d^\alpha \psi\|_{\H^\ell_\rho}\le   \|
  \psi\|_{\H^{\ell+|\alpha|}_\rho},\quad \forall \psi\in \H^{\ell+|\alpha|}_\rho.
\end{equation}
Also, in view of the standard Sobolev embedding, 
\begin{equation*}
  \|\psi\|_{L^\infty(\R^d)} \le C \|\psi\|_{H^s(\R^d)},
\end{equation*}
valid for $s>d/2$, we have
\begin{equation}\label{eq:Linfty}
  \|\psi\|_{L^\infty(\R^d)} \le C \|\psi\|_{\H_\rho^s},
\end{equation}
with the same constant $C$ independent of $\rho\ge 0$. 
\smallbreak

The above notation may seem rather heavy: it is chosen so because the
weight $\rho$ will depend on time, as we now discuss. 
For a time-dependent $\rho$, we have:
\begin{equation}\label{eq:evol-norm}
  \frac{d}{dt} \|\psi\|_{\H_\rho^\ell}^2 = 2\dot \rho
  \|\psi\|_{\H_\rho^{\ell+\nu/2}}^2+2\RE\<\psi,\d_t \psi\>_{\H_\rho^\ell}.
\end{equation}
Even though $\rho$ depends on time, we will consider below
``continuous'' $\H_\rho^\ell$ valued functions. We mean functions that
belong to 
$$C(I, \H_\rho^\ell):=\left\{\psi\in C(I,L^2)\text{ such that
  }\w\psi\in C(I,\H_0^\ell)=C(I,H^\ell)\right\}$$ 
for some interval $I$.

To fix the technical framework once and for all, we recall another
important result from \cite{GV01}. 
Consider the system
\begin{equation}\label{eq:GV0}
  \left\{
    \begin{aligned}
      &\d_t \phi + \frac{1}{2}|
      \nabla\phi|^2 +\lambda \RE \(|\nabla|^{\mu-d}a \overline a\) =0 ,\\
& \d_t a + \nabla \phi \cdot \nabla a
+\frac{1}{2}a\Delta \phi=0,
    \end{aligned}
\right.
\end{equation}
 for
some time interval $I$, and $0<\mu\le d$. 
Lemma~3.5 from \cite{GV01}, which uses \eqref{eq:evol-norm} as well as
rather involved estimates, implies
that  under the assumptions 
\begin{align*}
&  \ell>d/2+1-\nu,\quad k\ge \nu/2,\quad \ell\ge k+1-\nu,\\
&k \ge \ell +\mu-d+1 -\nu,\quad 2k>\ell+\mu-d+1-\nu+d/2,
\end{align*}
any  solution of \eqref{eq:GV0} on  $I$, such
that $(\phi,a)\in  C(I,\H_\rho^{\ell+1}\times \H_\rho^{k})\cap
L^2_{\rm loc}(I, \H_\rho^{\ell+1+\nu/2}\times \H_\rho^{k+\nu/2})$, satisfies
\begin{align*}
 & \left| \d_t \|\phi\|_{\H_\rho^{\ell+1}}^2 - 2\dot \rho
   \|\phi\|_{\H_\rho^{\ell+1+\nu/2}}^2\right|\le 
  C\(\|\phi\|_{\H_\rho^{\ell+1+\nu/2}}^2\|\phi\|_{\H_\rho^{\ell+1}} +
  \|a\|_{\H_\rho^{k+\nu/2}}\|\phi\|_{\H_\rho^{\ell+1+\nu/2}}
\|a\|_{\H_\rho^{k}}\),\\ 
 & \left| \d_t \|a\|_{\H_\rho^{k}}^2 - 2\dot \rho
   \|a\|_{\H_\rho^{k+\nu/2}}^2\right|\le 
  C\(  \|a\|_{\H_\rho^{k+\nu/2}}^2\|\phi\|_{\H_\rho^{\ell+1}} +
  \|a\|_{\H_\rho^{k+\nu/2}}\|\phi\|_{\H_\rho^{\ell+1+\nu/2}}\|a\|_{\H_\rho^{k}}\).
 \end{align*}
In the case of a cubic nonlinearity, we want to set
$\mu=d$. Therefore, the above algebraic conditions 
\begin{equation*}
  \ell \ge k+1-\nu\quad\text{and}\quad k\ge \ell +1-\nu
\end{equation*}
 imply $\nu\ge 1$, hence $\nu=1$ and $k=\ell$. In view of this remark,
 we suppose from now on $\nu=1$, that is,  
we consider analytic functions (see \cite{GV01}).
\smallbreak

Since we consider only analytic functions, we borrow from \cite{GV01}
the only inequalities that we will really use, which appear in
\cite[Lemma~3.4]{GV01}: 
\begin{lemma}\label{lem:tame} Let $m\ge 0$. Then,\\
$1.$ For  $k+s >m+d/2+2$, and $k,s\ge m+1$,
\begin{equation*}
  \| \nabla\phi\cdot \nabla a\|_{\H_\rho^m}\le C
    \|\phi\|_{\H_\rho^{s}} \|a\|_{\H_\rho^k}. 
\end{equation*} 
$2.$ For $k+s >m+2+d/2$, $k\ge m$ and  $s\ge m+2$, 
  \begin{equation*}
  \|  a\Delta \phi\|_{\H_\rho^m}\le C \|\phi\|_{\H_\rho^{s}} \|a\|_{\H_\rho^k}.
\end{equation*}
$3.$ For $s>d/2$,
\begin{equation}\label{eq:tame}
    \|\psi_1 \psi_2\|_{\H_\rho^m}\le C\(
    \|\psi_1\|_{\H_\rho^m}\|\psi_2\|_{\H_\rho^s} +
    \|\psi_1\|_{\H_\rho^s}\|\psi_2\|_{\H_\rho^m}\).  
  \end{equation}
The various constants $C$ are independent of $\rho$.
\end{lemma}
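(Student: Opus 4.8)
The plan is to pass to the Fourier side, where all three inequalities become bilinear convolution estimates, and to exploit the fact that, for $\nu=1$, the exponential weight is \emph{submultiplicative}. Indeed, since $\max(1,|\xi|)\le\max(1,|\xi-\eta|)+\max(1,|\eta|)$ for all $\xi,\eta$ (equivalently $\<\xi\>\le\<\xi-\eta\>+\<\eta\>$), one has
\begin{equation*}
  \w(\xi)\le\w(\xi-\eta)\,\w(\eta),\qquad \xi,\eta\in\R^d.
\end{equation*}
This is the precise point at which analyticity ($\nu=1$) enters: it lets the weight factor out completely, and uniformly in $\rho$, so that the three statements reduce to their classical, $\rho$-independent Sobolev counterparts.

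For the model case~3, I would write $\widehat{\psi_1\psi_2}(\xi)=(2\pi)^{-d/2}\int\hat\psi_1(\xi-\eta)\hat\psi_2(\eta)\,d\eta$ and use $\<\xi\>^m\le 2^m(\<\xi-\eta\>^m+\<\eta\>^m)$ (valid since $m\ge0$) together with submultiplicativity to bound $\<\xi\>^m\w(\xi)\,|\widehat{\psi_1\psi_2}(\xi)|$ by a sum of two convolutions $F_m*G_0$ and $F_0*G_m$, where $F_\ell(\zeta)=\<\zeta\>^\ell\w(\zeta)|\hat\psi_1(\zeta)|$ and $G_\ell(\zeta)=\<\zeta\>^\ell\w(\zeta)|\hat\psi_2(\zeta)|$, so that $\|F_\ell\|_{L^2}\sim\|\psi_1\|_{\H_\rho^\ell}$ and similarly for $G$. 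Young's inequality gives $\|F_m*G_0\|_{L^2}\le\|F_m\|_{L^2}\|G_0\|_{L^1}$, and Cauchy--Schwarz gives $\|G_0\|_{L^1}\le\|\<\cdot\>^{-s}\|_{L^2}\|G_s\|_{L^2}$, the first factor being finite exactly because $s>d/2$; the term $F_0*G_m$ is symmetric. This yields \eqref{eq:tame} with a constant independent of $\rho$.

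For the first two estimates I would run the same scheme on $\widehat{\nabla\phi\cdot\nabla a}$, whose symbol is bounded by $|\xi-\eta|\,|\eta|\le\<\xi-\eta\>\<\eta\>$, and on $\widehat{a\Delta\phi}$, where the symbol $|\eta|^2\le\<\eta\>^2$ falls entirely on the $\phi$-factor; this is what forces the shifts $k,s\ge m+1$ in the first case and $s\ge m+2$, $k\ge m$ in the second. To obtain the sharp hypotheses — a single condition on $k+s$ rather than separate ones on $k$ and $s$ — I would split the $\eta$-integral into the low--high, high--low and high--high regions (a Littlewood--Paley / Bony paraproduct splitting works equally well). In the low--high and high--low regions $\<\xi\>$ is comparable to the larger of $\<\xi-\eta\>$ and $\<\eta\>$ and the argument is as above, the side conditions $k,s\ge m+(\text{shift})$ ensuring no loss of derivatives; in the high--high region $\<\xi-\eta\>\sim\<\eta\>$, so I would split the output derivatives as $\<\xi\>^m\lesssim\<\xi-\eta\>^{m-\theta}\<\eta\>^{\theta}$ with $\theta$ a free parameter, and optimizing the ensuing Young/Cauchy--Schwarz estimate over $\theta$ is exactly what produces the condition $k+s>m+d/2+(\text{shift})$.

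The main obstacle is thus this last, high--high interaction, together with the bookkeeping (including a few edge cases) needed to land precisely on the stated thresholds; this is the standard mechanism behind Kato--Ponce / Moser-type tame product estimates and is carried out in detail in \cite[Lemma~3.4]{GV01}. The only genuinely new point is the observation that, for $\nu=1$, the analytic weight $\w$ passes harmlessly through every step of that argument by submultiplicativity, so that all constants are uniform in $\rho$; the elementary inequality \eqref{eq:der} is used freely to move derivatives $\d^\alpha$ in and out of the norms along the way.
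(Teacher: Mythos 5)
Your outline is sound, and — importantly — the paper does not reprove this lemma at all: it imports it verbatim from \cite[Lemma~3.4]{GV01}, which is also where you defer the bookkeeping, so your sketch in fact supplies \emph{more} detail than the paper itself. The mechanism you identify (pass to the Fourier side, factor the weight through the convolution by submultiplicativity, then run Young/Cauchy--Schwarz together with a low--high/high--high frequency decomposition to reach the tame thresholds) is indeed what underlies the Ginibre--Velo estimates, and the $\rho$-uniformity of the constants drops out exactly as you say. One correction, though: submultiplicativity of $\w$ is \emph{not} a consequence of $\nu=1$. For every $\nu\in(0,1]$, the map $t\mapsto t^\nu$ is subadditive, so from $\max(1,|\xi|)\le\max(1,|\xi-\eta|)+\max(1,|\eta|)$ one gets $\max(1,|\xi|)^\nu\le\max(1,|\xi-\eta|)^\nu+\max(1,|\eta|)^\nu$, hence $\w(\xi)\le\w(\xi-\eta)\w(\eta)$ for all $0<\nu\le1$; \cite[Lemma~3.4]{GV01} is in fact stated and proved for general Gevrey weights. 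The restriction to $\nu=1$ in this paper comes from a different constraint — the algebraic compatibility conditions $\ell\ge k+1-\nu$ and $k\ge\ell+1-\nu$ (needed with $\mu=d$ for the cubic nonlinearity) discussed just before Lemma~\ref{lem:tame} — not from any failure of submultiplicativity for $\nu<1$. So analyticity is essential for the overall scheme, but it is not ``the precise point'' that makes these particular product estimates work.
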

We infer the important lemma:
\begin{lemma}\label{lem:evol-norm}
    Set $\nu=1$, and let  $\si\in \N$, $\lambda\in \R$, $\ell>d/2$, and $I$ be some
    time interval. Let $(\varphi,b)\in   C(I,\H_\rho^{\ell+1}\times \H_\rho^{\ell})\cap
L^2_{\rm loc}(I, \H_\rho^{\ell+3/2}\times \H_\rho^{\ell+1/2})$. Then
any solution
$(\phi,a^\eps)\in   C(I,\H_\rho^{\ell+1}\times \H_\rho^{\ell})\cap
L^2_{\rm loc}(I, \H_\rho^{\ell+3/2}\times \H_\rho^{\ell+1/2})$ to 
\begin{equation}\label{eq:iteration0}
  \left\{
    \begin{aligned}
      &\d_t \phi + \frac{1}{2}\nabla \varphi\cdot
      \nabla\phi +\lambda |b|^{2\si} =0 ,\\
& \d_t a^\eps + \nabla \varphi \cdot \nabla a^\eps
+\frac{1}{2}a^\eps\Delta \varphi=
\frac{i\eps}{2}\Delta a^\eps,
    \end{aligned}
\right.
\end{equation}
satisfies
\begin{align*}
  \left| \d_t \|\phi\|_{\H_\rho^{\ell+1}}^2 - 2\dot \rho
   \|\phi\|_{\H_\rho^{\ell+3/2}}^2\right| &\le 
  C\Big(\|\phi\|_{\H_\rho^{\ell+3/2}}^2\|\varphi\|_{\H_\rho^{\ell+1}}
   +\|\phi\|_{\H_\rho^{\ell+3/2}}\|\varphi\|_{\H_\rho^{\ell+3/2}} \|\phi\|_{\H_\rho^{\ell+1}}\\
&\qquad +
  \|b\|_{\H_\rho^{\ell+1/2}}\|\phi\|_{\H_\rho^{\ell+3/2}}\|b\|_{\H_\rho^{\ell}}^{2\si-1}\Big),\\ 
  \left| \d_t \|a^\eps\|_{\H_\rho^{\ell}}^2 - 2\dot \rho
   \|a^\eps\|_{\H_\rho^{\ell+1/2}}^2\right| & \le 
  C\Big(  \|a^\eps\|_{\H_\rho^{\ell+1/2}}^2\|\varphi\|_{\H_\rho^{\ell+1}} +
  \|a^\eps\|_{\H_\rho^{\ell+1/2}}\|\varphi\|_{\H_\rho^{\ell+3/2}}\|a^\eps\|_{\H_\rho^{\ell}}\Big),
 \end{align*}
where $C$ is independent of $\eps$ and $\rho$. 
 \end{lemma}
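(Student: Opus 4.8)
The plan is to differentiate the squared norms by means of the identity~\eqref{eq:evol-norm} with $\nu=1$, applied with $\ell$ replaced by $\ell+1$ for $\phi$ and left equal to $\ell$ for $a^\eps$. This produces the asserted terms $2\dot\rho\,\|\phi\|_{\H_\rho^{\ell+3/2}}^2$ and $2\dot\rho\,\|a^\eps\|_{\H_\rho^{\ell+1/2}}^2$ exactly, so that the statement amounts to bounding $2\RE\langle\phi,\d_t\phi\rangle_{\H_\rho^{\ell+1}}$ and $2\RE\langle a^\eps,\d_t a^\eps\rangle_{\H_\rho^{\ell}}$ by the corresponding right-hand sides. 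Substituting \eqref{eq:iteration0}, one is reduced to estimating the five real numbers $\RE\langle\phi,\nabla\varphi\cdot\nabla\phi\rangle_{\H_\rho^{\ell+1}}$, $\RE\langle\phi,|b|^{2\si}\rangle_{\H_\rho^{\ell+1}}$, $\RE\langle a^\eps,\nabla\varphi\cdot\nabla a^\eps\rangle_{\H_\rho^{\ell}}$, $\RE\langle a^\eps,a^\eps\Delta\varphi\rangle_{\H_\rho^{\ell}}$ and $\RE\langle a^\eps,i\eps\Delta a^\eps\rangle_{\H_\rho^{\ell}}$. This is exactly the computation behind Lemma~3.5 of~\cite{GV01} for the system~\eqref{eq:GV0}, modified in three ways: the self-interactions $\tfrac12|\nabla\phi|^2$ and $\nabla\phi\cdot\nabla a+\tfrac12a\Delta\phi$ are replaced by the bilinear expressions in $(\varphi,\phi)$ and $(\varphi,a^\eps)$; the (smoothing or neutral) nonlocal term is replaced by the local power $\lambda|b|^{2\si}$; and the skew-symmetric term $\tfrac{i\eps}{2}\Delta a^\eps$ is added. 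I describe how each is handled.

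Two of the five terms are dispatched immediately. Since the weight $\langle\xi\rangle^{2\ell}e^{2\rho\langle\xi\rangle}$ defining the $\H_\rho^{\ell}$-inner product is real and even, $i\Delta$ is skew-adjoint on $\H_\rho^{\ell}$, so $\RE\langle a^\eps,i\eps\Delta a^\eps\rangle_{\H_\rho^{\ell}}=0$; this is precisely why the constant may be taken independent of $\eps$. For the nonlinearity, distributing the weight of the $\H_\rho^{\ell+1}$-inner product as $\langle\xi\rangle^{\ell+3/2}e^{\rho\langle\xi\rangle}\cdot\langle\xi\rangle^{\ell+1/2}e^{\rho\langle\xi\rangle}$ and using Cauchy--Schwarz bounds $\RE\langle\phi,|b|^{2\si}\rangle_{\H_\rho^{\ell+1}}$ by $\|\phi\|_{\H_\rho^{\ell+3/2}}\,\||b|^{2\si}\|_{\H_\rho^{\ell+1/2}}$; writing $|b|^{2\si}$ as a product of $2\si$ factors $b$ or $\bar b$ and iterating the tame estimate~\eqref{eq:tame} with $s=\ell>d/2$ (so that $\H_\rho^{\ell}$ is an algebra) yields $\||b|^{2\si}\|_{\H_\rho^{\ell+1/2}}\le C\|b\|_{\H_\rho^{\ell+1/2}}\|b\|_{\H_\rho^{\ell}}^{2\si-1}$, which is the third term in the bound for $\phi$.

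The transport terms carry all the weight, and here I follow the scheme of~\cite{GV01}. Denote by $A$ (resp.\ $A'$) the Fourier multiplier with real, even symbol $\langle\xi\rangle^{\ell+1}e^{\rho\langle\xi\rangle}$ (resp.\ $\langle\xi\rangle^{\ell}e^{\rho\langle\xi\rangle}$), so that $\langle\phi,g\rangle_{\H_\rho^{\ell+1}}=\langle A\phi,Ag\rangle_{L^2}$ and $\langle a^\eps,g\rangle_{\H_\rho^{\ell}}=\langle A'a^\eps,A'g\rangle_{L^2}$. In the amplitude equation, group $\nabla\varphi\cdot\nabla a^\eps+\tfrac12a^\eps\Delta\varphi=(\nabla\varphi\cdot\nabla+\tfrac12\Delta\varphi)a^\eps$ and use that $\nabla\varphi\cdot\nabla+\tfrac12\Delta\varphi$ is skew-adjoint on $L^2$: moving $A'$ inside, the uncommuted contribution $\RE\langle A'a^\eps,(\nabla\varphi\cdot\nabla+\tfrac12\Delta\varphi)(A'a^\eps)\rangle_{L^2}$ vanishes, and only the commutator $\langle A'a^\eps,[A',\nabla\varphi\cdot\nabla+\tfrac12\Delta\varphi]a^\eps\rangle_{L^2}$ remains. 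In the phase equation, $\phi$ being real-valued, moving $A$ through $\nabla\varphi\cdot\nabla$ and integrating by parts leaves $-\tfrac12\int(\Delta\varphi)(A\phi)^2$ together with $\langle A\phi,[A,\nabla\varphi\cdot\nabla]\phi\rangle_{L^2}$. The leftover integrals are controlled, for $\ell>d/2$, by a fractional-Leibniz (tame) estimate in Sobolev spaces: since $A\phi\in H^{1/2}$ with $\|A\phi\|_{H^{1/2}}=\|\phi\|_{\H_\rho^{\ell+3/2}}$, one has $\|(A\phi)^2\|_{H^{1-d/2}}\le C\|\phi\|_{\H_\rho^{\ell+3/2}}^2$, while $\|\Delta\varphi\|_{H^{d/2-1}}\le C\|\varphi\|_{\H_\rho^{\ell+1}}$ (the strict inequality $\ell>d/2$ providing the room needed at the borderline); a pairing then bounds $\int(\Delta\varphi)(A\phi)^2$ and $\int(\Delta\varphi)|A'a^\eps|^2$ by $C\|\varphi\|_{\H_\rho^{\ell+1}}\|\phi\|_{\H_\rho^{\ell+3/2}}^2$ and $C\|\varphi\|_{\H_\rho^{\ell+1}}\|a^\eps\|_{\H_\rho^{\ell+1/2}}^2$ respectively, which are the first terms in the two bounds. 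The commutator pieces are one order lower and are estimated via parts~1 and~2 of Lemma~\ref{lem:tame}, which, after Cauchy--Schwarz, produce the cross terms $\|\phi\|_{\H_\rho^{\ell+3/2}}\|\varphi\|_{\H_\rho^{\ell+3/2}}\|\phi\|_{\H_\rho^{\ell+1}}$ and $\|a^\eps\|_{\H_\rho^{\ell+1/2}}\|\varphi\|_{\H_\rho^{\ell+3/2}}\|a^\eps\|_{\H_\rho^{\ell}}$, completing the two estimates.

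The main obstacle is the commutator analysis: one must verify that commuting the analytic multiplier $A$ (resp.\ $A'$) through the first-order operator $\nabla\varphi\cdot\nabla$ costs effectively only half a derivative, so that the outcome is dominated by products carrying a single factor at the top level $\ell+3/2$ (resp.\ $\ell+1/2$) and all the others at the base level $\ell+1$ or $\ell$ --- never two factors at the top level, which would defeat the subsequent Gronwall argument --- and that all constants remain uniform in $\rho$. The key point is that the exponential part of the weight must be distributed using $e^{\rho\langle\xi\rangle}\le e^{\rho\langle\xi-\eta\rangle}e^{\rho\langle\eta\rangle}$, which is exactly why $\varphi$ is required to lie in the scale $\H_\rho^{\bullet}$, and which keeps the constants independent of $\rho$. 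All of this is already present in~\cite{GV01}, packaged here in Lemma~\ref{lem:tame}; the passage from the self-interactions of~\eqref{eq:GV0} to the bilinear terms of~\eqref{eq:iteration0} and the insertion of $\tfrac{i\eps}{2}\Delta a^\eps$ affect only the bookkeeping.
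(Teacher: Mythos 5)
Your treatment of the skew-adjoint $i\Delta$ term and of the nonlinear term $\lambda|b|^{2\si}$ agrees with the paper, but your handling of the transport terms diverges from the actual proof in a way that introduces a genuine gap. The paper applies the same Cauchy--Schwarz device you used for the nonlinearity directly to the transport term: it writes
\begin{equation*}
  \bigl\lvert \langle \phi,\nabla\varphi\cdot\nabla\phi\rangle_{\H_\rho^{\ell+1}}\bigr\rvert
  \le \|\phi\|_{\H_\rho^{\ell+3/2}}\,\|\nabla\varphi\cdot\nabla\phi\|_{\H_\rho^{\ell+1/2}},
\end{equation*}
then bounds $\|\nabla\varphi\cdot\nabla\phi\|_{\H_\rho^{\ell+1/2}}$ via estimate \eqref{eq:tame} with $m=\ell+1/2$, $s=\ell$, and $\|\nabla\varphi\cdot\nabla a^\eps\|_{\H_\rho^{\ell-1/2}}$, $\|a^\eps\Delta\varphi\|_{\H_\rho^{\ell-1/2}}$ via parts 1 and 2 of Lemma~\ref{lem:tame}. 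Because the unbalanced split $\H_\rho^{\ell+3/2}\times\H_\rho^{\ell+1/2}$ puts a half-derivative loss on the test factor, which the negative $2\dot\rho\,\|\cdot\|^2$ term absorbs, no cancellation of the leading-order piece is needed at all.

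You instead resurrect the symmetric-hyperbolic scheme: exploit skew-adjointness of $\nabla\varphi\cdot\nabla+\frac12\Delta\varphi$ on $L^2$, commute the analytic multipliers $A,A'$ across it, and estimate the resulting commutators. Two problems. First, this misattributes the closure of the argument: parts 1 and 2 of Lemma~\ref{lem:tame} are product estimates for $\nabla\phi\cdot\nabla a$ and $a\Delta\phi$; they say nothing about $[A,\nabla\varphi\cdot\nabla]$ or $[A',\nabla\varphi\cdot\nabla+\frac12\Delta\varphi]$, and no commutator estimate in the $\H_\rho$ scale is established anywhere in the paper. With an exponential weight $e^{\rho\langle\xi\rangle}$, the difference $A(\xi)-A(\eta)$ does not obey the usual mean-value bound by $|\xi-\eta|$ uniformly in $\rho$; the only available tool is the multiplicative inequality $e^{\rho\langle\xi\rangle}\le e^{\rho\langle\xi-\eta\rangle}e^{\rho\langle\eta\rangle}$, which is precisely what underlies Lemma~\ref{lem:tame}, not a commutator bound. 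Second, the leftover integral $\int(\Delta\varphi)(A\phi)^2$ is controlled by a fractional Leibniz estimate that is borderline (in $d=1$ it needs $H^{1/2}\times H^{1/2}\to H^{s}$, $s<1/2$, plus the slack from $\ell>d/2$), and again is not furnished by the lemmas at hand. Your own remark that the outcome should ``never'' have two factors at the top level is also inconsistent with the claimed bound $\|\phi\|_{\H_\rho^{\ell+3/2}}^2\|\varphi\|_{\H_\rho^{\ell+1}}$, which has exactly two top-level factors (and is fine precisely because of the $\dot\rho$ absorption). The commutator detour is what the paper's analytic-regularity strategy is designed to avoid; as written, your proof relies on estimates it has not proved.
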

 \begin{proof}
   In view of \eqref{eq:evol-norm} and \eqref{eq:iteration0}, we have
   \begin{equation*}
     \d_t \|\phi\|_{\H_\rho^{\ell+1}}^2 - 2\dot \rho
   \|\phi\|_{\H_\rho^{\ell+3/2}}^2 =- \RE\<\phi, \nabla\varphi
   \cdot \nabla \phi\>_{\H_\rho^{\ell+1}} -2\lambda
   \RE\<\phi,|b|^{2\si}\>_{\H_\rho^{\ell+1}}. 
   \end{equation*}
Cauchy-Schwarz inequality yields
\begin{equation*}
   \left\lvert\<\phi, \nabla\varphi
   \cdot \nabla \phi\>_{\H_\rho^{\ell+1}} \right\rvert \le
 \|\phi\|_{\H_\rho^{\ell+3/2}}\| \nabla\varphi
   \cdot \nabla \phi\|_{\H_\rho^{\ell+1/2}}.
\end{equation*}
Inequality \eqref{eq:tame} with $m=\ell+1/2$ and $s=\ell$ yields
\begin{equation}\label{eq:produit-grad}
\begin{aligned}
  \| \nabla\varphi
   \cdot \nabla \phi\|_{\H_\rho^{\ell+1/2}} &\le C \(\|\nabla
   \varphi\|_{\H_\rho^{\ell+1/2}} \|\nabla \phi\|_{\H_\rho^{\ell}}  +\|\nabla
   \varphi\|_{\H_\rho^{\ell}}  \|\nabla   \phi\|_{\H_\rho^{\ell+1/2}} \)\\
&\le C \(\|\varphi\|_{\H_\rho^{\ell+3/2}} \| \phi\|_{\H_\rho^{\ell+1}}  +\|
   \varphi\|_{\H_\rho^{\ell+1}}  \|   \phi\|_{\H_\rho^{\ell+3/2}}  \),
\end{aligned}
\end{equation}
where we have used \eqref{eq:der}. The term involving $b$ can be treated
similarly. Indeed, using \eqref{eq:tame} on the one hand with
$m=\ell+1/2$ and $s=\ell$ and on the other hand with $m=\ell=s$, we
can prove by induction on $\sigma$ that 
\begin{equation}
\||b|^{2\sigma}\|_{\H_\rho^{\ell+1/2}}\le C\| b\|^{2\sigma-1}_{\H_\rho^{\ell}}\|b\|_{\H_\rho^{\ell+1/2}},
\end{equation}
 hence the first inequality for Lemma~\ref{lem:evol-norm}.

For the second inequality,   
\begin{align*}
     \d_t \|a^\eps\|_{\H_\rho^{\ell}}^2 - 2\dot \rho
   \|a^\eps\|_{\H_\rho^{\ell+1/2}}^2 &=- 2\RE\<a^\eps, \nabla\varphi
   \cdot \nabla a^\eps\>_{\H_\rho^{\ell}} -\RE\<a^\eps, a^\eps\Delta\varphi
   \>_{\H_\rho^{\ell}} \\
&\quad+\eps \RE \<a^\eps,i\Delta a^\eps\>_{\H_\rho^\ell}.
   \end{align*}
Remark
that
\begin{equation*}
  \RE \<a^\eps,i\Delta a^\eps\>_{\H_\rho^\ell}=0,
\end{equation*}
so the Laplacian term is not present in energy estimates, which are
therefore independent of $\eps$. Like before, Cauchy-Schwarz
inequality yields
\begin{equation*}
  |\<a^\eps, \nabla\varphi
   \cdot \nabla a^\eps\>_{\H_\rho^{\ell}}|\le \|a^\eps\|_{\H_\rho^{\ell+1/2}}\|\nabla\varphi
   \cdot \nabla a^\eps\|_{\H_\rho^{\ell-1/2}}.
\end{equation*}
The last term is estimated thanks to the first point in
Lemma~\ref{lem:tame}, with
\begin{equation*}
  m=\ell-\frac{1}{2},\quad k=\ell+\frac{1}{2},\quad s= \ell+1.
\end{equation*}
Similarly,
\begin{equation*}
  |\<a^\eps, a^\eps\Delta\varphi
   \>_{\H_\rho^{\ell}} |\le \|a^\eps\|_{\H_\rho^{\ell+1/2}}\|
   a^\eps\Delta \varphi\|_{\H_\rho^{\ell-1/2}},
\end{equation*}
and the last term  is estimated thanks to the second point in
Lemma~\ref{lem:tame}, with
\begin{equation*}
  m=\ell-\frac{1}{2},\quad k=\ell,\quad s= \ell+\frac{3}{2}.
\end{equation*}
The lemma follows easily.
 \end{proof}

\section{A fundamental estimate}
\label{sec:fundamental}

In the framework of Theorem~\ref{theo:main}, the initial datum
$u^\eps_{\mid t=0}=a_0
e^{i\phi_0/\eps}$ belongs
to $H^\infty$, so the existence of 
$T^\eps>0$ (depending a priori on $\eps$),  and of a unique solution
$u^\eps \in C([0,T^\eps],H^\infty)$ 
to \eqref{eq:nls}-\eqref{eq:ci-u}, stems from standard theory (see
e.g. \cite{CazHar}). The fact that the existence time may be chosen
independent of $\eps$, along with the rest of the first point of
Theorem~\ref{theo:main}, stems from Proposition~\ref{prop:local} below.
\smallbreak

For a decreasing function $\rho$, we introduce the norm defined by 
\begin{equation}\label{eq:triple}
  \vvvert \psi\vvvert_{\ell,t}^2 = \max\left(\sup_{0\le s\le
    t}\|\psi(s)\|_{\H_{\rho(s)}^\ell} ^2,2\int_0^t |\dot \rho(s)|
  \|\psi(s)\|_{\H_{\rho(s)}^{\ell+1/2}}^2 ds\right). 
\end{equation}

\begin{proposition}\label{prop:local} 
  Let $\lambda\in \R$, $\ell>d/2+1$, $M_0>0$
  and $(\phi_0,a_0)\in \H_{M_0}^{\ell+1}\times \H_{M_0}^{\ell}$.\\
$1.$ There exists $M\gg 1$ such that if $T<M_0/M$ and $\rho(t) =M_0-Mt$,
\eqref{eq:syst-grenier}--\eqref{eq:ci} has a unique solution
\begin{equation*}
(\phi^\eps,a^\eps)\in 
C([0,T],\H_\rho^{\ell+1}\times \H_\rho^{\ell})\cap
L^2([0,T], \H_\rho^{\ell+3/2}\times \H_\rho^{\ell+1/2}),
\end{equation*}
with
\begin{equation}\label{eq:bound}
\vvvert  \phi^\eps\vvvert_{\ell+1,T}^2 \le 2\|\phi_0\|_{\H_{M_0}^{\ell+1}}^2+\| a_0\|_{\H_{M_0}^\ell}^{4\sigma},\quad\vvvert
  a^\eps\vvvert_{\ell,T}^2 \le 2\|a_0\|_{\H_{M_0}^{\ell}}^2.
\end{equation}
$2.$ If $R>0$ and $(\phi_0,a_0), (\varphi_0,b_0)\in
  \H_{M_0}^{\ell+1}\times \H_{M_0}^{\ell}$, with
  \begin{equation*}
    \|\phi_0\|_{\H_{M_0}^{\ell+1}}+\|a_0\|_{\H_{M_0}^\ell}\le R,\quad
    \|\varphi_0\|_{\H_{M_0}^{\ell+1}}+\|b_0\|_{\H_{M_0}^\ell}\le R ,
  \end{equation*}
there exists $K=K(R)$ such that if $M$ is chosen sufficiently large such that according to the first part of the proposition, \eqref{eq:syst-grenier}--\eqref{eq:ci} has solutions $(\phi^\eps,a^\eps)$ and $(\varphi^\eps,b^\eps)$ on $[0,T]$ corresponding respectively to the initial data $(\phi_0,a_0)$ and $(\varphi_0,b_0)$ (with the same choice of $\rho$ and the same assumption $T<M_0/M$), then 
\begin{equation*}
   \vvvert
\phi^\eps-\varphi^\eps\vvvert_{\ell+1,T}+\vvvert
a^\eps - b^\eps\vvvert_{\ell,T} \le K\(
\|\phi_0-\varphi_0\|_{\H_{M_0}^{\ell+1}} + \|a_0-b_0\|_{\H_{M_0}^{\ell}}\).
\end{equation*}

\end{proposition}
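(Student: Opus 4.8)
The plan is to prove both parts by a fixed-point/continuity argument in the triple-norm space, using Lemma~\ref{lem:evol-norm} as the driving energy estimate. For Part~1, I would set up the iteration scheme $(\phi^{(n+1)},a^{\eps,(n+1)})$ defined by \eqref{eq:iteration0} with $(\varphi,b)=(\phi^{(n)},a^{\eps,(n)})$, starting from the constant-in-time initial data $(\phi_0,a_0)$. The point of the linearization in \eqref{eq:iteration0} is exactly that each step is a linear transport-type system, so existence of each iterate in $C([0,T],\H_\rho^{\ell+1}\times\H_\rho^{\ell})\cap L^2([0,T],\H_\rho^{\ell+3/2}\times\H_\rho^{\ell+1/2})$ is standard once the coefficients are controlled. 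The crux is the a priori bound: I would postulate the ball $B$ defined by $\vvvert\phi\vvvert_{\ell+1,T}^2\le 2\|\phi_0\|_{\H_{M_0}^{\ell+1}}^2+\|a_0\|_{\H_{M_0}^\ell}^{4\sigma}$ and $\vvvert a^\eps\vvvert_{\ell,T}^2\le 2\|a_0\|_{\H_{M_0}^\ell}^2$, assume $(\varphi,b)\in B$, and show $(\phi,a^\eps)\in B$ provided $M$ is large enough. Integrating the two differential inequalities of Lemma~\ref{lem:evol-norm} in time, the left-hand side produces $\|\phi(t)\|_{\H_{\rho(t)}^{\ell+1}}^2-\|\phi_0\|_{\H_{M_0}^{\ell+1}}^2$ minus (since $\dot\rho=-M<0$) a \emph{favorable} term $2M\int_0^t\|\phi\|_{\H_\rho^{\ell+3/2}}^2$, i.e. exactly $2\int_0^t|\dot\rho|\|\phi\|_{\H_\rho^{\ell+3/2}}^2$; this is the whole reason the weight must decrease. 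The right-hand side terms are then of the form $\int_0^t\|\phi\|_{\H_\rho^{\ell+3/2}}^2\cdot(\text{bounded by }R)$ and similar mixed terms, each of which can be absorbed by the $M\int_0^t\|\phi\|_{\H_\rho^{\ell+3/2}}^2$ gain once $M\gg 1$ depending only on $R$ (and on $\|a_0\|^{2\sigma-1}$ type quantities). After absorption, what remains on the right is controlled by $\vvvert\cdot\vvvert_{\ell+1,T}$ times small factors, closing the bound on $B$; the Cauchy-Schwarz trick $ab\le \frac{M}{2}a^2+\frac{1}{2M}b^2$ is what converts the mixed terms into an absorbable piece plus a lower-order piece.

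Having the invariant ball, I would then prove the scheme is a contraction \emph{in the lower-regularity norm} $\vvvert\cdot\vvvert_{\ell,T}+\vvvert\cdot\vvvert_{\ell-1,T}$ (a standard device: one does not expect contraction at top regularity). The difference of two consecutive iterates solves a system of the same form \eqref{eq:iteration0} with source terms that are differences of the nonlinear/coefficient terms; estimating these via the tame inequalities of Lemma~\ref{lem:tame} and \eqref{eq:tame}, and again using the $|\dot\rho|$-gain to absorb, gives a contraction factor that is $O(M^{-1/2})$ or $O(T)$ and hence $<1$ for $M$ large. This yields a fixed point $(\phi^\eps,a^\eps)$, which by construction solves \eqref{eq:syst-grenier}--\eqref{eq:ci} (note the linearized coefficient $\varphi$ becomes $\phi$ at the fixed point, and $\tfrac12\nabla\phi\cdot\nabla\phi=\tfrac12|\nabla\phi|^2$, so \eqref{eq:iteration0} becomes \eqref{eq:syst-grenier} with $f(|a|^2)=\lambda|a|^{2\sigma}$), lies in $B$, hence satisfies \eqref{eq:bound}; uniqueness in the stated class follows from the same difference estimate applied directly to two solutions. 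One should check that $\eps$ enters only through the skew-adjoint Laplacian term, which Lemma~\ref{lem:evol-norm} already shows contributes nothing to the energy identities, so all constants are $\eps$-independent.

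Part~2 is then essentially a re-run of the contraction estimate, but comparing the two genuine solutions $(\phi^\eps,a^\eps)$ and $(\varphi^\eps,b^\eps)$ with \emph{different} initial data rather than two iterates with the same data. Writing $(\psi,c^\eps)=(\phi^\eps-\varphi^\eps,a^\eps-b^\eps)$, this pair solves a linear system of the form \eqref{eq:iteration0} driven by $(\phi^\eps,a^\eps)$ and $(\varphi^\eps,b^\eps)$ with right-hand sides that are bilinear in $(\psi,c^\eps)$ and the (now known, $B$-bounded) solutions; applying \eqref{eq:evol-norm}, Lemma~\ref{lem:tame}, \eqref{eq:tame}, and the $|\dot\rho|$-absorption exactly as above, one gets $\vvvert\psi\vvvert_{\ell+1,T}^2+\vvvert c^\eps\vvvert_{\ell,T}^2\le C(R)\bigl(\|\psi(0)\|_{\H_{M_0}^{\ell+1}}^2+\|c^\eps(0)\|_{\H_{M_0}^{\ell}}^2\bigr)$ after choosing $M=M(R)$ large, with $K=K(R)=\sqrt{C(R)}$. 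Here there is no need to drop a regularity level because we are not iterating: the energy method closes directly at $(\ell+1,\ell)$.

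The main obstacle I expect is purely bookkeeping rather than conceptual: matching the regularity indices so that every product is estimated in the regime where Lemma~\ref{lem:tame} applies (the conditions $k+s>m+d/2+2$ etc., which is where $\ell>d/2+1$ is used), and simultaneously arranging that every ``bad'' term on the right-hand side of Lemma~\ref{lem:evol-norm} carries a factor $\|\phi\|_{\H_\rho^{\ell+3/2}}$ or $\|a^\eps\|_{\H_\rho^{\ell+1/2}}$ \emph{squared} or paired with something small, so that it is genuinely absorbable by the $2|\dot\rho|=2M$ gain and does not merely reproduce itself. The term $\|b\|_{\H_\rho^{\ell+1/2}}\|\phi\|_{\H_\rho^{\ell+3/2}}\|b\|_{\H_\rho^{\ell}}^{2\sigma-1}$ is the one that forces the $\|a_0\|^{4\sigma}$ (rather than $\|a_0\|^2$) term in the postulated bound \eqref{eq:bound}, and tracking how the power of $\|a_0\|$ propagates through the time integration and the Cauchy-Schwarz splitting is the delicate point in fixing the correct invariant ball.
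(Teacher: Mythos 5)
Your overall approach --- the iteration scheme based on \eqref{eq:iteration0}, the invariant-ball argument driven by Lemma~\ref{lem:evol-norm}, absorption into the $2|\dot\rho|$-gain from the $L^2_t$ component of the triple norm, then a difference estimate for contraction and for the Lipschitz property of the flow --- is the same as the paper's, and the key mechanism (the decreasing weight converting \eqref{eq:evol-norm} into a parabolic-type estimate) is correctly identified. However, there is one conceptual slip that is worth flagging, because it is exactly the point the paper chooses to emphasize: you propose to run the contraction step ``in the lower-regularity norm $\vvvert\cdot\vvvert_{\ell,T}+\vvvert\cdot\vvvert_{\ell-1,T}$'', calling this a standard device since ``one does not expect contraction at top regularity.'' That expectation is correct for hyperbolic symmetric systems in Sobolev spaces, but it is precisely what the time-dependent analytic framework circumvents: because the triple norm already contains the $|\dot\rho|$-weighted $L^2_t\H_\rho^{\ell+1/2}$ (resp.\ $\H_\rho^{\ell+3/2}$) piece, the difference estimate closes at the \emph{same} regularity $(\ell+1,\ell)$ as the boundedness step. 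The paper states this explicitly as a contrast with the Sobolev case.

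You in fact tacitly acknowledge this in your treatment of Part~2, where you correctly remark that ``the energy method closes directly at $(\ell+1,\ell)$'' for the difference of two genuine solutions. But the difference of two iterates in Part~1 has identical algebraic structure (a linear transport system in $\delta\phi_{j+1}$, $\delta a_{j+1}$ with sources involving $\delta\phi_j$, $\delta a_j$), so whatever closes at $(\ell+1,\ell)$ for one closes for the other; there is no reason to drop regularity in Part~1 but not in Part~2. Dropping a level is not fatal here --- with $\ell>d/2+1$ you still have $\ell-1>d/2$ so the tame estimates of Lemma~\ref{lem:tame} remain applicable, and uniqueness in a weaker norm implies uniqueness in the stronger class --- but it is unnecessary, it complicates the bookkeeping, and it obscures the structural reason the analytic/Gevrey framework was introduced in the first place. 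You should instead carry out the contraction directly in $\vvvert\cdot\vvvert_{\ell+1,T}+\vvvert\cdot\vvvert_{\ell,T}$, obtaining a factor $O(1/M)$ from the same absorption mechanism used in the boundedness step, which is what the paper does.
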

\begin{remark}
  The proof yields a rather implicit dependence of $M$ upon $M_0$ and $(\phi_0,a_0)$. As
  a consequence, it is not clear how to choose the best possible $T$,
  even for initial data whose Fourier transform is compactly
  supported. For our present concern, the important information is
  that we get some positive $T$ independent of $\eps$. 
\end{remark}
\begin{proof}
 To construct the solution, we resume the standard scheme from
 hyperbolic symmetric systems (see e.g. \cite{AlGe07}), that is, we
 consider the iterative  scheme defined by 
\begin{equation}\label{eq:iteration}
  \left\{
    \begin{aligned}
      &\d_t \phi^\eps_{j+1} + \frac{1}{2}\nabla \phi^\eps_j\cdot
      \nabla\phi^\eps_{j+1} +f(|a_j^\eps|^2) =0  
      ,\quad \phi^\eps_{j+1\mid t=0}=\phi_0,\\
& \d_t a^\eps_{j+1} + \nabla \phi^\eps_j \cdot \nabla a^\eps_{j+1}
+\frac{1}{2}a^\eps_{j+1}\Delta \phi^\eps_j=
\frac{i\eps}{2}\Delta a^\eps_{j+1},\quad a^\eps_{j+1\mid t=0}=a_0,
    \end{aligned}
\right.
\end{equation}
with $f(|a|^2)=\lambda |a|^{2\si}$, initialized with $(\phi_0^\eps,a_0^\eps)(t)=(\phi_0,a_0)$. For functions at the level of regularity of the norm \eqref{eq:triple} with $\ell>d/2$, the above
scheme is well defined: if $\vvvert \phi_j^\eps\vvvert_{\ell+1,T}+\vvvert
a_j^\eps\vvvert_{\ell,T}$ is finite, then $\phi_{j+1}^\eps$ and
$a_{j+1}^\eps$ are well-defined. Indeed, in the first equation, $\phi_{j+1}^\eps$
solves a linear transport equation with smooth coefficients, and the
second equation is equivalent to the linear Schr\"odinger equation
\begin{equation*}
  i\eps\d_t v_{j+1}^\eps +\frac{\eps^2}{2}\Delta v_{j+1}^\eps =
  -\(\d_t \phi_j^\eps +\frac{1}{2}|\nabla
  \phi_j^\eps|^2\)v_{j+1}^\eps,\quad v_{j+1\mid t=0}^\eps= a_0 e^{i\phi_j^\eps(0)/\eps},
\end{equation*}
through the relation $v_{j+1}^\eps = a_{j+1}^\eps
e^{i\phi_j^\eps/\eps}$. This is a linear Schr\"odinger equation with a
smooth and bounded external time-dependent potential, for which the
existence of an $L^2$-solution is granted. 
\smallbreak

The proof of the first part of the proposition goes in two steps: first, we prove
that the sequence $(\vvvert \phi_j^\eps\vvvert_{\ell+1,T}+\vvvert
a_j^\eps\vvvert_{\ell,T})_{j\ge 0}$ is bounded for some $T>0$ sufficiently small,
but independent of $\eps$. Then we show that
up to decreasing $T$, the series  
$$\sum_{j\ge 0} \( \vvvert
\phi_{j+1}^\eps-\phi_j^\eps\vvvert_{\ell+1,T}+\vvvert
a_{j+1}^\eps - a_j^\eps\vvvert_{\ell,T}\)$$ 
is converging. Note that unlike in the case of
hyperbolic symmetric systems in Sobolev spaces, the regularity is the
same at the two steps of the proof (in Sobolev spaces, the standard
proof involves first a bound in the large norm, then convergence in
the small norm). 
\smallbreak

\noindent {\bf First step: the sequence is bounded.} By integration,
Lemma~\ref{lem:evol-norm} yields, for a decreasing $\rho(t)$ and $T>0$
to be chosen later,
%\begin{align*}
%  \vvvert  \phi_{j+1}^\eps\vvvert_{\ell+1,T}^2 &\le \vvvert
%  \phi_{j+1}^\eps\vvvert_{\ell+1,0}^2 +C \int_0^T
%  \|\phi_{j+1}^\eps(t)\|_{\H_{\rho(t)}^{\ell+3/2}}^2
%  \|\phi_j^\eps(t)\|_{\H_{\rho(t)}^{\ell +1}}dt\\
%&\qquad 
%+C \int_0^T
%\|\phi_{j+1}^\eps(t)\|_{\H_{\rho(t)}^{\ell+3/2}} \|\phi_{j}^\eps(t)\|%_{\H_{\rho(t)}^{\ell+3/2}}
%\|\phi_{j+1}^\eps(t)\|_{\H_{\rho(t)}^{\ell+1}}dt\\
%&\qquad 
%+C \int_0^T\|a_{j}^\eps(t)\|_{\H_{\rho(t)}^{\ell+1/2}}
%\|\phi_{j+1}^\eps(t)\|_{\H_{\rho(t)}^{\ell+3/2}} 
%\|a_{j}^\eps(t)\|_{\H_{\rho(t)}^{\ell}}^{2\si-1}dt,\\
%  \vvvert  a_{j+1}^\eps\vvvert_{\ell,T}^2 &\le \vvvert
% a_{j+1}^\eps\vvvert_{\ell,0}^2 +C \int_0^T
%  \|a_{j+1}^\eps(t)\|_{\H_{\rho(t)}^{\ell+1/2}}^2
%  \|\phi_j^\eps(t)\|_{\H_{\rho(t)}^{\ell+1}}dt\\
%&\qquad 
%+C \int_0^T\|a_{j+1}^\eps(t)\|_{\H_{\rho(t)}^{\ell+1/2}}
%\|\phi_{j}^\eps(t)\|_{\H_{\rho(t)}^{\ell+3/2}} 
%\|a_{j+1}^\eps(t)\|_{\H_{\rho(t)}^{\ell}}dt.
%\end{align*}
%In view of the initial data, we infer
\begin{align*}
  \vvvert  \phi_{j+1}^\eps\vvvert_{\ell+1,T}^2 &\le \|
  \phi_0\|_{\H_{\rho(0)}^{\ell+1}}^2 +C \int_0^T
 \frac{1}{|\dot \rho(t)|} |\dot \rho(t)|\|\phi_{j+1}^\eps(t)\|_{\H_{\rho(t)}^{\ell+3/2}}^2
  \|\phi_j^\eps(t)\|_{\H_{\rho(t)}^{\ell +1}}dt\\
&\qquad +C \int_0^T \frac{1}{|\dot \rho(t)|} |\dot \rho(t)|
  \|\phi_{j+1}^\eps(t)\|_{\H_{\rho(t)}^{\ell+3/2}}
  \|\phi_{j}^\eps(t)\|_{\H_{\rho(t)}^{\ell+3/2}} 
\|\phi_{j+1}^\eps(t)\|_{\H_{\rho(t)}^{\ell+1}}dt\\
&\qquad 
+C \int_0^T \frac{1}{|\dot \rho(t)|} |\dot \rho(t)|\|a_{j}^\eps(t)\|_{\H_{\rho(t)}^{\ell+1/2}}
\|\phi_{j+1}^\eps(t)\|_{\H_{\rho(t)}^{\ell+3/2}} 
\|a_{j}^\eps(t)\|_{\H_{\rho(t)}^{\ell}}^{2\si-1}dt,\\
  \vvvert  a_{j+1}^\eps\vvvert_{\ell,T}^2 &\le \|
 a_0\|_{\H_{\rho(0)}^\ell}^2 +C \int_0^T
   \frac{1}{|\dot \rho(t)|} |\dot \rho(t)|\|a_{j+1}^\eps(t)\|_{\H_{\rho(t)}^{\ell+1/2}}^2
  \|\phi_j^\eps(t)\|_{\H_{\rho(t)}^{\ell+1}}dt\\
&\qquad 
+C \int_0^T \frac{1}{|\dot \rho(t)|} |\dot \rho(t)|\|a_{j+1}^\eps(t)\|_{\H_{\rho(t)}^{\ell+1/2}}
\|\phi_{j}^\eps(t)\|_{\H_{\rho(t)}^{\ell+3/2}} 
\|a_{j+1}^\eps(t)\|_{\H_{\rho(t)}^{\ell}}dt.
\end{align*}
H\"older and Cauchy-Schwarz inequalities yield
\begin{align*}
  \vvvert  \phi_{j+1}^\eps\vvvert_{\ell+1,T}^2 &\le \|
  \phi_0\|_{\H_{\rho(0)}^{\ell+1}}^2 +C \(\sup_{0\le t\le T}  
 \frac{1}{|\dot   \rho(t)|}  \)
\vvvert \phi_{j+1}^\eps\vvvert_{\ell+1,T}^2 
\vvvert \phi_{j}^\eps\vvvert_{\ell+1,T}
\\
&\qquad 
+C\( \sup_{0\le t\le T}  
 \frac{1}{|\dot   \rho(t)|} \)  
\vvvert \phi_{j+1}^\eps\vvvert_{\ell+1,T}
\vvvert a_{j}^\eps\vvvert_{\ell,T}^{2\si},\\
  \vvvert  a_{j+1}^\eps\vvvert_{\ell,T}^2 &\le \|
 a_0\|_{\H_{\rho(0)}^\ell}^2 +C \(\sup_{0\le t\le T}  
 \frac{1}{|\dot   \rho(t)|}   \)
\vvvert a_{j+1}^\eps\vvvert_{\ell,T}^2 
\vvvert \phi_{j}^\eps\vvvert_{\ell+1,T}.
\end{align*}
Recall that $M_0>0$ is given. Take $\phi_0\in \H_{M_0}^{\ell+1}$,
$a_0\in \H_{M_0}^{\ell}$ and
set $\rho(t)=M_0-Mt$. Under the condition
\begin{equation}\label{eq:rec}
\frac{C}{M}\vvvert\phi_j^\eps\vvvert_{\ell+1,T}\le\frac{1}{4},
\end{equation}
the previous inequalities imply
\begin{align*}
\frac{1}{2}\vvvert\phi_{j+1}^\eps\vvvert_{\ell+1,T}^2&\le\|\phi_0\|_{\H_{M_0}^{\ell+1}}^2+\frac{C^2}{M^2}\vvvert a_j^\eps\vvvert_{\ell,T}^{4\sigma},\\
\frac{3}{4}\vvvert a_{j+1}^\eps\vvvert_{\ell,T}^2&\le\|a_0\|_{\H_{M_0}^{\ell}}^2.
\end{align*}
Let us now choose $M=|\dot \rho(t)|$ is
sufficiently large  such that \eqref{eq:rec} holds for $j=0$ and such that 
\begin{align*}
&2\|\phi_0\|_{\H_{M_0}^{\ell+1}}^2+\frac{2C^2}{M^2}\(\frac{4}{3}\vvvert a_0^\eps\vvvert_{\ell,T}^2\)^{2\sigma}\le \frac{M^2}{16C^2},\\
&\frac{2C^2}{M^2}\(\frac{4}{3}\)^{2\sigma}\le 1.
\end{align*}
Note that in view of \eqref{eq:triple}, for all $T<M_0/M$ (so that $\rho$ remains positive on
$[0,T]$),
$$\vvvert a_0^\eps\vvvert_{\ell,T}^2=\max\(\|a_0\|_{\H_{M_0}^{\ell}}^2,\int\<\xi\>^{2\ell}e^{2M_0\<\xi\>}|\hat{a_0}(\xi)|^2\int_0^T 2M\<\xi\>e^{-2Mt\<\xi\>}dtd\xi \)=\|a_0\|_{\H_{M_0}^{\ell}}^2,$$
and similarly
$$\vvvert \phi_0^\eps\vvvert_{\ell+1,T}^2=\|\phi_0\|_{\H_{M_0}^{\ell+1}}^2,$$
so that our constraint on $M$ only depends on
$\|\phi_0\|_{\H_{M_0}^{\ell+1}}$ and $\|a_0\|_{\H_{M_0}^{\ell}}$. 
 Then, for $T<M_0/M$, the above inequalities yield, by induction, for all $j\ge 1$,
\begin{align*}
\vvvert\phi_{j}^\eps\vvvert_{\ell+1,T}^2&\le 2\|\phi_0\|_{\H_{M_0}^{\ell+1}}^2+\frac{2C^2}{M^2}\(\frac{4}{3}\| a_0\|^2_{\H_{M_0}^\ell}\)^{2\sigma}\le 2\|\phi_0\|_{\H_{M_0}^{\ell+1}}^2+\| a_0\|_{\H_{M_0}^\ell}^{4\sigma},\\
\vvvert a_{j}^\eps\vvvert_{\ell,T}^2&\le \frac{4}{3}\|a_0\|_{\H_{M_0}^{\ell}}^2.
\end{align*}
\smallbreak

\noindent {\bf Second step: the sequence converges.} For $j\ge
1$, consider the difference of two successive iterates: in the case of
the phase, we have
\begin{equation*}
  \d_t (\phi_{j+1}^\eps - \phi_{j}^\eps ) + \frac{1}{2}\(\nabla \phi^\eps_j\cdot
      \nabla\phi^\eps_{j+1} - \nabla \phi^\eps_{j-1}\cdot
      \nabla\phi^\eps_{j}\)
  +f(|a_j^\eps|^2)-f(|a_{j-1}^\eps|^2)=0, 
\end{equation*}
along with zero initial data. Inserting the term $|\nabla
\phi_j^\eps|^2$, and denoting by $\delta\phi_{j+1}^\eps =
\phi_{j+1}^\eps - \phi_{j}^\eps $, we can rewrite the above equation as
\begin{equation*}
  \d_t \delta\phi_{j+1}^\eps+ \frac{1}{2}\(\nabla \phi^\eps_j\cdot
      \nabla \delta \phi^\eps_{j+1}  + \nabla \delta \phi^\eps_j\cdot
      \nabla\phi^\eps_{j}\) +f(|a_j^\eps|^2)-f(|a_{j-1}^\eps|^2)=0.
\end{equation*}
\eqref{eq:evol-norm} yields, along with Cauchy-Schwarz inequality as
in the first step of the proof of Proposition~\ref{prop:local}:
\begin{align*}
   \vvvert  \delta\phi_{j+1}^\eps\vvvert_{\ell+1,T}^2 &\le \int_0^T  \|\delta
    \phi_{j+1}^\eps(t)\|_{\H_{\rho(t)}^{\ell+3/2}}   \| \nabla \phi^\eps_j\cdot
      \nabla \delta \phi^\eps_{j+1} \|_{\H_{\rho(t)}^{\ell+1/2}}dt\\
&\quad + \int_0^T \|\delta
    \phi_{j+1}^\eps(t)\|_{\H_{\rho(t)}^{\ell+3/2}}   \| \nabla \delta \phi^\eps_j\cdot
      \nabla\phi^\eps_{j}\|_{\H_{\rho(t)}^{\ell+1/2}} dt\\
&\quad +2\int_0^T  \|\delta
    \phi_{j+1}^\eps(t)\|_{\H_{\rho(t)}^{\ell+3/2}}
  \|f(|a_j^\eps|^2)-f(|a_{j-1}^\eps|^2) \|_{\H_{\rho(t)}^{\ell+1/2}}
  dt. 
\end{align*}
The first two terms are estimated thanks to the last point in
Lemma~\ref{lem:tame}, as in \eqref{eq:produit-grad}.
\smallbreak

Since $f(|z|^2)$ is a polynomial in $(z,\bar z)$,  the last
 point of Lemma~\ref{lem:tame} yields
 \begin{align*}
   &\|f(|a_j^\eps|^2)-  f(|a_{j-1}^\eps|^2)\|_{\H_\rho^{\ell+1/2}}\le C
   \(\|a_j^\eps\|_{\H_\rho^{\ell}}^{2\si-2} +
   \|a_{j-1}^\eps\|_{\H_\rho^{\ell}}^{2\si-2} \) \times \\
\qquad &\times\(
   \(\|a_j^\eps\|_{\H_\rho^{\ell}}+
   \|a_{j-1}^\eps\|_{\H_\rho^{\ell}}\)\|\delta
   a_j^\eps\|_{\H_\rho^{\ell+1/2}} + \(\|a_j^\eps\|_{\H_\rho^{\ell+1/2}}+
   \|a_{j-1}^\eps\|_{\H_\rho^{\ell+1/2}}\)\|\delta
   a_j^\eps\|_{\H_\rho^{\ell}}\) .
 \end{align*}
We conclude:
\begin{equation*}
    \vvvert  \delta\phi_{j+1}^\eps\vvvert_{\ell+1,T}^2  \le 
\frac{K}{M}\(  \vvvert  \delta\phi_{j+1}^\eps\vvvert_{\ell+1,T}^2 
+\vvvert  \delta\phi_{j}^\eps\vvvert_{\ell+1,T}^2 +
 \vvvert  \delta a_{j}^\eps\vvvert_{\ell,T}^2 \),
\end{equation*}
where $K$ stems from the first step. For $M$ sufficiently large,
\begin{equation*}
    \vvvert  \delta\phi_{j+1}^\eps\vvvert_{\ell+1,T}^2  \le 
\frac{2K}{M}\(  \vvvert  \delta\phi_{j}^\eps\vvvert_{\ell+1,T}^2 +
 \vvvert  \delta a_{j}^\eps\vvvert_{\ell,t}^2 \).
\end{equation*}
Similarly,  $\delta a_{j+1}^\eps$ solves
\begin{align*}
  \d_t \delta a_{j+1}^\eps + \nabla \phi_j^\eps\cdot \nabla \delta
  a_{j+1}^\eps + \nabla \delta \phi_j^\eps\cdot \nabla a_j^\eps +
  \frac{1}{2}\delta a_{j+1}^\eps\Delta \phi_j^\eps +
  \frac{1}{2}a_j^\eps\Delta \delta \phi_j^\eps = i\frac{\eps}{2}\Delta
  \delta a_{j+1}^\eps. 
\end{align*}
The last term is skew-symmetric, and thus does not appear in energy
estimates. Resuming the same estimates as in the proof of
Lemma~\ref{lem:evol-norm}, we come up with:
\begin{equation*}
    \vvvert  \delta a_{j+1}^\eps\vvvert_{\ell,T}^2  \le 
\frac{K}{M}\(  \vvvert  \delta a_{j+1}^\eps\vvvert_{\ell,T}^2 
+\vvvert  \delta\phi_{j}^\eps\vvvert_{\ell+1,T}^2 +
 \vvvert  \delta a_{j}^\eps\vvvert_{\ell,T}^2 \),
\end{equation*}
hence
\begin{equation*}
    \vvvert  \delta a_{j+1}^\eps\vvvert_{\ell,T}^2  \le 
\frac{2K}{M}\(  
\vvvert  \delta\phi_{j}^\eps\vvvert_{\ell+1,T}^2 +
 \vvvert  \delta a_{j}^\eps\vvvert_{\ell,T}^2 \),
\end{equation*}
up to increasing $M$ (hence decreasing $T$). For $M$ possibly even
larger, we infer that the series 
\begin{equation*}
  \sum_{j\ge 0} \( \vvvert
\phi_{j+1}^\eps-\phi_j^\eps\vvvert_{\ell+1,T}+\vvvert
a_{j+1}^\eps - a_j^\eps\vvvert_{\ell,T}\)
\end{equation*}
converges geometrically.
Uniqueness is a direct consequence of the estimates used in this
second step. \eqref{eq:bound} is obtained by letting $j$ go to infinity in the estimates at the end of the first step.
\smallbreak

The Lipschitzean property of the flow follows from calculations similar to those of the second step of the proof.  
\end{proof}

\section{Bounds on the numerical solution}
\label{sec:bounds}

\begin{proposition}\label{prop:evol-syst}
  Let $\lambda\in \R$, $\si\in \N$, and let $(\phi^\eps,a^\eps)$ be the
  solution of either of the systems \eqref{eq:syst-grenier},
  \eqref{eq:linear-syst}, or \eqref{eq:Y-syst}, with the notation
  $f(|z|^2)=\lambda |z|^{2\si}$. Let $s>d/2+1$, $\mu>0$, and $\ell\ge
  s$. Suppose that
  $(\phi^\eps,a^\eps)$  satisfies
  \begin{equation*}
   (\phi^\eps,a^\eps) \in 
C([0,T],\H_\rho^{s+1}\times \H_\rho^{s}),
  \end{equation*}
where $\rho(t)=M_0-Mt$ and $0<T<M_0/M$, with
\begin{equation*}
 \sup_{t\in [0,T]}\|\phi^\eps(t)\|_{\H_{\rho(t)}^{s+1}}+
\sup_{t\in [0,T]}\|a^\eps(t)\|_{\H_{\rho(t)}^{s}}\le \mu. 
\end{equation*}
Then, up to increasing $M$ (and decreasing $T$), 
\begin{equation*}
  \|\phi^\eps(t)\|_{\H_{\rho(t)}^{\ell+1}} +
  \|a^\eps(t)\|_{\H_{\rho(t)}^{\ell}}\le  \|\phi^\eps(0)\|_{\H_{M_0}^{\ell+1}} +
  \|a^\eps(0)\|_{\H_{M_0}^{\ell}},\quad \forall t\in [0,T].
\end{equation*}
\end{proposition}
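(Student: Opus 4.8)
The plan is to run, in the analytic scale, a higher-order energy estimate at the indices $\ell+1$ for the phase and $\ell$ for the amplitude, using the identity \eqref{eq:evol-norm} together with the tame estimates of Lemma~\ref{lem:tame}, and to let the decay of the analytic radius do the work: since $\rho(t)=M_0-Mt$ has $\dot\rho\equiv-M$, the term $2\dot\rho\|\cdot\|_{\H_\rho^{\cdot+1/2}}^2$ in \eqref{eq:evol-norm} is a damping term of size $\sim M$, and for $M$ large it will absorb all the nonlinear and transport contributions, each of which by hypothesis carries a factor that is controlled by $\mu$.

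Concretely, I would treat \eqref{eq:syst-grenier} and \eqref{eq:linear-syst} at once by writing $\d_t\phi=-\tfrac12|\nabla\phi|^2-\lambda|a^\eps|^{2\si}$ (the last term absent for \eqref{eq:linear-syst}) and $\d_t a^\eps=-\nabla\phi\cdot\nabla a^\eps-\tfrac12 a^\eps\Delta\phi+\tfrac{i\eps}{2}\Delta a^\eps$, and applying \eqref{eq:evol-norm}:
\begin{align*}
  \d_t\|\phi\|_{\H_\rho^{\ell+1}}^2-2\dot\rho\|\phi\|_{\H_\rho^{\ell+3/2}}^2
  &=-\RE\<\phi,|\nabla\phi|^2\>_{\H_\rho^{\ell+1}}-2\lambda\RE\<\phi,|a^\eps|^{2\si}\>_{\H_\rho^{\ell+1}},\\
  \d_t\|a^\eps\|_{\H_\rho^{\ell}}^2-2\dot\rho\|a^\eps\|_{\H_\rho^{\ell+1/2}}^2
  &=-2\RE\<a^\eps,\nabla\phi\cdot\nabla a^\eps\>_{\H_\rho^{\ell}}-\RE\<a^\eps,a^\eps\Delta\phi\>_{\H_\rho^{\ell}},
\end{align*}
where the Laplacian term has dropped because $\RE\<a^\eps,i\Delta a^\eps\>_{\H_\rho^\ell}=0$. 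Each right-hand side is estimated by a half-derivative Cauchy--Schwarz split, as in the proof of Lemma~\ref{lem:evol-norm} (e.g. $|\<\phi,g\>_{\H_\rho^{\ell+1}}|\le\|\phi\|_{\H_\rho^{\ell+3/2}}\|g\|_{\H_\rho^{\ell+1/2}}$), followed by \eqref{eq:tame} (or points~1--2 of Lemma~\ref{lem:tame}), always putting the surplus regularity on one factor and the \emph{low} norm on the other: $\|\,|\nabla\phi|^2\,\|_{\H_\rho^{\ell+1/2}}\le C\|\phi\|_{\H_\rho^{s+1}}\|\phi\|_{\H_\rho^{\ell+3/2}}\le C\mu\|\phi\|_{\H_\rho^{\ell+3/2}}$; $\|\,|a^\eps|^{2\si}\,\|_{\H_\rho^{\ell+1/2}}\le C\mu^{2\si-1}\|a^\eps\|_{\H_\rho^{\ell+1/2}}$ by the induction already carried out for Lemma~\ref{lem:evol-norm}; and $\|\nabla\phi\cdot\nabla a^\eps\|_{\H_\rho^{\ell-1/2}}$, $\|a^\eps\Delta\phi\|_{\H_\rho^{\ell-1/2}}$ are bounded by $C\mu\bigl(\|\phi\|_{\H_\rho^{\ell+3/2}}+\|a^\eps\|_{\H_\rho^{\ell+1/2}}\bigr)$. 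This is where the hypothesis $s>d/2+1$, rather than merely $s>d/2$, is used: after a gradient hits a factor that one wishes to keep in the low norm, the low index drops to $s-1$, which must still exceed $d/2$ for \eqref{eq:tame} to apply.

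Summing the two estimates and using Young's inequality on the cross terms, with $\mathcal N(t):=\|\phi^\eps(t)\|_{\H_{\rho(t)}^{\ell+1}}^2+\|a^\eps(t)\|_{\H_{\rho(t)}^{\ell}}^2$ and $\mathcal D(t):=\|\phi^\eps(t)\|_{\H_{\rho(t)}^{\ell+3/2}}^2+\|a^\eps(t)\|_{\H_{\rho(t)}^{\ell+1/2}}^2$, I get
\begin{equation*}
  \dot{\mathcal N}(t)\le\bigl(2\dot\rho(t)+C(\lambda,\si)\,\gamma(\mu)\bigr)\,\mathcal D(t)
  =\bigl(-2M+C(\lambda,\si)\gamma(\mu)\bigr)\,\mathcal D(t),
\end{equation*}
for an explicit function $\gamma$; note $\mathcal N\le\mathcal D$ since a lower regularity index gives a smaller norm. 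Choosing $M\ge\tfrac12 C(\lambda,\si)\gamma(\mu)$ (which forces $T<M_0/M$ to shrink) makes the bracket nonpositive, so $\mathcal N$ is nonincreasing on $[0,T]$ and $\mathcal N(t)\le\mathcal N(0)=\|\phi^\eps(0)\|_{\H_{M_0}^{\ell+1}}^2+\|a^\eps(0)\|_{\H_{M_0}^{\ell}}^2$, which yields the claimed bound (up to a harmless absolute constant when passing from the sum of squares to the sum, or, at the cost of a standard regularization of $\sqrt{\cdot}$, by running the same computation directly on $(\|\phi\|_{\H_\rho^{\ell+1}}+\|a\|_{\H_\rho^{\ell}})^2$, which recovers the stated constant). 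The case \eqref{eq:Y-syst} is easier: $a^\eps$ is constant, so $\|a^\eps(t)\|_{\H_{\rho(t)}^{\ell}}\le\|a^\eps(0)\|_{\H_{M_0}^{\ell}}$ directly since $\dot\rho<0$, and the phase estimate reduces to the single $|a^\eps|^{2\si}$ term, treated as above.

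Two routine points must be dispatched first. One needs the quantities in the energy estimate to be finite, i.e. $(\phi^\eps,a^\eps)\in C([0,T],\H_\rho^{\ell+1}\times\H_\rho^{\ell})$; assuming (otherwise the statement is vacuous) that $\phi^\eps(0)\in\H_{M_0}^{\ell+1}$ and $a^\eps(0)\in\H_{M_0}^{\ell}$, this follows either by mollifying the data and passing to the limit, or by a standard continuation argument, the required $L^2_{\mathrm{loc}}$ control of $\mathcal D$ being exactly of the type built into \eqref{eq:triple}. The second point is pure bookkeeping: checking that every product on the two right-hand sides admits a loss-free split whose low index exceeds $d/2$ after the one or two derivatives it carries. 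That bookkeeping is in fact the only delicate step; once it is in place, the damping from $\dot\rho=-M$ closes the estimate automatically, with an explicit dependence of $M$ on $\mu$ (and on $\lambda,\si$) and the existence time $T<M_0/M$.
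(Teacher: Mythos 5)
Your proof is correct and takes essentially the same route as the paper: view \eqref{eq:evol-norm} as a parabolic estimate with diffusive coefficient $-\dot\rho=M$, bound the nonlinear and transport terms at level $\ell+1$ (resp.\ $\ell$) by $C(\mu+\mu^{2\si-1})$ times the half-derivative-higher norms via the tame estimates of Lemma~\ref{lem:tame}, and absorb them into the damping by taking $M$ large. The only cosmetic difference is that you explicitly flag the harmless $\sqrt{2}$ when passing from the sum-of-squares inequality to the stated sum, and your pinpointing of where $s>d/2+1$ is used is slightly off (it is the two derivatives in $a^\eps\Delta\phi^\eps$, not a single gradient, that forces the low index to $s-1$), but the substance is identical.
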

Note that the assumption carries over a regularity at level $s>d/2+1$,
while the conclusion addresses the regularity at level $\ell\ge s$:
the above proposition may be viewed as a tame estimate result. 
\begin{proof}
  First, remark that $\vvvert \phi^\eps(T)\vvvert_{s+1}+\vvvert
a^\eps(T)\vvvert_{s}$ is a non-increasing function of $M$, provided that the
constraint $T<M_0/M$ remains fulfilled. 
\smallbreak

Second, note that it suffices to establish the result in the case of
\eqref{eq:syst-grenier}, since the other systems contain fewer terms,
and we will estimate each term present in \eqref{eq:syst-grenier}. 
\smallbreak

The idea of the result is then to view \eqref{eq:evol-norm} as a
parabolic estimate, with diffusive coefficient $-\dot \rho=M$. Indeed,
like in the proof of Lemma~\ref{lem:evol-norm}, we have
\begin{align*}
  & \d_t \|\phi^\eps\|_{\H_\rho^{\ell+1}}^2 + 2M
   \|\phi^\eps\|_{\H_\rho^{\ell+3/2}}^2\le
  C\|\phi^\eps\|_{\H_{\rho}^{\ell+3/2}}\( \|\nabla\phi^\eps\cdot \nabla
  \phi^\eps\|_{\H_{\rho}^{\ell+1/2}} + \|
    |a^\eps|^{2\si}\|_{\H_{\rho}^{\ell+1/2}} \),\\
& \d_t \|a^\eps\|_{\H_\rho^{\ell}}^2 + 2M
   \|a^\eps\|_{\H_\rho^{\ell+1/2}}^2 \le
  C\|a^\eps\|_{\H_{\rho}^{\ell+1/2}}\( \|\nabla\phi^\eps\cdot \nabla
  a^\eps\|_{\H_{\rho}^{\ell-1/2}} + \|
    a^\eps\Delta \phi^\eps\|_{\H_{\rho}^{\ell-1/2}} \).
\end{align*}
We then invoke Lemma~\ref{lem:tame} once more. Since the first two
points in Lemma~\ref{lem:tame} involve the constraint $k,s\ge m$, we
can rely only on \eqref{eq:tame}. We have
\begin{align*}
  \|\nabla\phi^\eps\cdot \nabla
  \phi^\eps\|_{\H_{\rho}^{\ell+1/2}} \le C \|\nabla
  \phi^\eps\|_{\H_{\rho}^{\ell+1/2}} \|\nabla
  \phi^\eps\|_{\H_{\rho}^{s-1}} \le C \|
  \phi^\eps\|_{\H_{\rho}^{\ell+3/2}} \| \phi^\eps\|_{\H_{\rho}^{s}}  ,
\end{align*}
since $s>d/2+1$. We have already used the estimate
\begin{equation*}
  \|  |a^\eps|^{2\si}\|_{\H_{\rho}^{\ell+1/2}} \le C
    \|a^\eps\|_{\H_\rho^s}^{2\si-1}\|a^\eps\|_{\H_{\rho}^{\ell+1/2}},
\end{equation*}
so that Young inequality yields
\begin{equation*}
  \d_t \|\phi^\eps\|_{\H_\rho^{\ell+1}}^2 + 2M
   \|\phi^\eps\|_{\H_\rho^{\ell+3/2}}^2\le
  C \(\mu +\mu^{2\si-1}\)\(\|
  \phi^\eps\|_{\H_{\rho}^{\ell+3/2}}^2 +
  \|a^\eps\|_{\H_{\rho}^{\ell+1/2}}^2\). 
\end{equation*}
Again, \eqref{eq:tame} yields
\begin{align*}
  \|\nabla\phi^\eps\cdot \nabla
  a^\eps\|_{\H_{\rho}^{\ell-1/2}}  &\le C\(
  \|\nabla\phi^\eps\|_{\H_{\rho}^{\ell-1/2}}
\| \nabla 
  a^\eps\|_{\H_{\rho}^{s-1}}+\|\nabla\phi^\eps\|_{\H_{\rho}^{s-1}}
\|\nabla
  a^\eps\|_{\H_{\rho}^{\ell-1/2}}\)\\
&\le C\(
  \|\phi^\eps\|_{\H_{\rho}^{\ell+1/2}}
\|  a^\eps\|_{\H_{\rho}^{s}}+\|\phi^\eps\|_{\H_{\rho}^{s}}
\|
  a^\eps\|_{\H_{\rho}^{\ell+1/2}}\),
\end{align*}
and
\begin{align*}
   \|a^\eps\Delta\phi^\eps\|_{\H_{\rho}^{\ell-1/2}}  &\le C\(
  \|\Delta\phi^\eps\|_{\H_{\rho}^{\ell-1/2}}
\| 
  a^\eps\|_{\H_{\rho}^{s-1}}+\|\Delta\phi^\eps\|_{\H_{\rho}^{s-1}}
\| a^\eps\|_{\H_{\rho}^{\ell-1/2}}\)\\
&\le C\(
  \|\phi^\eps\|_{\H_{\rho}^{\ell+3/2}}
\|  a^\eps\|_{\H_{\rho}^{s}}+\|\phi^\eps\|_{\H_{\rho}^{s+1}}
\|
  a^\eps\|_{\H_{\rho}^{\ell+1/2}}\).
\end{align*}
We come up with
\begin{align*}
  \d_t \( \|\phi^\eps\|_{\H_\rho^{\ell+1}}^2
  +\|a^\eps\|_{\H_\rho^{\ell}}^2 \) +& 2M\( \|
  \phi^\eps\|_{\H_{\rho}^{\ell+3/2}}^2 +
  \|a^\eps\|_{\H_{\rho}^{\ell+1/2}}^2\) \\
&\le C\(\mu +\mu^{2\si-1}\) \(\|
  \phi^\eps\|_{\H_{\rho}^{\ell+3/2}}^2 +
  \|a^\eps\|_{\H_{\rho}^{\ell+1/2}}^2\).
\end{align*}
Choosing $2M \ge C\(\mu +\mu^{2\si-1}\)$ thus yields the result. 
\end{proof}
We readily infer:
\begin{corollary}\label{cor:borneZ}
  Let $\ell\ge s>d/2+1$, and $\tau>0$. Suppose that the numerical
  solution
  \begin{equation*}
    {\mathcal Z}_\eps^t
\begin{pmatrix}
  \phi_0^\eps \\
a_0^\eps
\end{pmatrix} =
\begin{pmatrix}
  \phi_t^\eps\\
a_t^\eps
\end{pmatrix}
  \end{equation*}
satisfies
\begin{equation*}
 \sup_{t\in [0,\tau]}\|\phi^\eps_t\|_{\H_{\rho(t)}^{s+1}}+
\sup_{t\in [0,\tau]}\|a^\eps_t\|_{\H_{\rho(t)}^{s}}\le \mu,
\end{equation*}
where $\rho(t)=M_0-Mt$. Then, up to increasing $M$ (and possibly
decreasing $\tau$), 
\begin{equation*}
  \|\phi^\eps_t\|_{\H_{\rho(t)}^{\ell+1}} +
  \|a^\eps_t\|_{\H_{\rho(t)}^{\ell}}\le  \|\phi^\eps_0\|_{\H_{M_0}^{\ell+1}} +
  \|a^\eps_0\|_{\H_{M_0}^{\ell}},\quad \forall t\in [0,\tau].
\end{equation*}
\end{corollary}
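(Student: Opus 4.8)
\emph{Proof plan.} The strategy is to exploit the factorization $\mathcal{Z}_\eps^t=\mathcal{Y}_\eps^t\mathcal{X}_\eps^t$ and to apply Proposition~\ref{prop:evol-syst} once on each of the two legs, which solve \eqref{eq:linear-syst} and \eqref{eq:Y-syst} respectively --- precisely the two auxiliary systems that Proposition~\ref{prop:evol-syst} covers, and for which the standing hypotheses $s>d/2+1$ and $\ell\ge s$ are exactly what is needed. Write $(\psi^\eps_\theta,b^\eps_\theta)=\mathcal{X}_\eps^\theta(\phi_0^\eps,a_0^\eps)$ for the trajectory of the first leg, so that $(\phi_t^\eps,a_t^\eps)=\mathcal{Y}_\eps^t(\psi^\eps_t,b^\eps_t)$. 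I would distribute the weight evenly between the two legs: on the $\mathcal{X}$-leg let $\rho$ decrease with slope $M/2$ from $M_0$ to $M_0-Mt/2$, and on the subsequent $\mathcal{Y}$-leg with slope $M/2$ from $M_0-Mt/2$ down to $M_0-Mt=\rho(t)$; this is legitimate because the proof of Proposition~\ref{prop:evol-syst} uses only the positivity of $\rho$ and the size of its negative slope, not the actual value of the reference weight. Replacing $M$ by $2M$ if necessary, the slope $M/2$ remains as large as both applications require. Note that it is precisely the decreasing weight that allows the $\mathcal{Y}$-leg solution to belong to the space $C([0,t],\H_\rho^{s+1}\times\H_\rho^{s})$ required there, since along that leg the phase gains the term $\theta\lambda|b^\eps|^{2\si}$, a priori only as regular as $b^\eps$, i.e. one level below the target regularity for phases.

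The one delicate point is supplying, as input to each leg, the low-regularity bound $\sup_{\theta\in[0,t]}\|\phi^\eps\|_{\H_\rho^{s+1}}+\sup_{\theta\in[0,t]}\|a^\eps\|_{\H_\rho^{s}}\le\mu'$ demanded by Proposition~\ref{prop:evol-syst}. Because the intermediate state lives at the larger weight $M_0-Mt/2>\rho(t)$, this bound cannot be read off the hypothesis on the output $(\phi_t^\eps,a_t^\eps)$; instead I would use the $t=0$ instance of that hypothesis, $\|\phi_0^\eps\|_{\H_{M_0}^{s+1}}+\|a_0^\eps\|_{\H_{M_0}^{s}}\le\mu$, together with the local well-posedness of \eqref{eq:linear-syst} in the analytic scale. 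The latter is the easier analogue of Proposition~\ref{prop:local}: applying Lemma~\ref{lem:evol-norm} with $\varphi=\phi^\eps$ and $b\equiv 0$ and running the same bootstrap as in the first step of the proof of Proposition~\ref{prop:local} gives, for $M$ large and $\tau$ small, $\sup_{\theta\in[0,\tau]}(\|\psi^\eps_\theta\|_{\H_{\rho(\theta)}^{s+1}}+\|b^\eps_\theta\|_{\H_{\rho(\theta)}^{s}})\le C\mu$ with $C$ independent of $\eps$ (the $\eps$-term being skew-adjoint, it drops out of the energy identity). For the $\mathcal{Y}$-leg the flow is explicit --- the amplitude is frozen and the phase is affine in time --- so its amplitude norm equals that of $b^\eps$, while the added term $\theta\lambda|b^\eps|^{2\si}$ is controlled via \eqref{eq:tame} and the $O((M\theta)^{-1})$ smoothing gained when passing from $\H^{s}$ to $\H^{s+1}$ along the decreasing weight, contributing $\le C\mu^{2\si}/M$. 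Hence both legs satisfy the low-regularity bound with a common $\mu'=\mu'(\mu,\si,\lambda)$ independent of $\eps$.

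With these bounds in hand, Proposition~\ref{prop:evol-syst} applies on each leg at the level $\ell\ge s$: on the $\mathcal{X}$-leg it yields $\|\psi^\eps_t\|_{\H_{M_0-Mt/2}^{\ell+1}}+\|b^\eps_t\|_{\H_{M_0-Mt/2}^{\ell}}\le\|\phi_0^\eps\|_{\H_{M_0}^{\ell+1}}+\|a_0^\eps\|_{\H_{M_0}^{\ell}}$, and on the $\mathcal{Y}$-leg it yields $\|\phi_t^\eps\|_{\H_{\rho(t)}^{\ell+1}}+\|a_t^\eps\|_{\H_{\rho(t)}^{\ell}}\le\|\psi^\eps_t\|_{\H_{M_0-Mt/2}^{\ell+1}}+\|b^\eps_t\|_{\H_{M_0-Mt/2}^{\ell}}$. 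Concatenating the two inequalities gives the assertion for every $t\in[0,\tau]$, after replacing $M$ by the largest and $\tau$ by the smallest of the finitely many thresholds produced above. I expect the main obstacle to be exactly the low-regularity control along the $\mathcal{X}$-leg: Proposition~\ref{prop:evol-syst} only \emph{propagates} a bound from $\H^{s+1}\times\H^{s}$ to $\H^{\ell+1}\times\H^{\ell}$, so one cannot close it at $\ell=s$ within the same statement and must instead invoke the elementary local theory for \eqref{eq:linear-syst} as above --- everything else being a matter of careful weight bookkeeping and tame estimates already available in Lemmas~\ref{lem:tame} and~\ref{lem:evol-norm}.
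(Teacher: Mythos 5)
Your proof is correct and matches the route the paper implicitly has in mind: since the paper only says ``we readily infer,'' the intended argument is exactly to factor $\mathcal Z_\eps^t=\mathcal Y_\eps^t\mathcal X_\eps^t$ and apply Proposition~\ref{prop:evol-syst} once on each leg, splitting the weight budget $M_0\to\rho(t)$ between them. You correctly identify and fill the one real gap hiding in ``readily'': Proposition~\ref{prop:evol-syst} needs a low-regularity a priori bound along \emph{each} intermediate trajectory, which is not the same object as the stated hypothesis on the $\mathcal Z$-output. Your fix --- Proposition~\ref{prop:local} with $\lambda=0$ for the $\mathcal X$-leg, then the explicit $\mathcal Y$-formula plus the $O\bigl((M\theta)^{-1}\bigr)$ gain of one derivative along a strictly decreasing weight to absorb $\theta\lambda|b^\eps|^{2\sigma}$ --- is sound and uniform in $\eps$. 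One small remark worth noting: as you set it up, only the $t=0$ instance of the hypothesis is actually used (the rest of the leg bounds come from local theory and the explicit $\mathcal Y$ map), whereas the paper's statement and its subsequent use in Section~\ref{sec:lady} keep the bound over all of $[0,\tau]$; that version would instead read off the $\mathcal X$-leg bound from the hypothesis via $\mathcal X_\eps^\theta=(\mathcal Y_\eps^\theta)^{-1}\mathcal Z_\eps^\theta$ with the same weight-smoothing trick. Both variants are valid, so this is a stylistic difference, not a discrepancy.
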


\section{Local error estimate}
\label{sec:local-error}

We resume the computations from \cite{CaSINUM}, based on the general
formula established  in \cite{DeTh13}. For a
possibly nonlinear operator $A$, we denote by $\E_A$ the associated flow:
\begin{equation*}
  \d_t \E_A(t,v)=A\(\E_A(t,v)\);\quad \E_A(0,v)=v.
\end{equation*}

\begin{theorem}[Theorem~1 from \cite{DeTh13}]\label{theo:error}
 Suppose that $F(u)=A(u)+B(u)$, and denote by
 \begin{equation*}
   {\mathcal S}^t(u)= \E_F\(t,u\)\text{ and }{\mathcal Z}^t(u)
   =\E_B\(t,\E_A(t,u)\)
 \end{equation*}
the exact flow and the Lie-Trotter flow, respectively. 
 Let $\L (t,u) = {\mathcal Z}^t(u)-{\mathcal S}^t(u)$. We have the exact formula
  \begin{align*}
    \L(t,u) =\int_0^t
    \int_0^{\tau_1}\d_2\E_F& \(t-\tau_1,{\mathcal Z}^{\tau_1}(u)\) \d_2
    \E_B\(\tau_1-\tau_2,\E_A(\tau_1,u)\) \\
&\times [B,A]\(\E_B\(\tau_2,\E_A\(\tau_1,u\)\)\)d\tau_2d\tau_1,
  \end{align*}
where $[B,A](v)=B'(v)A(v)-A'(v)B(v)$.
\end{theorem}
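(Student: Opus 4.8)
The plan is to derive the formula by iterating the variation-of-constants (Duhamel-type) identity twice, tracking the difference between the exact and split flows through the commutator.

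First I would introduce the one-parameter family interpolating between the two flows. Fix $t$ and $u$, and for $0\le\tau\le t$ set
\begin{equation*}
  g(\tau) = \E_F\bigl(t-\tau,\;\E_B(\tau,\E_A(\tau,u))\bigr).
\end{equation*}
Then $g(0)=\E_F(t,u)={\mathcal S}^t(u)$ and $g(t)=\E_B(t,\E_A(t,u))={\mathcal Z}^t(u)$, so $\L(t,u)=g(t)-g(0)=\int_0^t g'(\tau_1)\,d\tau_1$. Differentiating and using $\partial_\tau\E_A(\tau,u)=A(\E_A(\tau,u))$, $\partial_\tau\E_B(\tau,v)=B(\E_B(\tau,v))$, and $\partial_1\E_F(s,w)=F(\E_F(s,w))=A(\cdot)+B(\cdot)$ evaluated at $\E_F(s,w)$, the $B$-contribution from the flow $\E_B$ cancels the $B$-part of $F$, while the inner $A$-flow pushes an extra term through the derivative $\partial_2\E_B$. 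Carefully, writing $w(\tau)=\E_B(\tau,\E_A(\tau,u))$ one gets
\begin{equation*}
  g'(\tau_1) = \partial_2\E_F\bigl(t-\tau_1,w(\tau_1)\bigr)\,\partial_2\E_B\bigl(\tau_1,\E_A(\tau_1,u)\bigr)\bigl[A\bigl(\E_A(\tau_1,u)\bigr)\bigr]
  - \partial_2\E_F\bigl(t-\tau_1,w(\tau_1)\bigr)\,A\bigl(w(\tau_1)\bigr),
\end{equation*}
which is the quantity one must reorganize into a commutator.

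The key step is then to recognize that the bracketed difference, once $\partial_2\E_F(t-\tau_1,w(\tau_1))$ is factored out, equals a second Duhamel integral in the inner variable. Introduce $h(\tau_2)=\partial_2\E_B\bigl(\tau_1-\tau_2,\E_B(\tau_2,\E_A(\tau_1,u))\bigr)\,A\bigl(\E_B(\tau_2,\E_A(\tau_1,u))\bigr)$ for $0\le\tau_2\le\tau_1$: at $\tau_2=0$ this is $\partial_2\E_B(\tau_1,\E_A(\tau_1,u))\,[A(\E_A(\tau_1,u))]$, at $\tau_2=\tau_1$ it is $A(w(\tau_1))$, so $h(0)-h(\tau_1)=-\int_0^{\tau_1}h'(\tau_2)\,d\tau_2$. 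Differentiating $h$ one uses the two facts that $\partial_{\tau_2}\E_B(\tau_2,\cdot)=B(\E_B(\tau_2,\cdot))$ and that $s\mapsto\partial_2\E_B(s,v)$ solves the linearized equation $\partial_s\partial_2\E_B(s,v)=B'(\E_B(s,v))\,\partial_2\E_B(s,v)$; the $B'$ terms combine via the chain rule into precisely $-\partial_2\E_B(\tau_1-\tau_2,\cdot)\bigl(A'B-B'A\bigr)\bigl(\E_B(\tau_2,\E_A(\tau_1,u))\bigr)$, i.e. the commutator $[B,A]=B'A-A'B$ appears with the stated sign. Substituting back and writing $w(\tau_1)=\E_B(\tau_1,\E_A(\tau_1,u))$ and relabeling $\partial_2\E_F$ as $\d_2\E_F$ gives the announced identity.

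The main obstacle is the bookkeeping in the second step: one must correctly differentiate the composition $\partial_2\E_B(\tau_1-\tau_2,\E_B(\tau_2,v))$, in which $\tau_2$ appears both in the ``time-to-go'' slot (producing a $-B'$ factor from the linearized-flow equation, with a minus sign from $\tau_1-\tau_2$) and in the base point $\E_B(\tau_2,v)$ (producing a $B$ inside, hence a $+B'$ factor after differentiating $\partial_2\E_B$ in its second argument), and check that the two contributions do \emph{not} cancel but rather assemble into $A'B-B'A$ acting after the factor $A$ coming from $h$. This is the only place where the group/semigroup composition property of the nonlinear flow $\E_B$ and the fundamental identity $\partial_2\bigl(\E_B(s_1,\E_B(s_2,v))\bigr)=\partial_2\E_B(s_1,\E_B(s_2,v))\partial_2\E_B(s_2,v)$ are genuinely used; once it is settled, everything else is the elementary fundamental theorem of calculus applied twice. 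I would also remark that no mapping properties or estimates are needed here — this is a purely formal (exact) identity valid whenever all the flows and their derivatives exist — so the proof is essentially the two nested Duhamel expansions above.
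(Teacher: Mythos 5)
The paper does not prove this result---it is quoted from \cite{DeTh13}---so there is no internal proof to compare against; the question is whether your argument is sound. The plan (interpolate via $g(\tau)=\E_F(t-\tau,\E_B(\tau,\E_A(\tau,u)))$, integrate $g'$ over $[0,t]$, then expand the bracketed difference in $g'(\tau_1)$ through a second fundamental-theorem-of-calculus identity in $\tau_2$) is indeed the standard route and the right one. One remark on the first step: the asserted cancellation of $B$-terms requires the identity $F(\E_F(s,w))=\d_2\E_F(s,w)\,F(w)$ (the flow intertwines its own vector field with its tangent flow); $B(\E_F(s,w))$ does not cancel against $\d_2\E_F(s,w)B(w)$ term by term without it. Your resulting displayed formula for $g'(\tau_1)$ is nonetheless correct.

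In the second step there is a sign error and a base-point discrepancy you should have caught. Set $v=\E_A(\tau_1,u)$, $z(\tau_2)=\E_B(\tau_2,v)$ and $\Phi(\tau_2)=\d_2\E_B(\tau_1-\tau_2,z(\tau_2))$. The group law $\E_B(\tau_1,v)=\E_B(\tau_1-\tau_2,\E_B(\tau_2,v))$ gives $\Phi(\tau_2)\,\d_2\E_B(\tau_2,v)=\d_2\E_B(\tau_1,v)$, hence $\Phi'=-\Phi\,B'(z)$, and so
\begin{equation*}
h'(\tau_2)=\Phi'\,A(z)+\Phi\,A'(z)B(z)=\Phi\,\bigl(A'B-B'A\bigr)(z)=-\Phi\,[B,A](z),
\end{equation*}
with a plus sign in front of $(A'B-B'A)$, not the minus you wrote; with your sign, $h(0)-h(\tau_1)=-\int_0^{\tau_1}h'$ and the final formula for $\L$ come out negated. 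Moreover, the $\d_2\E_B$ factor this derivation produces is based at $z(\tau_2)=\E_B(\tau_2,\E_A(\tau_1,u))$, not at $\E_A(\tau_1,u)$ as in the displayed statement; for a general nonlinear $B$ these are different, and the former is what the exact Duhamel argument gives. They coincide here only because the specific $\E_B$ of this paper leaves the amplitude component fixed, and $\d_2\E_B$ depends only on that component (see \eqref{eq:EB}--\eqref{eq:d2EB}). You should have flagged that your computation does not literally reproduce the displayed formula rather than asserting that it does.
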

In the case of the Lie-Trotter splitting \eqref{eq:Z} for equation \eqref{eq:nls}, we would have
\begin{equation*}
  A = i\frac{\eps}{2}\Delta;\quad B(v) =
  -\frac{i}{\eps}f(|v|^2)v,\quad f(|v|^2)=\lambda |v|^{2\si};\quad
  F(v)= A(v)+B(v),
\end{equation*}
where we have omitted the dependence upon $\eps$ in the notations for the
sake of brevity.
\smallbreak

However, as pointed out in \cite{CaSINUM}, using the above result directly in
terms of the wave function $u^\eps$ does not seem convenient. In the
context of WKB regime, we rather consider the operators $A$ 
and $B$ defined by 
\begin{equation}\label{eq:AB}
  A
  \begin{pmatrix}
    \phi\\
a
  \end{pmatrix}
=
\begin{pmatrix}
  -\frac{1}{2}|\nabla \phi|^2\\
-\nabla \phi\cdot \nabla a -\frac{1}{2}a\Delta \phi
+i\frac{\eps}{2}\Delta a
\end{pmatrix},
\quad B  \begin{pmatrix}
    \phi\\
a
  \end{pmatrix}
=
\begin{pmatrix}
  -f(|a|^2)\\
0
\end{pmatrix}.
\end{equation}
We note that with this approach, neither
$A$ nor $B$ is a linear operator. 

\begin{lemma}\label{lem:crochet}
  Let $A$ and $B$ defined by \eqref{eq:AB}. Their commutator is given by
\begin{equation*}
  [A,B]\begin{pmatrix}
    \phi\\
a
  \end{pmatrix} =
  \begin{pmatrix}
    \nabla \phi\cdot \nabla f\(|a|^2\) -\DIV\left(|a|^2\nabla\phi+\eps\IM(\overline{a}\nabla a)\right)f'(|a|^2)\\
\nabla a\cdot \nabla f\(|a|^2\) +\frac{1}{2}a\Delta f\(|a|^2\)
  \end{pmatrix}.
\end{equation*}
As a consequence, if $\ell>d/2+3$, $\rho>0$, 
$\| \phi\|_{\H^{\ell+1}_\rho}\le \mu$, $\|a\|_{\H^{\ell}_\rho}\le \mu$, then there exists
$C=C(\mu)$ independent of $\eps\in [0,1]$
such that
\begin{equation*}
 \begin{pmatrix}
    \varphi\\
b
  \end{pmatrix}= [A,B]\begin{pmatrix}
    \phi\\
a
  \end{pmatrix} 
   \quad \text{satisfies }
\left\{
  \begin{aligned}
    \|\varphi\|_{\H^{\ell-2}_\rho}&\le C
  \(\| \phi\|_{\H^{\ell+1}_\rho} + \|a\|_{\H^{\ell}_\rho}\),\\
 \|b\|_{\H^{\ell-3}_\rho}&\le C
  \|a\|_{\H^{\ell}_\rho}.
  \end{aligned}
\right.
\end{equation*}
\end{lemma}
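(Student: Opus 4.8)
The plan is to first compute the commutator $[A,B]$ explicitly from the definition $[A,B](v)=A'(v)B(v)-B'(v)A(v)$ applied to the pair $v=(\phi,a)$, and then to bound the resulting expression using the tame estimates of Lemma~\ref{lem:tame} together with \eqref{eq:der}. For the algebraic computation, I would linearize $A$ and $B$ at $(\phi,a)$ in the direction $(\psi,h)$: writing $A'(v)(\psi,h)=\bigl(-\nabla\phi\cdot\nabla\psi,\ -\nabla\psi\cdot\nabla a-\nabla\phi\cdot\nabla h-\tfrac12 h\Delta\phi-\tfrac12 a\Delta\psi+i\tfrac{\eps}{2}\Delta h\bigr)$ and $B'(v)(\psi,h)=\bigl(-2\RE(\bar a h)f'(|a|^2),\ 0\bigr)$. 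Then $B(v)=(-f(|a|^2),0)$ has zero second component, so $A'(v)B(v)$ is obtained by plugging $(\psi,h)=(-f(|a|^2),0)$ into $A'$, giving first component $\nabla\phi\cdot\nabla f(|a|^2)$ and second component $\nabla f(|a|^2)\cdot\nabla a+\tfrac12 a\Delta f(|a|^2)$. Conversely $A(v)$ has first component $-\tfrac12|\nabla\phi|^2$ and second component $-\nabla\phi\cdot\nabla a-\tfrac12 a\Delta\phi+i\tfrac\eps2\Delta a$, so $B'(v)A(v)$ has second component $0$ and first component $-2\RE\bigl(\bar a(-\nabla\phi\cdot\nabla a-\tfrac12 a\Delta\phi+i\tfrac\eps2\Delta a)\bigr)f'(|a|^2)$. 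Simplifying $-2\RE(\bar a\nabla\phi\cdot\nabla a)=-\nabla\phi\cdot\nabla|a|^2$, $-\RE(|a|^2\Delta\phi)=-|a|^2\Delta\phi$, and $2\RE(i\bar a\Delta a)=-2\IM(\bar a\Delta a)=-2\,\DIV\,\IM(\bar a\nabla a)$, one recognizes $-\DIV(|a|^2\nabla\phi+\eps\IM(\bar a\nabla a))$; subtracting $B'(v)A(v)$ from $A'(v)B(v)$ produces exactly the claimed formula.

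For the estimates, I would work term by term. In the first component, $\nabla\phi\cdot\nabla f(|a|^2)$ is a product of a derivative of $\phi$ (controlled by $\|\phi\|_{\H^{\ell+1}_\rho}$ via \eqref{eq:der}, landing in $\H^{\ell}_\rho$) with $\nabla f(|a|^2)$; since $f(|a|^2)=\lambda|a|^{2\si}$ is polynomial in $(a,\bar a)$, the algebra estimate \eqref{eq:tame} gives $\|f(|a|^2)\|_{\H^{\ell}_\rho}\le C(\mu)\|a\|_{\H^{\ell}_\rho}$ by the same induction already used for \eqref{eq:produit-grad}, whence $\nabla f(|a|^2)\in\H^{\ell-1}_\rho$; applying Lemma~\ref{lem:tame}(1) (or simply \eqref{eq:tame}) with the loss accounted for gives a bound in $\H^{\ell-2}_\rho$ of the required form. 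The term $\DIV(|a|^2\nabla\phi)\,f'(|a|^2)$ is, modulo the polynomial factor $f'(|a|^2)$ estimated again by \eqref{eq:tame}, a sum of products like $(\nabla|a|^2)\cdot\nabla\phi$ and $|a|^2\Delta\phi$, each of which loses at most two derivatives off $\phi$ and is handled by \eqref{eq:tame}, and the $\eps$-term $\DIV(\eps\IM(\bar a\nabla a))f'(|a|^2)=\eps\IM(\bar a\Delta a)f'(|a|^2)$ costs two derivatives off $a$ (so it lives in $\H^{\ell-2}_\rho$) with a constant bounded uniformly for $\eps\in[0,1]$. For the second component, $\nabla a\cdot\nabla f(|a|^2)$ costs one derivative off $a$ and one off $f(|a|^2)$, landing in $\H^{\ell-2}_\rho$, while $a\Delta f(|a|^2)$ involves second derivatives of the polynomial $f(|a|^2)$ and hence at most $\H^{\ell-2}_\rho$; being slightly wasteful, both are placed in $\H^{\ell-3}_\rho\supset\H^{\ell-2}_\rho$, and the product bound \eqref{eq:tame} (valid since $\ell-3>d/2$) yields $\|b\|_{\H^{\ell-3}_\rho}\le C(\mu)\|a\|_{\H^{\ell}_\rho}$. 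Throughout, the constant $C$ depends only on $\mu$ through the polynomial factors $\|a\|_{\H^{\ell}_\rho}^{2\si-1}$ etc., and is $\rho$-independent because all constants in Lemma~\ref{lem:tame} are.

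The main obstacle is bookkeeping rather than conceptual: keeping careful track of exactly how many derivatives each term costs so that the right-hand sides land in $\H^{\ell-2}_\rho$ and $\H^{\ell-3}_\rho$ respectively (the asymmetry — two levels lost for the phase component, three for the amplitude component — comes from the extra Laplacian in $a\Delta f(|a|^2)$ acting on a quadratic-or-higher expression in $a$), and verifying that the $\eps\IM(\bar a\nabla a)$ contribution is genuinely uniformly bounded in $\eps\in[0,1]$ (which it is, since $\eps$ appears only as a bounded scalar prefactor, never improving or worsening regularity). A minor point is to confirm the sign conventions in the simplification $2\RE(i\bar a\Delta a)=-2\,\DIV\,\IM(\bar a\nabla a)$, which follows from $\IM(\bar a\Delta a)=\DIV\,\IM(\bar a\nabla a)$ since $\IM(\nabla\bar a\cdot\nabla a)=0$. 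Once the commutator formula is in hand, the estimates are a routine application of the product inequalities already collected in Lemma~\ref{lem:tame}.
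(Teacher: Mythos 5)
Your proposal follows the same route as the paper's proof: compute the linearizations $A'$ and $B'$ at $(\phi,a)$, evaluate $A'(v)B(v)$ and $B'(v)A(v)$, and then estimate each term of the commutator using \eqref{eq:tame} and \eqref{eq:der}, which is exactly how the paper proceeds (the paper simply refers to \cite{CaSINUM} for the final algebra and says the estimates "follow directly"). Your derivative-counting is correct, including the observation that $\eps$ enters only as a bounded scalar prefactor.

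One small caveat on the write-up of the commutator computation. The first component of $B'(v)A(v)$ equals $+\DIV\bigl(|a|^2\nabla\phi+\eps\IM(\bar a\nabla a)\bigr)f'(|a|^2)$, with a plus sign: the three simplification identities you list are correct as standalone facts, but when inserted into $-2\RE\bigl(\bar a\,(-\nabla\phi\cdot\nabla a-\tfrac12 a\Delta\phi+i\tfrac{\eps}{2}\Delta a)\bigr)$, the inner minus signs flip them, yielding $+\nabla\phi\cdot\nabla|a|^2+|a|^2\Delta\phi+\eps\DIV\IM(\bar a\nabla a)$. The minus sign in the lemma's formula then comes only from the subsequent subtraction $A'(v)B(v)-B'(v)A(v)$. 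As written, "one recognizes $-\DIV(\dots)$" before the subtraction reads as if $B'(v)A(v)$ itself already carried the minus, which would contradict your next sentence claiming the subtraction produces the stated formula. This is a presentational slip rather than a conceptual gap; the final formula and the estimates are right.
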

\begin{proof} Like in \cite{CaSINUM}, we have
 \begin{equation*}
 A'\begin{pmatrix}\phi\\a  \end{pmatrix}\begin{pmatrix}\varphi\\b \end{pmatrix}=\begin{pmatrix}-\nabla\phi\cdot\nabla\varphi\\
 -\nabla\phi\cdot\nabla b-\nabla\varphi\cdot\nabla a-\frac{1}{2}b\Delta\phi-\frac{1}{2}a\Delta\varphi+i\frac{\eps}{2}\Delta b
 \end{pmatrix},
 \end{equation*}
 whereas unlike in \cite{CaSINUM}, we consider a function $f$ which is
 not necessarily linear, so that the linearized operator of $B$ is
 given by 
 \begin{equation*}
 B'\begin{pmatrix}\phi\\a  \end{pmatrix}\begin{pmatrix}\varphi\\b \end{pmatrix}=\begin{pmatrix}-2\RE(\overline{a}b)f'(|a|^2)\\ 0\end{pmatrix}
 \end{equation*}
 and thus
 \begin{equation*}
 B'\begin{pmatrix}\phi\\a  \end{pmatrix}\(A\begin{pmatrix}\phi\\a \end{pmatrix}\)=\begin{pmatrix}\(2\RE(\overline{a}\nabla a\cdot\nabla\phi)+|a|^2\Delta\phi+\eps\IM(\overline{a}\Delta a)\)f'(|a|^2)\\ 0\end{pmatrix}.
 \end{equation*}
 The  explicit formula for
$[A,B]$ follows as in \cite{CaSINUM}. The estimates then follow
directly from \eqref{eq:tame} and \eqref{eq:der}. 
\end{proof}
We have the explicit formula
\begin{equation}\label{eq:EB}
 {\mathcal Y}_\eps^t\begin{pmatrix}
    \phi\\
a
  \end{pmatrix}=  \E_B\(t, \begin{pmatrix}
    \phi\\
a
  \end{pmatrix}\) 
=
\begin{pmatrix}
  \phi- t f(|a|^2)\\
a
\end{pmatrix},
\end{equation}
and we readily infer
\begin{equation}\label{eq:d2EB}
   \d_2 \E_B\(t, \begin{pmatrix}
    \phi\\
a
  \end{pmatrix}\) \begin{pmatrix}
    \varphi\\
b
  \end{pmatrix}
=
\begin{pmatrix}
  \varphi- 2\si\lambda t|a|^{2\si-2}\RE (\overline a b)\\
b
\end{pmatrix}.
\end{equation}
Finally, we compute that
\begin{equation*}
\begin{pmatrix}
  \varphi(t)\\
b(t)
\end{pmatrix}=  \d_2\E_F\(t,\begin{pmatrix}
    \phi_{0}\\
a_{0}
  \end{pmatrix}
\)\begin{pmatrix}
    \varphi_0\\
b_0
  \end{pmatrix}
\end{equation*}
solves the system
\begin{equation}\label{eq:linearise}
  \left\{
    \begin{aligned}
      & \d_t \varphi + \nabla \phi\cdot \nabla \varphi +
    2\si\lambda |a|^{2\si-2}\RE (\overline a b) =0;\quad \varphi_{\mid t=0}=\varphi_0,\\
& \d_t b +\nabla \phi\cdot \nabla b+\nabla \varphi\cdot \nabla a +
\frac{1}{2}\( b\Delta \phi+a\Delta \varphi\)=i\frac{\eps}{2}\Delta
b;\quad b_{\mid t=0}=b_0, 
    \end{aligned}
\right.
\end{equation}
where
\begin{equation*}
\begin{pmatrix}
    \phi(t)\\
a(t)
  \end{pmatrix}=\E_F\(t,\begin{pmatrix}
    \phi_0\\
a_0
  \end{pmatrix}\).
\end{equation*}
\begin{lemma}\label{lem:flotlinearise}
  Let $\ell>d/2+1$, $s\ge \ell$ and $(\varphi_0, b_0)\in \H_{M_0}^{\ell+1}\times \H_{M_0}^{\ell}$. Assume that $(\phi,a)\in C([0,T],\H_\rho^{s+1}\times \H_\rho^{s})\cap L^2([0,T],\H_\rho^{s+3/2}\times \H_\rho^{s+1/2})$.   Then for $M$ sufficiently large and $T<M_0/M$, the
  solution to \eqref{eq:linearise} satisfies 
  \begin{equation*}
  \vvvert\varphi\vvvert_{\ell+1,T}^2+\vvvert b\vvvert_{\ell,T}^2\le 4\|\varphi_0\|_{\H_{M_0}^{\ell+1}}^2+4\|b_0\|_{\H_{M_0}^{\ell}}^2.
  \end{equation*}
\end{lemma}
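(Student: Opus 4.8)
The plan is to derive an \emph{a priori} estimate for the linear system~\eqref{eq:linearise} in the time-dependent analytic norms $\vvvert\cdot\vvvert_{\ell+1,T}$ and $\vvvert\cdot\vvvert_{\ell,T}$, along the lines of Lemma~\ref{lem:evol-norm} and the first step of the proof of Proposition~\ref{prop:local}, but now viewing $(\phi,a)$ as given coefficients, controlled only at the regularity level $s\ge\ell$. As in Proposition~\ref{prop:evol-syst}, this forces us to use only the bilinear tame inequality~\eqref{eq:tame} (parts~1--2 of Lemma~\ref{lem:tame} would require $k,s\ge m$). Concretely I would apply~\eqref{eq:evol-norm} with $\rho(t)=M_0-Mt$, $\dot\rho\equiv-M$, to $\|\varphi\|_{\H_\rho^{\ell+1}}^2$ and to $\|b\|_{\H_\rho^{\ell}}^2$, and read the outcome as a parabolic estimate with diffusion coefficient $2M=-2\dot\rho$ in front of the dissipation norms $\|\varphi\|_{\H_\rho^{\ell+3/2}}^2$, resp. $\|b\|_{\H_\rho^{\ell+1/2}}^2$. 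As in Lemma~\ref{lem:evol-norm}, $\RE\<b,i\Delta b\>_{\H_\rho^{\ell}}=0$, so the Schr\"odinger term in~\eqref{eq:linearise} does not contribute and the whole estimate is independent of $\eps$.

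Each term on the right-hand side is then bounded by Cauchy--Schwarz in $\H_\rho^{\ell+1}$ (resp. $\H_\rho^{\ell}$), which shifts half a derivative onto the unknown, followed by~\eqref{eq:tame} with an auxiliary index $s_0\in(d/2,\ell-1]$, non-empty since $\ell>d/2+1$. In every product one arranges for the highest index to fall either on the dissipation norm of $\varphi$ or $b$, or on the square-integrable-in-time high norms $\H_\rho^{s+3/2}$ of $\phi$ and $\H_\rho^{s+1/2}$ of $a$, the remaining factors falling on norms of $\phi,a$ bounded by $\mu_1:=\sup_{[0,T]}\(\|\phi\|_{\H_\rho^{s+1}}+\|a\|_{\H_\rho^{s}}\)$ or on the energy norms $\|\varphi\|_{\H_\rho^{\ell+1}}$, $\|b\|_{\H_\rho^{\ell}}$; the polynomial coupling $\lambda|a|^{2\si-2}\RE(\overline a b)$ is handled by iterating~\eqref{eq:tame} exactly as in the proof of $\||b|^{2\si}\|_{\H_\rho^{\ell+1/2}}\le C\|b\|_{\H_\rho^{\ell}}^{2\si-1}\|b\|_{\H_\rho^{\ell+1/2}}$. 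Integrating over $[0,t]\subset[0,T]$, adding the two identities, and applying H\"older and Cauchy--Schwarz together with $\int_0^T\|\varphi\|_{\H_\rho^{\ell+3/2}}^2\,dt\le\frac{1}{2M}\vvvert\varphi\vvvert_{\ell+1,T}^2$, $\int_0^T\|b\|_{\H_\rho^{\ell+1/2}}^2\,dt\le\frac{1}{2M}\vvvert b\vvvert_{\ell,T}^2$ and $T<M_0/M$, I expect to obtain
\begin{equation*}
  \vvvert\varphi\vvvert_{\ell+1,T}^2+\vvvert b\vvvert_{\ell,T}^2\le\|\varphi_0\|_{\H_{M_0}^{\ell+1}}^2+\|b_0\|_{\H_{M_0}^{\ell}}^2+\frac{C}{\sqrt M}\(\vvvert\varphi\vvvert_{\ell+1,T}^2+\vvvert b\vvvert_{\ell,T}^2\),
\end{equation*}
where $C$ depends only on $\mu_1$, on $\int_0^T\|\phi\|_{\H_\rho^{s+3/2}}^2\,dt$ and $\int_0^T\|a\|_{\H_\rho^{s+1/2}}^2\,dt$ (finite by assumption), on $M_0$, and on $d,\si,\ell,s$, but neither on $M$ nor on $\eps$; moreover, exactly as observed at the beginning of the proof of Proposition~\ref{prop:evol-syst}, increasing $M$ only shrinks $[0,T]$ and decreases the weight, hence decreases $\mu_1$ and the two integrals, so $C$ may be frozen and $M$ then taken larger still. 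Choosing $M$ large enough that $C/\sqrt M\le\frac{3}{4}$ yields $\vvvert\varphi\vvvert_{\ell+1,T}^2+\vvvert b\vvvert_{\ell,T}^2\le4\|\varphi_0\|_{\H_{M_0}^{\ell+1}}^2+4\|b_0\|_{\H_{M_0}^{\ell}}^2$. This absorption presupposes that $\vvvert\varphi\vvvert_{\ell+1,T}$ and $\vvvert b\vvvert_{\ell,T}$ are already finite; this is granted either by the local solvability of the linear system~\eqref{eq:linearise}, which has smooth bounded coefficients (as in the linear Schr\"odinger reduction in the proof of Proposition~\ref{prop:local}), followed by the usual continuation argument, or because $(\varphi,b)=\d_2\E_F\(t,(\phi_0,a_0)\)(\varphi_0,b_0)$ inherits its regularity from the flow of Proposition~\ref{prop:local}.

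The only genuinely delicate term, and the reason the analytic framework is indispensable here, is $\frac12 a\Delta\varphi$ in the second equation of~\eqref{eq:linearise}: estimated at level $\ell-\frac12$ it requires $\|\varphi\|_{\H_\rho^{\ell+3/2}}$ while only $\|a\|_{\H_\rho^{s}}\le\mu_1$ is available, a loss of two derivatives that would be irrecoverable in Sobolev spaces. Here $\ell+\frac32=(\ell+1)+\frac{\nu}{2}$ is exactly the dissipation index produced, with prefactor $2M$, by~\eqref{eq:evol-norm}, so that after time integration the positive term $2M\int_0^T\|\varphi\|_{\H_\rho^{\ell+3/2}}^2\,dt$ on the left-hand side absorbs it once $M$ is large. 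The remaining care concerns only the bookkeeping imposed by $\ell$ possibly being strictly smaller than $s$: no index may exceed $\ell+1$ on $\varphi$ or $\ell$ on $b$ outside the dissipation norms, which is precisely what the choice $s_0\in(d/2,\ell-1]$ in~\eqref{eq:tame} ensures, in the spirit of Proposition~\ref{prop:evol-syst}.
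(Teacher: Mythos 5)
Your proposal follows essentially the same route as the paper's own proof: apply \eqref{eq:evol-norm} at levels $\ell+1$ for $\varphi$ and $\ell$ for $b$ with $\rho=M_0-Mt$, observe that $\RE\<b,i\Delta b\>_{\H_\rho^\ell}=0$, bound every term via Cauchy--Schwarz plus the bilinear inequality \eqref{eq:tame} only (so that the coefficients $(\phi,a)$ need only be measured at a fixed low level), route the worst term $a\Delta\varphi$ onto the dissipation norm $\|\varphi\|_{\H_\rho^{\ell+3/2}}$, integrate in time, and absorb by taking $M$ large. The two cosmetic differences from the paper are (i) you measure the coefficients through $\mu_1$ and the $L^2_t\H_\rho^{s+3/2},L^2_t\H_\rho^{s+1/2}$ integrals, whereas the paper bundles them into $\mu=\vvvert\phi\vvvert_{\ell+1,T}+\vvvert a\vvvert_{\ell,T}$ (equivalent, since $s\ge\ell$ and $|\dot\rho|=M$), and (ii) the resulting small factor is $C/\sqrt M$ in your accounting versus $C\mu/M$ in the paper's — both vanish as $M\to\infty$, so the absorption goes through either way. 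You also make explicit the a priori finiteness of $\vvvert\varphi\vvvert_{\ell+1,T}$ and $\vvvert b\vvvert_{\ell,T}$ needed to justify the absorption step, a point the paper leaves implicit; your first justification (local solvability of the linear system with smooth bounded coefficients plus continuation) is the right one.
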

\begin{proof}
The proof is quite similar to the one of Lemma \ref{lem:evol-norm} and Proposition \ref{prop:local}. We take the $\H_\rho^{\ell+1}$ scalar product of the first equation in \eqref{eq:linearise} with $\varphi$, and the $\H_\rho^{\ell}$ scalar product of the second one with $b$. We get
\begin{align*}
   \d_t \|\varphi\|_{\H_\rho^{\ell+1}}^2 + 2M
   \|\varphi\|_{\H_\rho^{\ell+3/2}}^2&\le
  C\|\varphi\|_{\H_{\rho}^{\ell+3/2}}\( \|\nabla\phi\cdot \nabla
  \varphi\|_{\H_{\rho}^{\ell+1/2}} + \|
    |a|^{2\si-2}\RE(\overline{a}b)\|_{\H_{\rho}^{\ell+1/2}} \),\\
 \d_t \|b\|_{\H_\rho^{\ell}}^2 + 2M
   \|b\|_{\H_\rho^{\ell+1/2}}^2 &\le
  C\|b\|_{\H_{\rho}^{\ell+1/2}}\( \|\nabla\phi\cdot \nabla
  b\|_{\H_{\rho}^{\ell-1/2}} + \|\nabla\varphi\cdot \nabla
  a\|_{\H_{\rho}^{\ell-1/2}}\right.\\
&\qquad  \left.+\|
    b\Delta \phi\|_{\H_{\rho}^{\ell-1/2}}+\|
    a\Delta \varphi\|_{\H_{\rho}^{\ell-1/2}} \).
\end{align*}
Then, the use of \eqref{eq:tame} with $m>d/2$ and integration in time
yield, with estimates similar to those presented in the proof of
Proposition~\ref{prop:local}, 
\begin{align*}
   \|\varphi(t)\|_{\H_{\rho(t)}^{\ell+1}}^2 + 2M
   \int_0^t\|\varphi(\tau)\|_{\H_{\rho(\tau)}^{\ell+3/2}}^2 &d\tau\le 
                                                                  \|\varphi_0\|_{\H_{M_0}^{\ell+1}}^2\\
+
  C&\int_0^t\|\varphi(\tau)\|_{\H_{\rho(\tau)}^{\ell+3/2}}
  \|\phi(\tau)\|_{\H_{\rho(\tau)}^{\ell+3/2}}\|\varphi(\tau)\|_{\H_{\rho(\tau)}^{m+1}}d\tau\\
+C&\int_0^t\|\varphi(\tau)\|_{\H_{\rho(\tau)}^{\ell+3/2}}^2
  \|\phi(\tau)\|_{\H_{\rho(\tau)}^{m+1}}d\tau\\
+C&\int_0^t\|\varphi(\tau)\|_{\H_{\rho(\tau)}^{\ell+3/2}}
  \|a(\tau)\|_{\H_{\rho(\tau)}^{\ell+1/2}}\|a(\tau)\|_{\H_{\rho(\tau)}^{m}}^{2\si-2}\|b(\tau)\|_{\H_{\rho(\tau)}^{m}}d\tau\\
  +C&\int_0^t\|\varphi(\tau)\|_{\H_{\rho(\tau)}^{\ell+3/2}}
  \|b(\tau)\|_{\H_{\rho(\tau)}^{\ell+1/2}}\|a(\tau)\|_{\H_{\rho(\tau)}^{m}}^{2\si-1}d\tau,
\end{align*}
\begin{align*}
  \|b(t)\|_{\H_{\rho(t)}^{\ell}}^2 + 2M\int_0^t
   \|b(\tau)\|_{\H_{\rho(\tau)}^{\ell+1/2}}^2& d\tau
                                               \le\|b_0\|_{\H_{M_0}^{\ell}}^2\\
&+
  C\int_0^t\|b(\tau)\|_{\H_{\rho(\tau)}^{\ell+1/2}}\|\phi(\tau)\|_{\H_{\rho(\tau)}^{\ell+1/2}}\|b(\tau)\|_{\H_{\rho(\tau)}^{m+1}}d\tau\\
  &+C\int_0^t\|b(\tau)\|_{\H_{\rho(\tau)}^{\ell+1/2}}^2\|\phi(\tau)\|_{\H_{\rho(\tau)}^{m+1}}d\tau\\
  &+  C\int_0^t\|b(\tau)\|_{\H_{\rho(\tau)}^{\ell+1/2}}\|\varphi(\tau)\|_{\H_{\rho(\tau)}^{\ell+1/2}}\|a(\tau)\|_{\H_{\rho(\tau)}^{m+1}}d\tau\\
   &+  C\int_0^t\|b(\tau)\|_{\H_{\rho(\tau)}^{\ell+1/2}}\|a(\tau)\|_{\H_{\rho(\tau)}^{\ell+1/2}}\|\varphi(\tau)\|_{\H_{\rho(\tau)}^{m+1}}d\tau\\
     &+  C\int_0^t\|b(\tau)\|_{\H_{\rho(\tau)}^{\ell+1/2}}\|b(\tau)\|_{\H_{\rho(\tau)}^{\ell-1/2}}\|\phi(\tau)\|_{\H_{\rho(\tau)}^{m+2}}d\tau\\
          &+  C\int_0^t\|b(\tau)\|_{\H_{\rho(\tau)}^{\ell+1/2}}\|\phi(\tau)\|_{\H_{\rho(\tau)}^{\ell+3/2}}\|b(\tau)\|_{\H_{\rho(\tau)}^{m}}d\tau\\
              &+  C\int_0^t\|b(\tau)\|_{\H_{\rho(\tau)}^{\ell+1/2}}\|a(\tau)\|_{\H_{\rho(\tau)}^{\ell-1/2}}\|\varphi(\tau)\|_{\H_{\rho(\tau)}^{m+2}}d\tau\\
          &+  C\int_0^t\|b(\tau)\|_{\H_{\rho(\tau)}^{\ell+1/2}}\|\varphi(\tau)\|_{\H_{\rho(\tau)}^{\ell+3/2}}\|a(\tau)\|_{\H_{\rho(\tau)}^{m}}d\tau\\ 
\end{align*}
We choose $m=\ell>d/2+1$ in the estimate for $\varphi$ and $m=\ell-1>d/2$ in the estimate for $b$. Denoting 
$$\mu= \vvvert\phi\vvvert_{\ell+1,T}+\vvvert a\vvvert_{\ell,T},$$
and 
$$\|\psi\|_{L^2_t\H_{\rho}^k}^2=\int_0^t\|\psi(\tau)\|_{\H_{\rho(\tau)}^k}^2d\tau,$$
since the $\H_\rho^k$ norms are increasing with $k$, Cauchy-Schwarz in time yields
\begin{align*}
  & \|\varphi(t)\|_{\H_{\rho(t)}^{\ell+1}}^2 +
  2M\|\varphi\|_{L^2_t\H_{\rho}^{\ell+3/2}}^2 \le
    \|\varphi_0\|_{\H_{M_0}^{\ell+1}}^2+
    C\|\varphi\|_{L^2_t\H_{\rho}^{\ell+3/2}} \times\\
 & \(\frac{\mu}{\sqrt{M}}\sup_{0\le\tau\le
   t}\|\varphi(\tau)\|_{\H_{\rho(\tau)}^{\ell+1}}+\mu\|\varphi\|_{L^2_t\H_{\rho}^{\ell+3/2}} 
+\frac{\mu^{2\sigma-1}}{\sqrt{M}}
\sup_{0\le\tau\le
   t}\|b(\tau)\|_{\H_{\rho(\tau)}^{\ell}}+\mu^{2\sigma-1}\|b\|_{L^2_t\H_{\rho}^{\ell+1/2}}\) ,
\end{align*}  
\begin{align*}  
&  \|b(t)\|_{\H_{\rho(t)}^{\ell}}^2+
  2M\|b\|_{L^2_t\H_{\rho}^{\ell+1/2}}^2 \le
  \|b_0\|_{\H_{M_0}^{\ell}}^2
  +C\mu\|b\|_{L^2_t\H_{\rho}^{l+1/2}}\times \\
&\left(\frac{1}{\sqrt{M}}\sup_{0\le\tau\le t}\|b(\tau)\|_{\H_{\rho(\tau)}^{l}}+\|b\|_{L^2_t\H_{\rho}^{l+1/2}}+\|\varphi\|_{L^2_t\H_{\rho}^{l+3/2}}+
  \frac{1}{\sqrt{M}}\sup_{0\le\tau\le t}\|\varphi(\tau)\|_{\H_{\rho(\tau)}^{l+1}}\).
\end{align*}
Adding the last two inequalities, we deduce, for $\mu\le C$,
\begin{align*}
  & \|\varphi(t)\|_{\H_{\rho(t)}^{\ell+1}}^2+\|b(t)\|_{\H_{\rho(t)}^{\ell}}^2+2M\|\varphi\|_{L^2_t\H_{\rho}^{\ell+3/2}}^2+2M\|b\|_{L^2_t\H_{\rho}^{\ell+1/2}}^2\le \|\varphi_0\|_{\H_{M_0}^{\ell+1}}^2+\|b_0\|_{\H_{M_0}^{\ell}}^2\\
   &\quad +C\frac{\mu}{M}\(\sup_{0\le\tau\le t}\|\varphi(\tau)\|_{\H_{\rho(\tau)}^{l+1}}^2+\sup_{0\le\tau\le t}\|b(\tau)\|_{\H_{\rho(\tau)}^{l}}^2+M\|\varphi\|_{L^2_t\H_{\rho}^{l+3/2}}^2+M\|b\|_{L^2_t\H_{\rho}^{l+1/2}}^2\),
\end{align*}
from which the inequality of the lemma easily follows provided $M$ is sufficiently large.
\end{proof}

We infer the WKB local error estimate:
\begin{theorem}[Local error estimate for WKB
  states]\label{theo:errlocBKW} 
  Let $\ell>d/2+5$, $M_0>0$, $M\gg1$, $\rho(t)=M_0-Mt$ and $\mu>0$. Let $(\phi_0,a_0)\in \H_{M_0}^{\ell+1}\times \H_{M_0}^{\ell}$ such that
  \begin{equation*}
    \|\phi_0\|_{\H_{M_0}^{\ell+1}}\le \mu,\quad \|a_0\|_{\H_{M_0}^{\ell}}\le \mu.
  \end{equation*}
There exist $C,c_0>0$ (depending on $\mu$) independent
  of $\eps\in (0,1]$ such that 
  \begin{equation*}
    \L\(t,
    \begin{pmatrix}
      \phi_0\\
a_0
    \end{pmatrix}\):= \mathcal Z^t_\eps\begin{pmatrix}
      \phi_0\\
a_0
    \end{pmatrix}-
\mathcal S^t_\eps\begin{pmatrix}
      \phi_0\\
a_0
    \end{pmatrix}  =
    \begin{pmatrix}
      \Psi^\eps(t)\\
A^\eps(t)
    \end{pmatrix},
  \end{equation*}
satisfies 

\begin{equation*}
  \|\Psi^\eps(t)\|_{\H_{\rho(t)}^{\ell-3}}+\|A^\eps(t)\|_{\H_{\rho(t)}^{\ell-4}}\le C
  t^2,\quad 0\le t\le c_0. 
\end{equation*}
\end{theorem}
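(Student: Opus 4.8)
The plan is to apply the exact commutator formula of Theorem~\ref{theo:error} to the pair of nonlinear operators $A,B$ defined in \eqref{eq:AB}, for which $F=A+B$ generates $\mathcal S^t_\eps$ and $\mathcal Z^t_\eps=\mathcal Y^t_\eps\mathcal X^t_\eps=\E_B(t,\E_A(t,\cdot))$. The formula expresses $\L(t,(\phi_0,a_0))$ as the double time integral, over $0\le\tau_2\le\tau_1\le t$, of
\begin{equation*}
  \d_2\E_F\(t-\tau_1,\mathcal Z^{\tau_1}(\phi_0,a_0)\)\,\d_2\E_B\(\tau_1-\tau_2,\E_A(\tau_1,(\phi_0,a_0))\)\,[B,A]\(\E_B(\tau_2,\E_A(\tau_1,(\phi_0,a_0)))\).
\end{equation*}
Since the integrand is evaluated on a triangle of area $t^2/2$, it suffices to bound the integrand in $\H_{\rho(t)}^{\ell-3}\times\H_{\rho(t)}^{\ell-4}$ by a constant $C(\mu)$ uniformly for $0\le\tau_2\le\tau_1\le t\le c_0$. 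I will do this by following the three factors from right to left.

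\textbf{Step 1: propagation of bounds along the intermediate flows.} First I would fix $c_0$ small and $M$ large so that, by Proposition~\ref{prop:local} (applied to $F$ with $\ell$ replaced by $\ell$) and the explicit formulas \eqref{eq:EB}, \eqref{eq:d2EB} for $\E_B$, all the states appearing as arguments above — namely $\E_A(\tau_1,(\phi_0,a_0))$, $\E_B(\tau_2,\E_A(\tau_1,(\phi_0,a_0)))$, and $\mathcal Z^{\tau_1}(\phi_0,a_0)$ — stay bounded by, say, $2\mu$ in $\H_{\rho(\tau)}^{\ell+1}\times\H_{\rho(\tau)}^{\ell}$ for the corresponding time $\tau\le c_0$. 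For the $\E_A$ flow this is Proposition~\ref{prop:evol-syst} (or Proposition~\ref{prop:local}) applied to the system \eqref{eq:linear-syst}; for $\E_B$ it is immediate from \eqref{eq:EB} together with the tame estimate \eqref{eq:tame} controlling $\|f(|a|^2)\|_{\H_\rho^{\ell+1}}$, which only shifts the constant; for $\mathcal Z$ it is Corollary~\ref{cor:borneZ}. (Here I use $\ell>d/2+5>d/2+1$, and the regularity levels $\ell+1,\ell$ that match the phase/amplitude shift.)

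\textbf{Step 2: size of the commutator.} Apply Lemma~\ref{lem:crochet} at the base point $v_{\tau_1,\tau_2}:=\E_B(\tau_2,\E_A(\tau_1,(\phi_0,a_0)))$, which by Step~1 satisfies $\|\phi\|_{\H_{\rho}^{\ell+1}}\le 2\mu$, $\|a\|_{\H_{\rho}^{\ell}}\le 2\mu$ (note $\ell>d/2+3$ as required). This gives $[A,B]v_{\tau_1,\tau_2}=(\varphi,b)$ with $\|\varphi\|_{\H_\rho^{\ell-2}}\le C(\mu)$ and $\|b\|_{\H_\rho^{\ell-3}}\le C(\mu)$ — and crucially $[A,B]=-[B,A]$, so this is exactly the size of $[B,A]$ appearing in the formula. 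This is the only place where two levels of regularity are lost on the phase component and three on the amplitude component.

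\textbf{Step 3: the two differential-of-flow factors are bounded operators with a fixed further loss of one derivative.} Next, $\d_2\E_B(\tau_1-\tau_2,\cdot)$ acting on $(\varphi,b)$ is given explicitly by \eqref{eq:d2EB}: it maps $(\varphi,b)$ to $(\varphi-2\si\lambda(\tau_1-\tau_2)|a|^{2\si-2}\RE(\bar a b),\,b)$, where $a$ is the amplitude of $\E_A(\tau_1,(\phi_0,a_0))$; using \eqref{eq:tame}, \eqref{eq:der} and the bound $\|a\|_{\H_\rho^{\ell}}\le 2\mu$ from Step~1, and $\tau_1-\tau_2\le c_0\le1$, this preserves the spaces $\H_\rho^{\ell-2}\times\H_\rho^{\ell-3}$ up to the constant $C(\mu)$. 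Finally, $\d_2\E_F(t-\tau_1,\cdot)$ is the solution operator of the \emph{linearized} system \eqref{eq:linearise} around the trajectory $\E_F(\tau,\mathcal Z^{\tau_1}(\phi_0,a_0))$, $0\le\tau\le t-\tau_1$; by Step~1 this trajectory stays bounded by $2\mu$ in $\H_\rho^{\ell-1+1}\times\H_\rho^{\ell-1}$ (indeed in $\H_\rho^{\ell+1}\times\H_\rho^\ell$, which is more than enough for the regularity threshold $s\ge\ell-1$ that Lemma~\ref{lem:flotlinearise} demands at level $\ell-3$), so Lemma~\ref{lem:flotlinearise}, applied with output level $\ell-4$ for the amplitude component and $\ell-3$ for the phase, yields $\|\varphi(t-\tau_1)\|_{\H_{\rho(t)}^{\ell-3}}+\|b(t-\tau_1)\|_{\H_{\rho(t)}^{\ell-4}}\le 2(\|\varphi_0\|_{\H_\rho^{\ell-3}}+\|b_0\|_{\H_\rho^{\ell-4}})$ with $(\varphi_0,b_0)$ the output of Steps~2--3. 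Note that the weight $\rho$ is decreasing, so evaluating all norms at the final time $\rho(t)$ is legitimate: $\|\cdot\|_{\H_{\rho(t)}^k}\le\|\cdot\|_{\H_{\rho(\tau)}^k}$ for $\tau\le t$.

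Chaining Steps~2--3 and integrating over the triangle $\{0\le\tau_2\le\tau_1\le t\}$ of area $t^2/2$ gives $\|\Psi^\eps(t)\|_{\H_{\rho(t)}^{\ell-3}}+\|A^\eps(t)\|_{\H_{\rho(t)}^{\ell-4}}\le C(\mu)\,t^2$ for $0\le t\le c_0$, which is the claim. The total derivative loss is $3$ on the phase and $4$ on the amplitude, consistent with the hypothesis $\ell>d/2+5$ (leaving $\ell-4>d/2+1$ of residual regularity, still above the Sobolev threshold). The main obstacle is the bookkeeping of Step~3: one must check that the reference trajectory feeding the linearized flow $\d_2\E_F$ genuinely satisfies the regularity hypothesis of Lemma~\ref{lem:flotlinearise} at the lowered levels $\ell-3,\ell-4$ — this is why we need $M$ large and $c_0$ small uniformly in $\eps$, and why the base-point bounds of Step~1 must be established at the \emph{higher} level $\ell+1,\ell$ rather than at the level where the commutator lives; everything else is a routine application of the tame estimates of Lemma~\ref{lem:tame} and the explicit formulas \eqref{eq:EB}--\eqref{eq:d2EB}.
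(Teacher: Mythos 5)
Your overall strategy matches the paper's: invoke Theorem~\ref{theo:error}, bound the three factors of the integrand uniformly in $(\tau_1,\tau_2)$, and integrate over the triangle. However, Step~1 contains a genuine error that is exactly the loss-of-regularity phenomenon the paper is built around, and it propagates into Step~2.

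You claim that $\E_B(\tau_2,\E_A(\tau_1,(\phi_0,a_0)))$ and $\mathcal Z^{\tau_1}(\phi_0,a_0)$ stay bounded in $\H_\rho^{\ell+1}\times\H_\rho^{\ell}$, arguing that ``the tame estimate \eqref{eq:tame} controlling $\|f(|a|^2)\|_{\H_\rho^{\ell+1}}$ only shifts the constant.'' This is false: $\E_B$ replaces $\phi$ by $\phi-\tau f(|a|^2)$ with $f(|a|^2)=\lambda|a|^{2\sigma}$, and since $a\in\H_\rho^{\ell}$, the tame estimate only gives $\||a|^{2\sigma}\|_{\H_\rho^{\ell}}\le C\|a\|_{\H_\rho^{\ell}}^{2\sigma}$; a bound in $\H_\rho^{\ell+1}$ would need $a\in\H_\rho^{\ell+1}$. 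The phase therefore drops one level, from $\H_\rho^{\ell+1}$ to $\H_\rho^{\ell}$, after applying $\E_B$. The paper records this explicitly: $\|\phi_2^\eps\|_{\H_{\rho(\tau_1)}^{\ell}}\le C\mu$, not $\ell+1$. Consequently Lemma~\ref{lem:crochet} must be applied with $\ell$ replaced by $\ell-1$, giving commutator output in $\H_\rho^{\ell-3}\times\H_\rho^{\ell-4}$, not $\H_\rho^{\ell-2}\times\H_\rho^{\ell-3}$ as you claim in Step~2. Likewise the parenthetical in Step~3 asserting that $\E_F(\cdot,\mathcal Z^{\tau_1}(\phi_0,a_0))$ lies in $\H_\rho^{\ell+1}\times\H_\rho^{\ell}$ is wrong; it lives only in $\H_\rho^{\ell}\times\H_\rho^{\ell-1}$ (the non-parenthetical version you also wrote), which is what the paper uses via Proposition~\ref{prop:local} at initial regularity $(\ell,\ell-1)$.

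You are somewhat rescued by the fact that Step~3 and the final conclusion only ever invoke the weaker (and true) bounds at levels $\ell-3,\ell-4$ for the commutator output and $\ell,\ell-1$ for the reference trajectory of Lemma~\ref{lem:flotlinearise}, so the chain does close and the stated estimate follows with the same losses as the paper. But as written the proposal contains a false intermediate claim together with a self-contradictory downgrade from $(\ell-2,\ell-3)$ in Step~2 to $(\ell-3,\ell-4)$ in Step~3, which would not survive a careful read. The fix is to track the loss of one derivative in the phase under $\E_B$ (and hence under $\mathcal Z$), apply Lemma~\ref{lem:crochet} at base level $\ell-1$, and only claim the commutator output at $\H^{\ell-3}\times\H^{\ell-4}$, exactly as the paper does.
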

The above result obviously involves a loss of regularity, between the
initial assumptions and the conclusion. It is important to note that
the local error estimate is used only once in the final Lady
Windermere's fan argument presented in the next section, so this loss
is not a serious problem.  
\begin{proof}
  Let $t\in [0,c_0]$, and fix $\tau_1,\tau_2$ such that $0\le \tau_2\le
  \tau_1\le t$. Introduce the following intermediary notations:
  \begin{align*}
    & \begin{pmatrix}
      \phi_1\\
a_1^\eps
    \end{pmatrix}=\E_A\(\tau_1,
    \begin{pmatrix}
      \phi_0\\
a_0
    \end{pmatrix}\)
   ,\\
&\begin{pmatrix}
      \phi_2^\eps\\
a_2^\eps
    \end{pmatrix}=\E_B\(\tau_2,
\begin{pmatrix}
      \phi_1\\
a_1^\eps
    \end{pmatrix}\) ,\quad\quad \begin{pmatrix}
\tilde \phi_2^\eps\\
\tilde a_2^\eps
    \end{pmatrix}=\E_B\(\tau_1,
\begin{pmatrix}
      \phi_1\\
a_1^\eps
    \end{pmatrix}\)\\
&\begin{pmatrix}
      \phi_3^\eps\\
a_3^\eps
    \end{pmatrix}=[B,A] \begin{pmatrix}
      \phi_2^\eps\\
a_2^\eps
    \end{pmatrix} ,\qquad
\begin{pmatrix}
      \phi_4^\eps\\
a_4^\eps
    \end{pmatrix}= \d_2 \E_B\(\tau_1-\tau_2, 
\begin{pmatrix}
  \phi_1\\
a_1^\eps
\end{pmatrix}\)
\begin{pmatrix}
      \phi_3^\eps\\
a_3^\eps
    \end{pmatrix}.
  \end{align*}
  Then in view of Theorem~\ref{theo:error}, we have
  \begin{equation*}
    \begin{pmatrix}
      \Psi^\eps(t)\\
A^\eps(t)
    \end{pmatrix}=
\int_0^t\int_0^{\tau_1}\d_2\E_F\(t-\tau_1,
\begin{pmatrix}
      \tilde \phi_2^\eps\\
\tilde a_2^\eps
    \end{pmatrix}\)
\begin{pmatrix}
      \phi_4^\eps\\
a_4^\eps
    \end{pmatrix}d\tau_2d\tau_1.
  \end{equation*}
Since $\ell>d/2+1$, Proposition~\ref{prop:local} for $\lambda=0$ ensures that $(\phi_1,a_1^\eps)\in \H_{\rho(\tau_1)}^{\ell+1}\times \H_{\rho(\tau_1)}^{\ell}$ is well defined provided $\tau_1\le c_0< M_0/M$, with  (according to \eqref{eq:bound} where we can remove the $\|a_0\|_{\H_{M_0}^\ell}^{4\sigma}$ term because $\lambda=0$)
\begin{equation*}
  \|\phi_1\|_{\H_{\rho(\tau_1)}^{\ell+1}}\le 2\mu,\quad \|a_1^\eps\|_{\H_{\rho(\tau_1)}^{\ell}}\le 2\mu. 
\end{equation*}
\eqref{eq:EB} writes $(\phi_2^\eps,a_2^\eps)=(\phi_1^\eps-\lambda\tau_2|a_1^\eps|^{2\sigma},a_1^\eps)$ and thus \eqref{eq:tame} yields (in the calculations below, the constant $C$ may depend on $\mu$ and may change from line to line)
\begin{equation*}
  \|\phi_2^\eps\|_{\H_{\rho(\tau_1)}^{\ell}}\le 2\mu+C\mu^{2\sigma}\le C\mu,\quad \|a_2^\eps\|_{\H_{\rho(\tau_1)}^{\ell}}\le 2\mu,
\end{equation*}
because $\ell>d/2$. Similarly,
\begin{equation}\label{eq:phi2tilde}
  \|\tilde\phi_2^\eps\|_{\H_{\rho(\tau_1)}^{\ell}}\le C\mu,\quad \|\tilde a_2^\eps\|_{\H_{\rho(\tau_1)}^{\ell}}\le 2\mu .
\end{equation}
Next, since $\ell-1>d/2+3$, Lemma~\ref{lem:crochet} implies
\begin{equation*}
  \|\phi_3^\eps\|_{\H_{\rho(\tau_1)}^{\ell-3}}\le C\mu,\quad \|a_3^\eps\|_{\H_{\rho(\tau_1)}^{\ell-4}}\le C\mu.
\end{equation*}
In view of \eqref{eq:d2EB}, we have
\begin{equation*}
\phi_4^\eps = \phi_3^\eps
-2\sigma\lambda (\tau_1-\tau_2)|a_1^\eps|^{2\sigma-2} \RE \(\overline a_1^\eps a_3^\eps\), \quad a_4^\eps=a_3^\eps,
\end{equation*}
and therefore
\begin{equation}\label{eq:phi4}
    \|\phi_4^\eps\|_{\H_{\rho(\tau_1)}^{\ell-3}}\le C\mu,\quad \|a_4^\eps\|_{\H_{\rho(\tau_1)}^{\ell-4}}\le C\mu,
\end{equation}
since $\ell-3>d/2$ and thanks to \eqref{eq:tame}. 

Finally, we prove that if $\ell>d/2+5$, the $\H_{\rho(t)}^{\ell-3}\times \H_{\rho(t)}^{\ell-4}$ norm of 
\begin{equation*}
\begin{pmatrix}
      \phi_5^\eps\\
a_5^\eps
    \end{pmatrix}= \d_2\E_F\(t-\tau_1,
\begin{pmatrix}
      \tilde \phi_2^\eps\\
\tilde a_2^\eps
    \end{pmatrix}\)
\begin{pmatrix}
      \phi_4^\eps\\
a_4^\eps
    \end{pmatrix}
  \end{equation*}
is uniformly bounded in $t,\tau_1,\tau_2$ as long as $0\le \tau_2\le
  \tau_1\le t\le T<M_0/M$. For this purpose, first note that since $\ell-1>d/2+1$, it follows from \eqref{eq:phi2tilde} and Proposition \ref{prop:local} that we can choose $M=M(\mu)$ sufficiently large such that if $0<T-\tau_1<\rho(\tau_1)/M$,
  \begin{align*}
\begin{pmatrix}\phi\\ a\end{pmatrix}(\tau)= \E_F\(\tau-\tau_1,
\begin{pmatrix}
      \tilde \phi_2^\eps\\
\tilde a_2^\eps
    \end{pmatrix}\) \text{ is such that}\\
\begin{pmatrix}\phi\\ a\end{pmatrix}\in
    C\([\tau_1,T],\H_{\rho}^\ell\times\H_{\rho}^{\ell-1}\)\cap
    L^2\([\tau_1,T],\H_{\rho}^{\ell+1/2}\times\H_{\rho}^{\ell-1/2}\), 
  \end{align*}
  with
  \begin{align*}
  \max&\left(\sup_{\tau_1\le \tau\le
    T}\|\phi(\tau)\|_{\H_{\rho(\tau)}^{\ell}} ^2,\sup_{\tau_1\le \tau\le
    T}\|a(\tau)\|_{\H_{\rho(\tau)}^{\ell-1}} ^2\right. ,\\
    & \left. 2M\int_{\tau_1}^T 
  \|\phi(\tau)\|_{\H_{\rho(\tau)}^{\ell+1/2}}^2 d\tau,2M\int_{\tau_1}^T 
  \|a(\tau)\|_{\H_{\rho(\tau)}^{\ell-1/2}}^2 d\tau\right)\le C(\mu+\mu^{2\sigma}). 
\end{align*}
(Note that $\rho(\tau)=\rho(\tau_1)-M(\tau-\tau_1)$). Then, thanks to
\eqref{eq:phi4} and Lemma \ref{lem:flotlinearise}, since
$\ell-4>d/2+1$ and $s=\ell-1\ge \ell-4$, choosing possibly $M=M(\mu)$
even larger,  
\begin{align*}
\max\(\|\phi_5^\eps\|_{\H_{\rho(t)}^{\ell-3}},\|a_5^\eps\|_{\H_{\rho(t)}^{\ell-4}}\)\le C\mu.
\end{align*}
The theorem follows.
\end{proof}
Back to the wave functions, we obtain an
estimate similar to the one presented in \cite[Section~4.2.2]{DeTh13}:
\begin{corollary}\label{cor:local-wave}
 Under the assumptions of Theorem \ref{theo:errlocBKW}, denoting 
$$ \begin{pmatrix}
       \phi_t^\eps\\
 a_t^\eps
   \end{pmatrix}=\mathcal{Z}_\eps^t\begin{pmatrix}
     \phi_0\\
 a_0
   \end{pmatrix},\quad 
   \begin{pmatrix}
      \phi^\eps(t)\\
a^\eps(t)
    \end{pmatrix}=\mathcal{S}_\eps^t\begin{pmatrix}
       \phi_0\\
 a_0
    \end{pmatrix},$$
there exist $C,c_0>0$ (depending on $\mu$) independent
  of $\eps\in (0,1]$ such that   
  \begin{equation*}
    \left\|a_t^\eps e^{i\phi_t^\eps/\eps} -a^\eps(t)
      e^{i\phi^\eps(t)/\eps} \right\|_{L^2}\le
    C\frac{t^2}{\eps}, \quad 0\le t\le c_0.
  \end{equation*}
\end{corollary}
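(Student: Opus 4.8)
The plan is to deduce the wave-function estimate from the analytic-regularity estimate of Theorem~\ref{theo:errlocBKW} by an elementary splitting of the difference, losing exactly one power of $\eps$ through the phase factor, as in \cite[Section~4.2.2]{DeTh13}. Recall that $\phi_t^\eps$ and $\phi^\eps(t)$ are real-valued, since the phase equations in \eqref{eq:syst-grenier}, \eqref{eq:linear-syst} and \eqref{eq:Y-syst} preserve real-valuedness, so that $e^{i\phi_t^\eps/\eps}$ and $e^{i\phi^\eps(t)/\eps}$ are unimodular. Writing, with the notation $\Psi^\eps=\phi_t^\eps-\phi^\eps(t)$ and $A^\eps=a_t^\eps-a^\eps(t)$ of Theorem~\ref{theo:errlocBKW},
$$a_t^\eps e^{i\phi_t^\eps/\eps}-a^\eps(t)e^{i\phi^\eps(t)/\eps}= A^\eps(t)\,e^{i\phi_t^\eps/\eps} + a^\eps(t)\bigl(e^{i\phi_t^\eps/\eps}-e^{i\phi^\eps(t)/\eps}\bigr),$$
and using $|e^{i\phi_t^\eps/\eps}|=1$ together with the elementary bound $\bigl|e^{i\alpha}-e^{i\beta}\bigr|\le|\alpha-\beta|$ for $\alpha,\beta\in\R$, one obtains
$$\left\|a_t^\eps e^{i\phi_t^\eps/\eps}-a^\eps(t)e^{i\phi^\eps(t)/\eps}\right\|_{L^2}\le \|A^\eps(t)\|_{L^2} + \frac{1}{\eps}\,\|a^\eps(t)\|_{L^\infty}\,\|\Psi^\eps(t)\|_{L^2}.$$

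The two error factors are controlled directly by Theorem~\ref{theo:errlocBKW}: since $\ell>d/2+5$ and $\rho(t)\ge0$ on $[0,c_0]$, we have $\H_{\rho(t)}^{\ell-3}\hookrightarrow L^2$ and $\H_{\rho(t)}^{\ell-4}\hookrightarrow L^2$, whence $\|A^\eps(t)\|_{L^2}\le\|A^\eps(t)\|_{\H_{\rho(t)}^{\ell-4}}\le Ct^2$ and $\|\Psi^\eps(t)\|_{L^2}\le\|\Psi^\eps(t)\|_{\H_{\rho(t)}^{\ell-3}}\le Ct^2$. It remains to bound $\|a^\eps(t)\|_{L^\infty}$ uniformly in $\eps\in(0,1]$ and $t\in[0,c_0]$. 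Since $(\phi^\eps,a^\eps)=\mathcal{S}_\eps^t(\phi_0,a_0)$ solves \eqref{eq:syst-grenier}, Proposition~\ref{prop:local}, applied with the $M$ and $c_0<M_0/M$ of Theorem~\ref{theo:errlocBKW}, yields via \eqref{eq:bound}
$$\sup_{0\le t\le c_0}\|a^\eps(t)\|_{\H_{\rho(t)}^{\ell}}\le\vvvert a^\eps\vvvert_{\ell,c_0}\le\sqrt{2}\,\mu;$$
then the embedding \eqref{eq:Linfty}, valid since $\ell>d/2$ with a constant independent of $\rho$, gives $\|a^\eps(t)\|_{L^\infty}\le C\mu$.

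Combining these estimates, I would conclude
$$\left\|a_t^\eps e^{i\phi_t^\eps/\eps}-a^\eps(t)e^{i\phi^\eps(t)/\eps}\right\|_{L^2}\le Ct^2+C\mu\,\frac{t^2}{\eps}\le C\,\frac{t^2}{\eps},\quad 0\le t\le c_0,$$
where the last step uses $\eps\le1$. The argument is essentially bookkeeping; the only point deserving attention is that every constant involved — the local error constant of Theorem~\ref{theo:errlocBKW}, the propagation bound \eqref{eq:bound}, and the embedding \eqref{eq:Linfty} — is genuinely uniform in $\eps\in(0,1]$, which holds because the constants in Lemma~\ref{lem:tame}, in \eqref{eq:Linfty} and in Proposition~\ref{prop:local} are all independent of the weight $\rho$ and of $\eps$. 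I therefore do not expect any real obstacle: the corollary merely transports the analytic estimate back to the level of the wave function.
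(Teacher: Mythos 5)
Your proof is correct and is essentially the paper's own argument: the same decomposition $a_t^\eps e^{i\phi_t^\eps/\eps}-a^\eps(t)e^{i\phi^\eps(t)/\eps}=A^\eps e^{i\phi_t^\eps/\eps}+a^\eps(t)\bigl(e^{i\phi_t^\eps/\eps}-e^{i\phi^\eps(t)/\eps}\bigr)$, the same use of unimodularity and the Lipschitz bound on $e^{i\cdot}$ for real arguments, and the same application of Theorem~\ref{theo:errlocBKW} together with a uniform $L^\infty$ bound on $a^\eps(t)$. You merely spell out (via Proposition~\ref{prop:local}, \eqref{eq:bound} and \eqref{eq:Linfty}) the $L^\infty$ control that the paper leaves implicit, and you explicitly record the real-valuedness of the phases, which the paper takes for granted; these are cosmetic, not methodological, differences.
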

\begin{proof}
  With the same notations as in Theorem \ref{theo:errlocBKW}, the Sobolev embedding of $\H^{\ell-4}_{\rho(t)}$ into $L^\infty$ ($\ell>d/2+5$) ensures 
 \begin{align*}
 \left\|a_t^\eps e^{i\phi_t^\eps/\eps} -a^\eps(t)
      e^{i\phi^\eps(t)/\eps} \right\|_{L^2} &\le \left\|a_t^\eps-a^\eps(t)\right\|_{L^2}+\left\|a^\eps(t)\(e^{i\phi_t^\eps/\eps} -
      e^{i\phi^\eps(t)/\eps}\)\right\|_{L^2}\\
      &\le \left\|A^\eps(t)\right\|_{L^2}+\frac{1}{\eps}\left\|a^\eps(t)\right\|_{L^\infty}\left\|\Psi^\eps(t)\right\|_{L^2}\le \frac{Ct^2}{\eps}
  \end{align*}
\end{proof}
This result will not be used in the sequel, but shows how a $1/\eps$
factor appears when going back to the wave function, in agreement with
the observations in \cite{BJM2}. The above computation also shows how
to infer the first point in Corollary~\ref{cor:wave} from
Theorem~\ref{theo:main}.

\section{Lady Windermere's fan}
\label{sec:lady}

Let $M_0>0$, $\ell>d/2+5$, and
$v_0=(\phi_0,a_0)\in\H_{M_0}^{\ell+1}\times\H_{M_0}^{\ell}$. For the
sake of conciseness, we use the following notations: for $t>0$,
$n\in\N$ and $\Delta t>0$,  
$$v^\eps(t)=(\phi^\eps(t),a^\eps(t))=\mathcal{S}_\eps^t v_0,\quad  v_n
^\eps=(\phi_n^\eps,a_n^\eps)=\(\mathcal{Z}_\eps^{\Delta t}\)^n v_0.$$  
For $\rho>0$ and $v=(\phi,a)\in
\H_{\rho}^{\ell+1}\times\H_{\rho}^{\ell}$, we also denote 
$$\|v\|_{\rho,\ell}=\|\phi\|_{\H_{\rho}^{\ell+1}}+\|a\|_{\H_{\rho}^{\ell}}.$$
According to Proposition \ref{prop:local}, if $M>0$ is sufficiently
large, $T<M_0/M$ and $\rho(t)=M_0-Mt$,
\eqref{eq:syst-grenier}-\eqref{eq:ci} has a unique solution $v^\eps\in
C([0,T],\H_\rho^{\ell+1}\times\H_\rho^{\ell})$, with 
$$\sup_{0\le t\le T}\|v^\eps(t)\|_{\rho(t),\ell}\le R,$$
where $R=2\|v_0\|_{M_0,\ell}$.

We recall the notation $t_n=n\Delta t$, and we set $\rho_n=\rho(t_n)$.
We now prove by induction on $n$ that
there exists $c_0>0$ such that if $\Delta t\in (0,c_0]$, for every
$n\ge 0$ such that $n\Delta t\le T$, we have 
\begin{align}
&\|v_n^\eps\|_{\rho_n,\ell-4}\le R+\delta,\label{eq:rec1}\\
&\|v_n^\eps-v^\eps(t_n)\|_{\rho_n,\ell-4}\le \gamma\Delta t,\label{eq:rec2}\\
&\|v_n^\eps\|_{\rho_n,\ell}\le R/2,\label{eq:rec3}
\end{align}
for some $\delta,\gamma>0$ that will be given
later. \eqref{eq:rec1}$_n$-\eqref{eq:rec2}$_n$-\eqref{eq:rec3}$_n$ obviously hold
 for $n=0$. Let $n>0$ such that $n\Delta t\le T$ and assume that
\eqref{eq:rec1}$_j$-\eqref{eq:rec2}$_j$-\eqref{eq:rec3}$_j$ hold
for all $j\in\{0,\cdots,n-1\}$. Then, for all $j\in\{0,\cdots,n-2\}$,
\eqref{eq:rec1}$_{j+1}$ yields 
\begin{equation}\label{eq:zvj}
\left\|\mathcal{Z}_\eps^{\Delta
    t}v_j^\eps\right\|_{\rho_{j+1},\ell-4}=\left\|v_{j+1}^\eps\right\|_{\rho_{j+1},\ell-4}\le
R+\delta.  
\end{equation}
On the other hand, for $j\in\{0,\cdots,n-2\}$, we also have
\begin{align}
\left\|\mathcal{S}_\eps^{\Delta t}v_j^\eps\right\|_{\rho_{j+1},\ell-4}
  & \le\left\|\mathcal{S}_\eps^{\Delta
    t}v_j^\eps-\mathcal{S}_\eps^{\Delta
    t}v^\eps(t_j)\right\|_{\rho_{j+1},\ell-4}+
\left\|v^\eps(t_{j+1})\right\|_{\rho_{j+1},\ell-4}\nonumber\\ 
&\le\left\|\mathcal{S}_\eps^{\Delta
  t}v_j^\eps-\mathcal{S}_\eps^{\Delta
  t}v^\eps(t_j)\right\|_{\rho_{j+1},\ell-4}+R.\nonumber 
\end{align}
From \eqref{eq:rec3}$_j$,
$\left\|v_j^\eps\right\|_{\rho_{j},\ell-4} \le R/2$, whereas
$\left\|v^\eps(t_j)\right\|_{\rho_{j},\ell-4} \le R$ by choice of
$R$. Thus, since $\ell-4>d/2+1$, Proposition \ref{prop:local} and
\eqref{eq:rec2}$_j$ imply (up to increasing $M$) 
\begin{equation*}
\left\|\mathcal{S}_\eps^{\Delta t}v_j^\eps-\mathcal{S}_\eps^{\Delta
    t}v^\eps(t_j)\right\|_{\rho_{j+1},\ell-4}\le K(R)\gamma\Delta t. 
\end{equation*}
Therefore, if $c_0>0$ is chosen sufficiently small such that
$K(R)\gamma c_0\le \delta$, we have  
\begin{equation}\label{eq:svj}
\left\|\mathcal{S}_\eps^{\Delta t}v_j^\eps\right\|_{\rho_{j+1},\ell-4}\le R+\delta,
\end{equation}
and \eqref{eq:zvj}, \eqref{eq:svj} and Proposition \ref{prop:local}
ensure that  for all $j\in\{0,\cdots,n-2\}$, 
\begin{equation}
\left\|\(\mathcal{S}_\eps^{\Delta t}\)^{n-1-j}\mathcal{Z}_\eps^{\Delta
    t}v_j^\eps-\(\mathcal{S}_\eps^{\Delta
    t}\)^{n-1-j}\mathcal{S}_\eps^{\Delta
    t}v_j^\eps\right\|_{\rho_{n},\ell-4}\le
K(R+\delta)\left\|\mathcal{Z}_\eps^{\Delta
    t}v_j^\eps-\mathcal{S}_\eps^{\Delta
    t}v_j^\eps\right\|_{\rho_{j+1},\ell-4}.\nonumber 
\end{equation}
Moreover, the last estimate also holds for $j=n-1$ if $K$ is replaced
by 1. According to \eqref{eq:rec3}$_j$ and Theorem
\ref{theo:errlocBKW}, we deduce that for all $j\in\{0,\cdots,n-1\}$, 
\begin{equation}
\left\|\(\mathcal{S}_\eps^{\Delta t}\)^{n-1-j}\mathcal{Z}_\eps^{\Delta
    t}v_j^\eps-\(\mathcal{S}_\eps^{\Delta
    t}\)^{n-1-j}\mathcal{S}_\eps^{\Delta
    t}v_j^\eps\right\|_{\rho_{n},\ell-4}\le
\max(1,K(R+\delta))C(R/2)\Delta t^2. 
\end{equation}
Piling up the last inequality for $j\in\{0,\cdots,n-1\}$, we conclude
\begin{align*}
\|v_n^\eps-v^\eps(t_n)\|_{\rho_n,\ell-4}
  &\le\sum_{j=0}^{n-1}\left\|\(\mathcal{S}_\eps^{\Delta
    t}\)^{n-1-j}\mathcal{Z}_\eps^{\Delta
    t}v_j^\eps-\(\mathcal{S}_\eps^{\Delta
    t}\)^{n-1-j}\mathcal{S}_\eps^{\Delta
    t}v_j^\eps\right\|_{\rho_{n},\ell-4}\\ 
 &\le n\max(1,K(R+\delta))C(R/2)\Delta t^2\\
 &\le\max(1,K(R+\delta))C(R/2)T\Delta t,
\end{align*}
which proves \eqref{eq:rec2}$_n$ with
$\gamma=\max(1,K(R+\delta))C(R/2)T$. Then, \eqref{eq:rec2}$_n$ yields 
\begin{equation}\label{eq:presque-rec1}
\|v_n^\eps\|_{\rho_n,\ell-4} \le
\|v_n^\eps-v^\eps(t_n)\|_{\rho_n,\ell-4}
+\|v^\eps(t_n)\|_{\rho_n,\ell-4} \le\gamma\Delta t+R. 
\end{equation}
Note that it does not prove \eqref{eq:rec1}$_n$ yet, because the
choice of $\delta=\gamma c_0$ may be incompatible with the previous
constraint $K(R)\gamma c_0\le \delta$. However, \eqref{eq:rec3}$_n$
follows from \eqref{eq:presque-rec1} and Corollary \ref{cor:borneZ},
once we have noticed that the proof of \eqref{eq:presque-rec1} also
works if $v_n^\eps=\mathcal{Z}_\eps^{\Delta t}v_{n-1}^\eps$ is
replaced by $\mathcal{Z}_\eps^{t}v_{n-1}^\eps$ (and $t_n$ by
$t_{n-1}+t$), for any $0\le t\le\Delta t$, so that 
\begin{align*}
&\mathcal{Z}_\eps^{t}\(\mathcal{Z}_\eps^{\Delta t}\)^{n-1} v_0 -\mathcal{S}_\eps^{t+(n-1)\Delta t} v_{0}\\
&\quad= \mathcal{Z}_\eps^{t}v_{n-1}^\eps -\mathcal{S}_\eps^{t}v_{n-1}^\eps
+\sum_{j=0}^{n-2}\left[\mathcal{S}_\eps^{t} \(\mathcal{S}_\eps^{\Delta t}\)^{n-2-j}\mathcal{Z}_\eps^{\Delta t}v_j^\eps-\mathcal{S}_\eps^{t}\(\mathcal{S}_\eps^{\Delta t}\)^{n-2-j}\mathcal{S}_\eps^{\Delta t}v_j^\eps\right].
\end{align*}
Then, \eqref{eq:rec1}$_n$ follows from \eqref{eq:rec3}$_n$, and any
positive value for $\delta$ is admissible.

\bibliographystyle{siam}
\bibliography{splitting}

\end{document}